\def\p{\vskip4truept \noindent}
\title{Existence of the uniform value in repeated games with a more informed controller\\
\small{November 29th 2012}}
\numberwithin{equation}{section}
\newtheorem{theoreme}{Theorem}[section]
\newtheorem{proposition}[theoreme]{Proposition}
\newtheorem{hypothese}{Hypotheses}[section]
\newtheorem{remarque}[theoreme]{Remark}
\newtheorem{lemme}[theoreme]{Lemma}
\newtheorem{corollaire}[theoreme]{Corollary}
\newtheorem{definition}[theoreme]{Definition}
\newtheorem{example}[theoreme]{Example}
\newcommand{\RR}{\ensuremath{\mathbb R}}
\newcommand{\PP}{\ensuremath{\mathbb P}}
\newcommand{\EE}{\ensuremath{\mathbb E}}
\newcommand{\NN}{\ensuremath{\mathbb N}}
\newcommand{\A}{\ensuremath{\mathcal A}}
\newcommand{\CL}{\ensuremath{\mathcal L}}
\newcommand{\T}{\ensuremath{\mathcal T}}
\newcommand{\F}{\ensuremath{\mathcal F}}
\newcommand{\HH}{\ensuremath{\mathcal H}}
\newcommand{\ind}{\ensuremath{\mathds 1}}
\newcommand{\G}{\ensuremath{\mathcal G}}
\newcommand{\De}{\Delta}
\newcommand{\DeI}{\Delta(\mathcal{I})}
\newcommand{\DeK}{\Delta(K)}
\newcommand{\DDDK}{\Delta_f(\Delta_f(\Delta(K)))}
\newcommand{\la}{\lambda}
\newcommand{\de}{\delta}
\newcommand{\si}{\sigma}
\newcommand{\ga}{\gamma}
\newcommand{\Ga}{\Gamma}
\newcommand{\limla}{\lim_{\lambda \to 0}}
\newcommand{\limn}{ \lim_{n\to\infty}}
\newcommand{\Tau}{\text{\Large $\tau$}}
\let\inf\relax \DeclareMathOperator*\inf{\vphantom{p}inf}
\let\min\relax \DeclareMathOperator*\min{\vphantom{p}min}
\def\abstract{\begin{center} \small\bf Abstract\end{center}\small}
\date{}
 \author{Fabien Gensbittel\thanks{TSE (GREMAQ, Universit\' e Toulouse 1 Capitole), France. \href{mailto:fabien.gensbittel@tse-fr.eu}{fabien.gensbittel@tse-fr.eu}}, Miquel Oliu-Barton\thanks{Equipe Combinatoire et Optimisation, Universit\'{e} Paris VI, France. \href{mailto:miquel.oliu.barton@normalesup.org}{miquel.oliu.barton@normalesup.org}}, Xavier Venel\thanks{School of Mathematical Sciences, Tel Aviv University, Tel Aviv 69978, Israel.  \href{mailto:xavier.venel@sip.univ-tlse1.fr}{xavier.venel@sip.univ-tlse1.fr}}}
\begin{document}
\maketitle

\begin{abstract}
We prove that in a general zero-sum repeated game where the first player is more informed than the second player and controls the evolution of information on the state, the uniform value exists.
This result extends previous results on Markov decision processes with partial observation (Rosenberg, Solan, Vieille 
\cite{rsv2002}), and repeated games with an informed controller (Renault \cite{R2012}). Our formal definition of a more informed player is more general than the inclusion of signals, allowing therefore for imperfect monitoring of actions. We construct an auxiliary stochastic game whose state space is the set of second order beliefs of player $2$ (beliefs about beliefs of player $1$ on the true state variable of the initial game) with perfect monitoring and we prove it has a value by using a result of Renault \cite{R2012}. A key element in this work is to prove that player $1$ can use strategies of the auxiliary game in the initial game in our general framework, which allows to deduce that the value of the auxiliary game is also the value of our initial repeated game by using classical arguments.
\end{abstract}

\vspace{6cm}
\noindent\textbf{Acknowledgements :} 
The authors gratefully acknowledge the support of the Agence Nationale de la Recherche, under grant ANR JEUDY, ANR-10-BLAN 0112.
The third author acknowledges the support of the the Israel Science Foundation under Grant $\sharp$1517/11.
Part of this work was done when the third author was Ph.D. student at the Universit\'{e} Toulouse 1 Capitole.

\newpage
\setcounter{tocdepth}{2}
\tableofcontents

\section{Introduction}

Zero-sum repeated games with incomplete information were introduced by Aumann and Maschler in 1966 \cite{aumasch} in order to study repeated interactions between two players having a different information. The authors also introduced a notion of value for these games usually called uniform value and proved its existence for games with incomplete information on one side. Mertens and Neyman \cite{mn} proved that the uniform value exists for finite stochastic games and several works were devoted since then to prove the existence of the uniform value for some subclasses of the general model of repeated games. 
Recently, Renault proved in \cite{R2012} that the uniform value exists in repeated games with an informed controller using an approach based on an existence result for dynamic programming problems (Renault, \cite{R2011}). The existence theorem in \cite{R2012} requires that the first player observes the state variable at each stage and controls and observes the evolution of the beliefs of the second player on the state variable. 
\p
In the present work, we prove that the uniform value exists in the class of repeated games with a more informed controller. Our existence result requires that the first player is more informed about the state variable than the second player and also that he controls the evolution of beliefs of the second player. A weaker version of our result was conjectured in the conclusion of \cite{R2012}, and it was suggested that the proof may be based on an auxiliary game whose state space would be the pair of beliefs of both players about the original state variable. We show that the analysis requires actually to introduce an auxiliary game whose state space is the set of second order beliefs of the less informed player and provide a set of weaker assumptions than those suggested in \cite{R2012}, allowing to deal with imperfect monitoring of actions.  
\p
 The paper is organized as follows:  In section \ref{model}, we describe the general model of repeated games and introduce three assumptions that formalize the notion of a more informed controller. In section \ref{app}, we check that several models previously studied in the literature  satisfy these three assumptions. Section \ref{discuss} is dedicated to a discussion of the assumptions and a precise study of their implications. In addition, we provide a second version of the theorem with stronger, but easier to check, assumptions. The last section \ref{proof} is dedicated to the proof of existence of the uniform value. We introduce there an auxiliary stochastic game with perfect monitoring on an auxiliary state variable which represents the beliefs of player $2$ about the beliefs of player $1$ about the state variable of the original game. We prove that this auxiliary game has a uniform value using the main theorem of Renault \cite{R2012} and that player $1$  can use optimal strategies in this auxiliary  stochastic game in order to play optimally in the original repeated game. Finally, we prove that player $2$ can also guarantee this value by playing by blocks, so that both games have a uniform value and these values are equal.
\section{Model}\label{model}
\subsection{General definitions and notation}
For any metric space $X$, let $\De(X)$ denote the set of Borel probability distributions on $X$. If $X$ is a finite set (endowed with the discrete metric) of cardinal $|X|$, then $\De(X)$ is precisely the $|X|$-dimensional simplex.
$\De_f(X)\subset \De(X)$ denotes the probability distributions supported on a finite subset of $X$ and  $\de_x$ denotes the Dirac measure on $x\in X$.

A zero-sum repeated game is described by a $8$-tuple $(K,I,J,g,C,D,\pi,q)$, where  $K$ is the state space, $I$ and $J$ are the action sets for player $1$ and $2$ respectively, $g$ is a payoff function $g:K\times I\times J \to [0,1]$, and $C$ 
and $D$ are the signal sets for player $1$ and $2$ respectively. $\pi \in \Delta_f(K\times \NN  \times \NN)$ denotes the initial probability and $q:K \times I \times J \to \De(K\times C\times D)$ denotes the transition function.

The game is played as follows: At the beginning of the game, the triple $(k_1,c_1,d_1)$ is chosen according to the initial probability distribution $\pi\in \De_f(K\times \NN \times \NN)$. For each stage $m \geq 1$, player $1$ observes the signal
$c_m$ and player $2$ observes the signal $d_m $. Then both players choose 
%simultaneously and independently 
actions $(i_m,j_m)\in I\times J$ based on their own past actions and on the sequence of signals they observed
(i.e. we assume perfect recall). 
Given the state $k_m$ and the actions $(i_m,j_m)$, a new triple $(k_{m+1},c_{m+1},d_{m+1}) \in K\times C\times D$ is chosen according to the probability distribution $q(k_{m},i_m,j_m)$. The payoff for stage $m$ of player 1 is $g(k_m,i_m,j_m)$ and
the game proceeds to stage $m+1$. The stage payoffs are not directly observed by the players and  cannot be deduced, in general, from their observations. The sets of initial signals can be any finite subset of $\NN$. This generalization is for
technical reasons only. Indeed, it will 
very convenient in the sequel to consider this possibly larger set of initial signals in order to have a simple way to deal with the recursive structure of the game.

The information held by player $1$  before his play at stage $m$, called player $1$'s  private history, is given by 
\[ h^I_m \triangleq (c_1,i_1,\dots,c_{m-1},i_{m-1},c_m) \in \NN \times (I\times C)^{m-1}. \]
Similarly, the information held by player $2$ is represented by  
\[ h^{II}_m \triangleq (d_1,j_1,\dots,d_{m-1},j_{m-1},d_m) \in \NN \times (J \times D)^{m-1}.\]
Let $\HH^{I}$ (resp. $\HH^{II}$) denote the set of all finite private histories for player $1$ (resp. of all private histories for player  $2$). We assume the sets $K$, $I$, $J$, $C$ and $D$ are all finite and that the description of the model 
is common knowledge.

Instead of $\NN$, the sets of initial signals will often be denoted by $C'$ and $D'$ where $C'$ and $D'$ are finite subsets of $\NN$. We will also write abusively that $\pi \in \Delta(K\times C'\times D')$. The initial signals will still be
denoted by $(c_1,d_1)$. Reciprocally, given finite sets $C'$ and $D'$, any $\pi \in \Delta(K\times C'\times D')$ can be seen as an element of $\Delta_f(K\times \NN \times \NN)$ using some enumerations of $C'$ and $D'$. The main advantage is
that any couple of finite private histories can be embedded in $\NN\times \NN$ via some enumerations.
This advantage will become clear in the proof. 
% when the recursive structure of the game will be considered. (REDITE UN PEU PLUS HAUT, (miq))

\paragraph{Strategies} 
A behavior strategy for player $1$ is a map from private histories $\HH^{I}$  to probabilities  over $I$ . The set of 
behavior strategies of player $1$ is denoted by $\Sigma$. Every strategy $\si\in \Sigma$ corresponds to a 
sequence $\{\si_m\}_{m\geq 1}$, where $\si_m$ is defined on the set of histories up to stage $m$. That is, 
\[ \si_m:\NN \times (I \times C)^{m-1} \to \De(I). \] 
Similarly, a behavior strategy $\tau$ for player $2$ is a map from private histories $\HH^{II}$ to probability distributions over $J$. The set of behavior strategies of player $2$ is denoted by 
$\T$. Any  $\tau$ corresponds to a 
sequence $\{\tau_m\}_{m\geq 1}$, with 
\[ \tau_m: \NN \times (J\times D)^{m-1} \to \De(J). \]

The initial distribution $\pi$, the transition function $q$ and a behavior strategy profile $(\si,\tau)\in \Sigma\times \T$ induce a unique probability distribution over the set of plays 
$K \times \NN \times \NN \times (K\times C \times D \times I \times J)^{\infty}$, denoted by $\PP^{\pi}_{\si\tau}$. Let $\EE^\pi_{\si\tau}=\EE_{\PP_{\si\tau}^\pi}$ denote the expectation with respect to the probability $\PP^\pi_{\si\tau}$.

\paragraph{Evaluations of the payoff} A second component of the model is the way in which the total payoff of player $1$ is evaluated, in terms of the sequence of stage payoffs $\{g(k_m,i_m,j_m)\}_{m\geq 1}$. The two classical evaluations 
correspond to the $n$-stage  game and the $\la$-discounted game. In the former, the payoff function is the expected Ces\`aro mean of the stage payoffs of the $n$ first stages, i.e.
\[\ga_n(\pi,\si,\tau)=\EE^\pi_{\sigma\tau}\big[\frac{1}{n}\sum\nolimits_{m=1}^n g(k_m,i_m,j_m)\big].\]
 In the latter, the payoff is taken as the expected Abel sum, with respect to the discount factor $0 <\la  \leq 1$, i.e.
\[ \ga_\la(\pi,\si,\tau)=\EE^\pi_{\sigma\tau}\big[\la \sum\nolimits_{m\geq 1}(1-\la)^{m-1} g(k_m,i_m,j_m)\big].\] 
More generally, one may consider any \textit{compact evaluation}. That is, for any $\theta \in\Delta(\NN^*)$, let 
\begin{eqnarray}\label{ev}
\ga_{\theta}(\pi, \si, \tau)&=&\EE^\pi_{\sigma\tau}\big[\sum\nolimits_{m \geq 1}\theta_m  g(k_m,i_m,j_m)\big].
\end{eqnarray}
Denote by $\Gamma_\theta(\pi)$ the $8$-tuple defined above together with the $\theta$-evaluation.

\paragraph{The value function}
For any $\pi\in \Delta_f(K \times \NN \times \NN)$ and any $\theta \in \Delta(\NN^*)$, $\Gamma_{\theta}(\pi)$ is known to have a value, denoted by $v_\theta(\pi)$. It satisfies 
\[ v_\theta(\pi)=\sup_{\sigma \in \Sigma} \inf_{\tau \in \T} \ga_{\theta}(\pi, \si, \tau)=\inf_{\tau \in \T} \sup_{\sigma \in \Sigma} \ga_{\theta}(\pi, \si, \tau).\]

\begin{remarque}
These general evaluations will be used in section \ref{proof}, and we will only need to consider  probabilities with finite support (i.e. $\theta \in \Delta_f(\NN^*)$).
\end{remarque}

Let $\si\in \Sigma$ be a behavior strategy and let $h\in \HH^{I}$ be some finite private history of player $1$. We denote by $\si(h)$ the behavior strategy of player $1$ after the history $h$. Equivalently, $\si(h)$ is the restriction of the map
$\si$ to the subset of histories beginning with $h$. In particular, given some strategy profile $(\si,\tau)\in \Sigma\times \T$ and two signals $(c,d)\in C'\times D'$, consider the profile $(\si(c),\tau(d))$. It may be interpreted as a strategy
profile in a game in which the players have no initial signals. More formally, for any $p\in \De(K)$, we will use the following notation:
\[ \gamma_{\theta}(k,\sigma(c),\tau(d))\triangleq \gamma_{\theta}(\delta_{(k,c,d)},\sigma,\tau), \ \text{and}\quad  \gamma_{\theta}(p,\sigma(c),\tau(d))\triangleq \gamma_{\theta}(p\otimes \delta_{(c,d)},\sigma,\tau).\]
With this notation, the payoff can be written as
\[ \gamma_{\theta}(\pi,\sigma,\tau) = \EE_\pi[ \gamma_{\theta}(k,\sigma(c),\tau(d))]=\sum_{(k,c,d)\in K\times C'\times D'}
\gamma_{\theta}(k,\sigma(c),\tau(d)) \pi(k,c,d). \]

Alternatively, one can consider the game as having \textit{per se} infinitely many stages.

\paragraph{Uniform value} The infinitely repeated game is denoted by $\Gamma_\infty(\pi)$. Let us present here some important definitions relative to the game $\Ga_\infty(\pi)$. Its value will be called the  \emph{uniform value}
and denoted by $v_\infty(\pi)$.

\begin{definition}
Let $v$ be a real number,
\begin{itemize}
\item Player $1$ can guarantee $v$ in $\Gamma_{\infty}(\pi)$ if for any $\epsilon >0$ there exists a strategy $\sigma\in \Sigma$ of player $1$ and an integer $N\in \NN$, such that
\[ \forall n\geq N, \ \forall \tau \in \Tau, \ \gamma_n(\pi,\sigma,\tau) \geq v- \epsilon. \]
We say that such a strategy $\sigma$ guarantees $v-\epsilon$ in $\Gamma_\infty(\pi)$ and define
\[
\underline{v}_\infty(\pi) \triangleq \sup \{v\in\RR\ | \ \text{player }1\text{ can garantee } v\}.\]
\item Player $2$ can guarantee $v$ in $\Gamma_\infty(\pi)$ if for any $\epsilon >0$ there exists a strategy $\tau \in \Tau$ of player $2$ and an integer $N\in \NN$, such that
\[ \forall n\geq N, \ \forall \sigma \in \Sigma, \  \gamma_n(\pi,\sigma,\tau) \leq v+ \epsilon.\]
We say that such a strategy $\tau$ guarantees $v+\epsilon$ in $\Gamma_\infty(\pi)$ and define
\[\overline{v}_\infty(\pi) \triangleq \inf \{v\in\RR\ | \ \text{player }2\text{ can garantee } v\}.\]
\item If $\underline{v}_\infty(\pi)=\overline{v}_\infty(\pi)$ the uniform value exists and we denote by $v_\infty(\pi)$ the common value.
\end{itemize}
\end{definition}

The existence of a uniform value $v_\infty$ is stronger than the existence of a limit value (or asymptotic value), in the sense that it implies (see e.g. Neyman and Sorin \cite[Theorem 1]{NS10} for more general evaluations)
\[ \limla v_{\la} =\limn v_n=v_\infty. \]

\subsection{Model with a more informed controller}\label{miss}
We will consider a particular class of the general model presented above, which generalizes both the class of repeated games considered by Renault \cite{R2012} and the model of Partially Observable Markov Decision Processes (see section 3.1). As usual in games with incomplete information, we call belief of player $1$ at stage $m$ about some random variable $\xi$ the conditional law of $\xi$ given the information
held by player $1$ at stage $m$. In the sequel, the first order beliefs of a player denote beliefs about the state variable $k$, and second order beliefs of a player denote beliefs about the first order beliefs of his opponent. 
\p
We assume the following three hypotheses at every stage $m$ of the game:
\begin{itemize}
\item[$(a1)$] Player $1$'s first order belief is more accurate than player $2$'s first order belief.
\item[$(a2)$] Player $1$ can compute the second order beliefs of player $2$.
\item[$(a3)$] Player $1$ controls the evolution of second order beliefs of player $2$.
\end{itemize}
The main result of this paper is to establish the existence of the uniform value under these assumptions. Let the formal transcription
of $(a1)-(a3)$ defined below be denoted by $(A1)-(A3)$.

\begin{theoreme}\label{main1}
Let $\Ga$ be a repeated game with a more informed controller, i.e. such that assumptions $(A1)$, $(A2)$, $(A3)$ hold. Then the uniform value exists.
\end{theoreme}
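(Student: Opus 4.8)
The plan is to reduce the infinitely repeated game $\Gamma_\infty(\pi)$ to an auxiliary stochastic game with perfect monitoring of its state, apply Renault's existence theorem \cite{R2012} there, and then transport $\varepsilon$-optimal strategies between the two games in both directions. For the auxiliary construction, write for each stage $m$ $\ p_m\in\DeK$ for player $1$'s first order belief, i.e. the conditional law of $k_m$ given the private history $h^I_m$, and $\mu_m\in\DDK$ for player $2$'s second order belief, i.e. the conditional law of $p_m$ given $h^{II}_m$. I would take $\mu_m$ as the state variable of an auxiliary game $\Gat$. By definition $\mu_m$ is measurable with respect to $h^{II}_m$, and by $(A2)$ player $1$ can also compute $\mu_m$ from $h^I_m$, so $\mu_m$ is observed by both players; by $(A3)$ its law of evolution is driven by player $1$ alone. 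Taking the hidden variable observed by player $1$ to be $p_m$ and the controlled public belief to be $\mu_m$ places $\Gat$ exactly in the class of games with an informed controller of \cite{R2012}, with belief-state space $\DDK$ and perfect monitoring of the state. I would then specify the auxiliary action set of player $1$ so as to encode, besides the move in the original game, the splitting of $\mu_m$ into $\mu_{m+1}$ it induces, together with an auxiliary transition reproducing the genuine dynamics of $\mu_m$ granted by $(A1)$--$(A3)$, and an auxiliary payoff obtained by averaging $g$ over $k$ along the beliefs encoded in the auxiliary state and move.

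Next I would check that $\Gat$ meets the hypotheses of the main theorem of \cite{R2012}: player $1$ observes and controls the state, player $2$ receives only public information about it, and the state space $\DDK$ is handled through the dynamic-programming result of \cite{R2011}. This yields that $\Gat_\infty$ has a uniform value, say $\widetilde{v}_\infty$, together with $\varepsilon$-optimal strategies and a common threshold $N$ valid for all $n\ge N$. The generalization of the initial-signal sets to arbitrary finite subsets of $\NN$, allowed in the model, is precisely what lets me treat any realized $\mu_m$ as the initial distribution of a subgame and thereby exploit the recursive structure of $\Gat$.

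\textbf{The main obstacle is the transport of player $1$'s strategies.} A strategy in $\Gat$ prescribes a (mixed) auxiliary move as a function of the sequence of states $\mu_1,\dots,\mu_m$, whereas in $\Gamma_\infty(\pi)$ player $1$ may only condition on the genuine private history $h^I_m$. I must therefore build $\sigma\in\Sigma$ that reproduces, play by play, the auxiliary process of second order beliefs and the associated payoffs. The point, guaranteed by $(A1)$--$(A3)$, is that the more informed player $1$ knows both $p_m$ and $\mu_m$ from $h^I_m$ and can realize the splitting of $\mu_{m+1}$ prescribed by the auxiliary move through an adequate choice of $i_m$. The technical content is a disintegration argument showing that, against \emph{any} $\tau\in\T$, the law $\PP^\pi_{\si\tau}$ pushed forward onto the state process $(\mu_m)_m$ and the stage payoffs coincides with the law generated by the auxiliary strategy; this gives $\ga_n(\pi,\sigma,\tau)\ge\widetilde{v}_\infty-\varepsilon$ for all $n\ge N$ and all $\tau$, hence $\underline{v}_\infty(\pi)\ge\widetilde{v}_\infty$.

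For the reverse inequality I would let player $2$ defend $\widetilde{v}_\infty$ by blocks: since $\widetilde{v}_\infty$ is also the limit of the finitely repeated values of $\Gat$, player $2$ plays an $\varepsilon$-optimal finite-horizon auxiliary strategy on successive blocks, recomputing $\mu_m$ at each block start, which is legitimate because $\mu_m$ is measurable with respect to $h^{II}_m$. A standard concatenation estimate then gives $\ga_n(\pi,\sigma,\tau)\le\widetilde{v}_\infty+\varepsilon$ for all large $n$ and all $\sigma$, so $\overline{v}_\infty(\pi)\le\widetilde{v}_\infty$. Together with the trivial inequality $\underline{v}_\infty(\pi)\le\overline{v}_\infty(\pi)$ this forces $\underline{v}_\infty(\pi)=\overline{v}_\infty(\pi)=\widetilde{v}_\infty$, so the uniform value of $\Gamma_\infty(\pi)$ exists and equals that of the auxiliary game.
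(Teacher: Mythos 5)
Your plan follows the same route as the paper: an auxiliary stochastic game whose state is player $2$'s second order belief $y_m\in\De_f(\De(K))$, Renault's informed-controller theorem applied to it, transport of player $1$'s Markovian auxiliary strategies back into $\Gamma_\infty(\pi)$, and a block strategy for player $2$. However, two steps that you assert in passing are precisely where the difficulty lies, and as stated they do not go through. First, the auxiliary game is \emph{not} ``exactly in the class'' of \cite{R2012}: the state space $\De_f(\De(K))$ is only relatively compact and player $1$'s auxiliary action set (measurable maps from $\De(K)$ to $\De(I)$) is not compact, so the topological hypotheses of Renault's theorem fail. The paper must replace them by weaker geometric hypotheses (convexity plus one-sided semicontinuity so that Sion's theorem still applies), verify the splitting/concavity condition for the Choquet order, and, most substantially, verify the invariance hypothesis $(H5)$ by exhibiting a stable family of $1$-Lipschitz functions. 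That last point forces one to prove that the canonical value $\hat{v}_\theta$ is $1$-Lipschitz for the \emph{Wasserstein} metric on $\De(\De(K))$; this is not the routine argument, because two nearby states of $\De_f(\De(K))$ may have disjoint supports, so one cannot reuse the same strategy and must instead build a coupled strategy via a Blackwell--Dubins representation (Lemma \ref{canlip}).

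Second, your claim that by $(A2)$ player $1$ ``can compute $\mu_m$ from $h^I_m$'' for every $m$ is exactly what has to be proved, and it is false for arbitrary strategies of player $1$: assumptions $(A2b)$ and $(A3)$ are stated only for \emph{reduced} strategies and only for the passage from stage $1$ to stage $2$. Example \ref{reducednecessity} shows that if player $1$ conditions on payoff-irrelevant parts of his signal he loses the ability to track $y_2$, so $(A2)$ simply does not hold along such play. The missing ingredient is the induction of Lemma \ref{markovstrat}: one must show that the updating map for $y_{n+1}$ and the law $\eta_{n+1}$ depend on the stage-$n$ distribution only through its image under $\Phi$, so that a strategy Markovian in $(x_n,y_n)$ remains implementable at every stage. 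Without restricting player $1$ to reduced strategies and running this induction, the transported strategy $\sigma$ is not well defined, and your ``disintegration argument'' has nothing to disintegrate. Relatedly, for the payoff comparison against an arbitrary $\tau$, the stage payoffs depend on $h^{II}_m$ and not only on $(\mu_m)_m$, so the pushforward of $\PP^\pi_{\si\tau}$ onto the state process does not by itself identify the payoff with an auxiliary-game payoff; the paper handles this by a backward induction replacing $\tau_m(h^{II}_m)$ with its conditional expectation given $y_m$, using $(A3)$ to check that the law of $(y_{m+1},\dots,y_n)$ is unchanged. Your player-$2$ block argument and the final squeeze are fine and match the paper.
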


\begin{remarque} It was already pointed out in the literature (see e.g. Mertens \cite{mertensicm}) that in games with a more informed player, the analysis of beliefs can be restricted to second order beliefs of the less informed player. In this work, the definition of more informed is slightly more general than the inclusion of signals and a similar reduction is made formally in Lemma \ref{projection}. 
\end{remarque}

\subsection{Formal assumptions}

Let us present here a rigorous transcription of the informal assumptions $(a1)-(a3)$. In the next section, we will present some of the models which satisfy our three assumptions and to which, consequently, Theorem \ref{main1} applies.
\p
Let us start with some notations: 
\p
Given  some probability distribution $\mu \in \De_f(X\times Y)$  over
a product, we denote by 
\[\mu(x)\triangleq \sum_{y\in Y} \mu(x,y).\]
For any random variable $\xi$ defined on a probability space $(\Omega,\A,\PP)$ and $\F$ a sub $\sigma$-algebra of $\A$, let $\CL_{\PP}(\xi\mid \F)$ denote the conditional distribution of $\xi$ given $\F$,
which is seen as a $\F$-measurable random variable\footnote{All random variables appearing here take only finitely many values so that the definition of conditional laws does not require any additional care
about measurability.} and let $\CL_{\PP}(\xi)$ denote the distribution of $\xi$.
\p
In the sequel, both functions $g$ and $q$ are linearly extended to $\De(K) \times I \times J$.
\p
Assumption $(a1)$ can now be formalized as follows:
\begin{itemize}
\item[$(A1)$] $\forall n\geq 1, \ \forall (\si,\tau)\in \Sigma \times \T, \quad \CL_{\PP^\pi_{\si\tau}}(k_{n} \mid h_n^I,h_n^{II}) = \CL_{\PP^\pi_{\si\tau}}(k_n \mid h_n^I).$
\end{itemize}
In words, at every stage and given any strategy profile, player $2$'s information does not contain any information about the state variable that is not already contained in player $1$'s information. 
Assumption $(A1)$ is equivalent to the conditional independence of $k_{n}$ and $h^{II}_n$, given $h^I_n$, under the probability $\PP_{\si\tau}^\pi$.

\p
For $n=1$, this equation does not depend on $\si$ and $\tau$ and it can be reformulated as
\begin{itemize}
\item[$(A1a)$] $\pi(c)\pi(k,c,d)=\pi(k,c)\pi(c,d),\ \forall (k,c,d)\in K \times C'\times D'$.
\end{itemize}

In order to model the players' information about the state variable at stage $n$, we need to define three variables $x_n$, $y_n$ and $\eta_n$. Before choosing their first action, the players receive signals 
$(c_1,d_1)\in C'\times D'$. The (random) variable 
\[ x_1\triangleq  \CL_\pi (k_1\mid c_1) \in \De(K)\]
represents the \emph{first order beliefs of player $1$} about the initial state. Let $x_1(c_1)\in \De(K)$ denote its realization,  i.e. the beliefs of player $1$ once he has received the signal $c_1\in C'$. Thus, $x_1(c_1)=\CL_\pi(k_1|c_1)$ and each signal $c_1 \in C'$ occurs with probability $\pi(c_1)$, so that $\CL_\pi(x_1)=\sum_{c_1\in C'}\pi(c_1) \de_{x_1(c_1)}$. 
Similarly, define the \emph{second order beliefs of player $2$}, i.e. beliefs about player $1$'s beliefs about the initial state 
\[ y_1\triangleq  \CL_\pi(x_1 | d_1 ) \in \Delta_f(\Delta(K).\] 
With probability $\pi(d_1)$, player $2$'s beliefs about player $1$'s beliefs (about the state variable) are distributed as follows:  
\[ y_1(d_1)=\sum_{c_1\in C'} \pi(c_1|d_1)\de_{x_1(c_1)}\in \De_f(\De(K)),\]
with a slight abuse of notations since we write $\pi(c_1|d_1)$ instead of $\pi(c_1=c|d_1)$ with a sum over $c \in C'$. 
Finally, let $\eta_1$ be the distribution of the second order  beliefs of player $2$
\[ \eta_1 \triangleq \CL_\pi(y_1)= \sum_{d_1 \in D'} \pi(d_1) \de_{y(d_1)}\in \De_f(\De_f(\De(K))).\]
Notice that the Dirac measures involved in the definition of $\CL_\pi(x_1)$ or of $\CL_\pi(y_1)$ refer to different spaces: the former refers to $\De(K)$, the latter to $\De_f(\De(K))$. 
\p
More generally, for some fixed strategy profile $(\si,\tau)\in \Sigma\times \T$, let us denote the first order beliefs of player $1$ at stage $n$ by $x_n\in \De(K)$, the second order beliefs of player $2$ at stage 
$n$ by $y_n\in \De_f(\De(K))$, and the distribution of $y_n$ by $\eta_n$.

\begin{definition} Put $x_{n} \triangleq  \CL_{\PP_{\si\tau}^\pi} (k_{n}\mid h^I_n)$, $y_n\triangleq  \CL_{\PP^\pi_{\sigma\tau}}(x_n | h_{n}^{II})$, and $\eta_n\triangleq  \CL_{\PP^\pi_{\sigma\tau}}(y_n)$. %\in \De_f(\De_f(\De(K)))
\end{definition}

Let us illustrate these definitions through the following example. 
\begin{example} \label{varaux}
Let $K=\{k_1,k_2\}$ be set of states space, $U=\{u_1,u_2\}$ a set of public signals %(both players can receive them)
and % let
$S=\{s_1, s_2,s_3 \}$ a set of private signals for player $1$. 
Using the notations above, let $C=U\times S$ (resp. $D=U$) be the set of signals for player $2$ (resp. $2$). 
We consider $\pi \in \De(K \times S\times U)$ defined by 
\[ \begin{matrix}  
 & \begin{matrix} u_1 & u_2  \end{matrix}& &\begin{matrix} u_1 & u_2  \end{matrix} \vspace{.2cm} \\
\begin{matrix} s_1 \\ s_2 \\ s_3 \end{matrix}  &\begin{pmatrix} \frac{8}{24} & 0 \\ \frac{1}{24} & \frac{3}{24} \\ 0 & 0 \\ \end{pmatrix} 
 & \begin{matrix} s_1 \\ s_2 \\ s_3 \end{matrix} 
 & \begin{pmatrix} 0 & 0 \\ \frac{1}{24} & \frac{3}{24} \\ \frac{2}{24} & \frac{6}{24} \\ \end{pmatrix} \vspace{.2cm} \\ 
 & k_1 & & k_2 
 \end{matrix}. \]
It is more convenient here to use $K\times S \times U$ but $\pi$ can be understood as a probability on $K \times C\times D$. To simplify notations, let us identify $\Delta(K)$ with $[0,1]$ with the convention 
that $p \in \Delta(K)$ is identified with $p(k_1)$. If player $1$ receives signal $(s_1, u_1)$, then  $x_1 = 1$. If he receives $(s_2, u_1)$ or $(s_2, u_2)$, then $x_1=\frac{1}{2}$. Finally, if he receives $(s_3,u_1)$ 
or $(s_3, u_2)$, then $x_1=0$. The value of $x_1$ depends only on his private signal.
\p
We now compute the second order beliefs of player $2$. If player $2$ receives $u_1$ then his beliefs about the private signal of player $1$ are $\frac{8}{24} \delta_{s_1}+ \frac{2}{24} \delta_{s_2} + \frac{2}{24} \delta_{s_3}$, so that
\[ y_1(u_1) =\frac{8}{24} \delta_{1}+ \frac{2}{24} \delta_{\frac{1}{2}} + \frac{2}{24} \delta_{0}.\]
If player $2$ receives $u_2$, then we obtain 
\[ y_1(u_2) = \frac{6}{24} \delta_{\frac{1}{2}} + \frac{6}{24} \delta_{0}.\] 
To conclude, player $2$ receives each signal with probability $\frac{1}{2}$, so that $\eta_1$ is equal to
\[ \eta_1=\frac{1}{2} \delta_{\frac{8}{24} \delta_{1}+ \frac{2}{24} \delta_{\frac{1}{2}} + \frac{2}{24} \delta_{0}} + \frac{1}{2} \delta_{\frac{6}{24} \delta_{\frac{1}{2}} + \frac{6}{24} \delta_{0}}.\]
\end{example}

Assumption $(a2)$ will be split in two parts $(A2a)$ and $(A2b)$. At first, we assume that player $1$ is able to compute the variable $y_1$, which is a constraint on the initial probability $\pi$ only.

\begin{itemize}
\item[$(A2a)$] There exists a map $f_1=f_1^\pi: C'\to \De(\De(K))$ such that $y_1= f_1(c_1), \ \pi \text{-almost surely}$.
\end{itemize}

Assuming $(A2a)$, we can introduce a special class of strategies for player $1$ which will be needed for the second part of the formal assumption.
\begin{definition}
If $\pi \in \Delta_f(K \times \NN \times \NN)$ fulfills $(A2a)$, a strategy $\si \in \Sigma$ is called \emph{a reduced strategy} if it depends on the initial signal $c_1$ in $C'$ only through $(x_1,y_1)$. Let $\Sigma'(\pi) \subset \Sigma$ denote the subset of reduced strategies.
\end{definition}

The second part of $(A2)$ requires that when player $1$ is using a reduced strategy, the variable $y_2$ has to be $h^{I}_2$-measurable. Formally: 
\begin{itemize}
\item[$(A2b)$]
$\forall \pi \in \Delta_f(K \times \NN \times \NN)$ satisfying $(A2a)$,
$\forall \si \in \Sigma'(\pi),\forall \tau  \in \T, \; \exists f_2=f_2^{\pi,\si,\tau}: \NN \times I \times C \to \De(\De(K))$  such that
$y_2=f_2(c_1,i_1,c_2),\; \PP^{\pi}_{\si,\tau}$-almost surely.
\end{itemize}

The introduction of reduced strategies for player $1$ is necessary in order to exclude non relevant correlations
between players (see example \ref{reducednecessity} in section \ref{discuss}). It will be shown in Lemma 
\ref{projection} and in the proof of the main Theorem that there is no loss in restricting player $1$ to reduced strategies.
%in our model

In order to state the last assumption, we reduce the set of initial probabilities.
\begin{definition}
Let $\De^*_f(K\times \NN\times \NN)$ be the set of probability distributions satisfying $(A1a)$ and $(A2a)$.
\end{definition}

Assumption $(a3)$ can now be formalized as
\begin{itemize}
\item[$(A3)$] $\forall \pi \in \De^*_f(K\times \NN\times \NN),\  \forall \si\in \Sigma'(\pi), \  \eta_2 \text{ is independent of }\tau \in \T$.
\end{itemize}

\begin{remarque}
Assumptions $(A1,A2)$ imply that the properties $(A1a)$ and $(A2a)$ of the initial probability $\pi$ are preserved
by the transition when player $1$ plays reduced strategies.  Precisely, for all $(\si,\tau)$ with $\si$ reduced,
the law of $(k_2,h^I_2,h^{II}_2)$ under $\PP_{\si\tau}^\pi$, seen as an element of $\De_f(K\times \NN \times \NN)$, belongs to the set $ \De_f^*(K\times \NN \times \NN)$. We will prove in section \ref{discuss} that even if the
two last assumptions $(A2b)$ and $(A3)$ are stated in terms of $y_2$ and $\eta_2$, it is possible to extend these properties by induction for $y_n$ and $\eta_n$ for appropriate strategies. Thus the formal assumptions are coherent with the informal
assumptions. In particular, player $1$ can compute the auxiliary variables $(x_2,y_2,\eta_2)$ without knowing the strategy of player $2$ and therefore play again a reduced strategy at the second stage (i.e. which 
depends  only on $(x_2,y_2)$).
\end{remarque}

\section{Applications}\label{app}

We present in this section several models which satisfy our assumptions. 

\subsection{Partially Observable Markov Decision Processes.}
A POMDP is a one-player game, given by a tuple $(K, I,C,g,q,\pi)$, where $K$ is the state space, $I$ is the action set, $C$ is the signals set, $g:K \times  I \to [0,1]$ is the payoff function, $q:K \times  I\to \De(K\times C)$ 
is the transition function and $\pi$ is an initial distribution on $K \times \NN$. In the finite framework, the existence of the uniform value has been proven by Rosenberg, Solan and Vieille \cite{rsv2002} and it was extended by
Renault \cite{R2011} to arbitrary set of actions and signals with the additional assumption that all the probabilities appearing in the transition or in the definition of strategies have finite support. We will only consider here the finite case.

Formally, a POMDP can be seen  as a repeated game in which player $2$ is dummy (i.e. his action set $J$ is a singleton). Since player $2$ has only one action, his information plays no role here. The assumptions $(A1)-(A3)$ hold obviously.

\subsection{Repeated game with a perfectly informed controller.}

The model of a repeated game with an informed controller introduced by Renault \cite{R2012} fulfils our assumptions. In this model, player $1$ is perfectly informed of the state and of the signal of player $2$, in the sense
that he can deduce the true state variable and the signal of player $2$ from his signals. Moreover, the transition $q$ is such that player $2$ has no influence on the joint distribution of the pair made by the state variable and his signal.

In \cite{R2012},  the sets of initial signals are $C'=C$ and $D'=D$. Formally, the first assumption (i.e. that player $1$ is
perfectly informed of the state and of the signal of player $2$) is given by
\begin{itemize}
\item[$(HA')$] There exists two mappings $\hat{k}:C \rightarrow K$ and $\hat{d}:C \rightarrow D$ such that, if $E$ denotes $\{(k,c,d)\in K \times C \times D,\, \hat{k}(c)=k,\, \hat{d}(c)=d \}$, then
\[ \pi(E)=1, \text{  and  } q(k,i,j)(E)=1, \ \forall (k,i,j)\in K\times I \times J. \]
\end{itemize}
The second assumption is formalized by
\begin{itemize}
\item[$(HB')$] Player $1$ controls the transition in the sense that the marginal of the transition $q$ on $K \times D$ does not depend on player $2$'s action. 
For $k\in K$, $i\in I$, $j \in J$, we denote by 
$\overline{q}(k,i)$ the marginal of $q(k,i,j)$ on $K\times D$.
\end{itemize}
Let us check that this model satisfies our assumptions.  Assuming that the initial distribution $\pi \in \Delta(K\times C \times D)$ fulfils assumption $(HA')$, we have
\[ y_1(d_1)=\sum_{c_1 \in C} \pi(c_1| d_1) \delta_{\CL_{\pi}(k_1 |c_1,d_1)}=\sum_{c_1 \in C} \pi(c_1 | d_1) \delta_{\delta_{\hat{k}(c_1)}}= \sum_{k_1 \in K} \pi(k_1|d_1) \delta_{\delta_{k_1}}.\]
We deduce that $\pi$ can be seen as an element of $\Delta^*_f(K \times \NN \times \NN)$. Formally, we have to verify our assumptions, 
starting from any initial distribution in $\Delta^*_f(K \times \NN \times \NN)$.
From now on, initial signals $(c_1,d_1)$ belong to arbitrary finite subsets of $\NN$ denoted by $C',D'$
as in the previous section\footnote{One may easily reduce the analysis to a smaller set of initial probabilities, but we chose 
to keep this general formulation since the reduction does not really simplify the proofs.}.   

First, note that any stage $m\geq 2$, player $1$'s first order belief at each stage is a Dirac mass on the current state (i.e. $x_n=\de_{k_n}$).
Thus, adding the signal of player $2$ to the signal of player $1$ does not change the beliefs of the latter, % his beliefs, 
which proves $(A1)$. 
It also implies that the second order beliefs of player $2$  can be identified with the first order beliefs of player $2$.
Let $\pi \in \Delta^*_f(K \times \NN \times \NN)$, $\tau$ be a strategy for player $2$ and $\sigma$ a reduced strategy for player $1$.  Recall that $\sigma$ is function only of $(x_1,y_1)$ which are by assumption $c_1$ measurable and that that $y_1$ is $d_1$-measurable, so that there exist two functions $h_1$ and $f_1$ such that, with probability $1$, $\CL_{\pi}(x_1 | d_1)=y_1=h_1(d_1)=f_1(c_1)$.  It follows that 
\[\PP^{\pi}_{\sigma \tau}(k_1,x_1,d_1,i_1,j_1,k_2,d_2)=\pi(d_1)h_1(d_1)(x_1)x_1(k_1)\sigma(x_1,h_1(d_1))(i_1)
\tau(d_1)(j_1)\overline{q}(k_1,i_1)(k_2,d_2).\]
We deduce that
\begin{align*}
y_2(d_1,j_1,d_2) & =\sum_{c_1,i_1,c_2} \PP^{\pi}_{\sigma \tau}(c_1,i_1,c_2| d_1,j_1,d_2) \delta_{\CL_{\PP^{\pi}_{\sigma \tau}}(k_2|c_1,i_1,c_2)} \\
 & =\sum_{c_1,i_1,c_2} \PP^{\pi}_{\sigma \tau}(c_1,i_1,c_2| d_1,j_1,d_2) \delta_{\delta_{\hat{k}(c_2)}}\\
 & =\sum_{k_2} \PP^{\pi}_{\sigma \tau}(k_2| d_1,j_1,d_2) \delta_{\delta_{k_2}}\\
 & =\frac{ \sum_{k_1,x_1,k_2,i_1} \pi(d_1)h_1(d_1)(x_1)\sigma(x_1,h_1(d_1))(i_1)\overline{q}(k_1,i_1)(k_2,d_2)\delta_{\delta_{k_2}} } {\sum_{k_1',x_1',k_2',i_1'} \pi(d_1)y_1(d_1)(x_1')\sigma(x_1',y_1(d_1))(i_1')\overline{q}(k_1',i_1')(k_2',d_2)}\\
& =\frac{ \sum_{k_1,x_1,k_2,i_1} f_1(c_1)(x_1)\sigma(x_1,f_1(c_1))(i_1)\overline{q}(k_1,i_1)(k_2,d_2)\delta_{\delta_{k_2}} } {\sum_{k_1',x_1',k_2',i_1'} f_1(c_1)(x_1')\sigma(x_1',f_1(c_1))(i_1')\overline{q}(k_1',i_1')(k_2',d_2)}\\
\end{align*}
where the above equalities hold almost surely  whenever the conditional probabilities are well-defined.
We deduce that $y_2$ is only a function of $c_1$ and $d_2$ which does not depend on $\tau$, so that the function  $f_2(c_1,c_2)=y_2(c_1,\hat{d}(c_2))$ proves that assumption $(A2)$ is satisfied. Finally the distribution of the random variable $y_2$ is equal to
\[ \eta_2= \sum_{c_1,d_2} \PP^{\pi}_{\sigma \tau}(c_1,d_2) \delta_{y_2(c_1,d_2)}.\]
Using the preceding result and that the marginal of $q$ on $D$ does not depend on the action played
by player $2$, $\eta_2$ does not depend on $\tau$ and $(A3)$ is satisfied.

\begin{remarque}
In a previous work, Renault \cite{R2006} studied the particular case where the state follows a Markov chain $f:K\to \De(K)$, player $1$ observes the state and both players observe the actions. This is easily seen as a 
particular case of the above model. In a more recent work, Neyman \cite{neyman} proved the existence of the uniform value when allowing for any signalling structure on the actions. This last result is not covered by our main theorem since in this case, player $1$ cannot control player $2$'s information about the state variable.
\end{remarque}

\subsection{Player $1$ is more informed about the state.}

In this last paragraph, we assume that actions are observed by both players  after each stage. Moreover, both players receive a public signal in a set $U$, player $1$ receives a signal in a set $S$ and player $2$ has  influence on the joint distribution of the state and signals in $K\times U \times S$.

Formally, it is a repeated game where $C= I\times J \times S \times U$, $D= I \times J \times U$ and the transition function satisfies the following two conditions. At first, the signal $u$ is public and the actions are observed:
\[\forall (k,i,j)\in K \times I \times J, \;  \sum_{k',s,u \in K \times S \times U} q(k,i,j)(k',(i,j,u),(i,j,s,u))=1.\]
Secondly, there exists a function $\hat{q}$ from $K\times I$ to $\Delta(K\times S \times U)$ such that
\[ \forall (k,k',i,j,s,u)\in K \times K \times I \times J \times S \times U,\ \ q(k,i,j)(k',(i,j,s,u),(i,j,u))=\hat{q}(k,i)(k',s,u).\]

Let us stress out that the transition $q$ in itself depends on player $2$ since it has to reveal his actions but as we will see our assumptions are still satisfied.
It was already noticed in Renault \cite{R2012} that it is too restrictive to assume that the transition is fully 
controlled by player $1$. % is too restrictive 
This model is a natural generalization of Renault's model, dropping the (important) condition of Player $1$ to know the state at every stage. However, it does not allow for imperfect monitoring of actions as in the previous examples.

Let us check that this model satisfies our assumptions. According to the description of the model, an initial distribution $\pi$ of $(k_1,s_1,u_1)$ 
can be seen as an element of $\Delta^*_f(K \times \NN\times \NN)$ since the signal of player $2$ is contained in the signal of player $1$.
As for the previous example, we will start with a general initial probability $\pi \in \Delta^*_f(K \times \NN\times \NN)$ and initial signals $(c_1,d_1)$.
At first, note that apart from the initial signal $d_1$, histories of player $2$ are contained in histories of player $1$, so that assumption $(A1)$ reduces to
\[\forall n\geq 1, \ \forall (\si,\tau)\in \Sigma \times \T, \quad \CL_{\PP^\pi_{\si\tau}}(k_{n} \mid h_n^I,d_1) =\CL_{\PP^\pi_{\si\tau}}(k_n \mid h_n^I).\]
This property is true for $n=1$ by assumption. Let us proceed by induction on $n$. Assume that $n\geq 2$ and that the property is proved for  $n-1$, i.e. that 
\[ x_{n-1}=  \CL_{\PP^\pi_{\si\tau}}(k_{n-1} \mid h_{n-1}^I,d_1) =\CL_{\PP^\pi_{\si\tau}}(k_{n-1} \mid h_{n-1}^I). \]
Again, let $\hat{q}$ be linearly extended to $\De(K)\times I$. It follows that, 
\[ \CL_{\PP^\pi_{\si\tau}}(k_n,s_n,u_n | h_{n-1}^{I},d_1,i_{n-1},j_{n-1} ) = \hat{q}(x_{n-1},i_{n-1}) = \CL_{\PP^\pi_{\si\tau}}(k_n,s_n,u_n | h_{n-1}^{I},i_{n-1},j_{n-1} ),\]
$(A1)$ follows then directly by disintegration. Moreover, we deduce that 
\begin{equation} \label{exprbelief} 
\PP^{\pi}_{\sigma,\tau}(k_n=k | h_n^{I} ) = \frac{\hat{q}(x_{n-1},i_1)(k,s_n,u_n)}{\sum_{k' \in K}\hat{q}(x_{n-1},i_1)(k',s_n,u_n)}.
\end{equation}
The latter proves that $x_2$ can be expressed as a function of $(x_1,i_1,s_2,u_2)$ which does not depend on $\tau$.
Recall then that by assumption there exist functions $h_1$ and $f_1$ such that with probability $1$, we have
\[y_1=\CL_{\pi}(x_1|d_1)=h_1(d_1)=f_1(c_1).\]
If player $1$ uses a reduced strategy $\sigma$ and player $2$ uses a strategy $\tau$, we have
\[ \PP^{\pi}_{\sigma,\tau} (k_1,x_1,d_1,i_1,j_1,k_2,s_2,u_2)=\pi(d_1)h_1(d_1)(x_1)x_1(k_1)\sigma(x_1,h_1(d_1))(i_1)\tau(d_1)(j_1)\hat{q}(k_1,i_1)(k_2,s_2,u_2).\]
We deduce that
\begin{align*}
y_2(d_1,i_1,j_1,u_2) & =\sum_{x_1,i_1,s_2} \PP^{\pi}_{\sigma,\tau}(x_1,i_1,s_2| d_1,j_1,u_2) \delta_{\CL_{\PP^{\pi}_{\sigma,\tau}}(k_2|x_1,d_1,i_1,j_1,s_2,u_2)}\\
&=\sum_{x_1,i_1,s_2} \PP^{\pi}_{\sigma,\tau}(x_1,i_1,s_2| d_1,j_1,u_2) \delta_{x_2(x_1,i_1,s_2,u_2)}.
\end{align*}
From the previous formula, we deduce
\begin{align*}
\PP^{\pi}_{\sigma,\tau}(x_1,i_1,s_2| d_1,j_1,u_2)&= \frac{\sum_{k_1} \pi(d_1)h_1(d_1)(x_1)x_1(k_1)\sigma(x_1,h_1(d_1))(i_1)\tau(d_1)(j_1)\hat{q}(k_1,i_1)(s_2,u_2)  }{\sum_{k'_1,x'_1,i'_1,s'_2} \pi(d_1)h_1(d_1)(x'_1)x'_1(k'_1)\sigma(x'_1,h_1(d_1))(i'_1)\tau(d_1)(j_1)\hat{q}(k'_1,i'_1)(s'_2,u_2)} \\
&= \frac{\sum_{k_1} f_1(c_1)(x_1)x_1(k_1)\sigma(x_1,f_1(c_1))(i_1) \hat{q}(k_1,i_1)(s_2,u_2)  }{\sum_{k'_1,x'_1,i'_1,s'_2} f_1(c_1)(x'_1) x'_1(k'_1)\sigma(x'_1,f_1(c_1))(i'_1)\hat{q}(k'_1,i'_1)(s'_2,u_2)}.
\end{align*}
Thus, $y_2$ does not depend on $\tau$ nor on $(j_1,d_1)$. Player $1$, knowing $i_1$, $c_1$ and $u_2$, can compute $y_2$, which proves that assumption $(A2)$ is satisfied.
Finally, the distribution of the random variable $y_2$ is equal to
\[ \eta_2= \sum_{c_1,i_1,u_2} \PP^{\pi}_{\sigma,\tau}(c_1,i_1,u_2) \delta_{y_2(c_1,i_1,u_2)}.\]
Since the function $\hat{q}$ does not depend on $j_1$, we deduce as above that $\eta_2$ does not depend on $\tau$ and therefore that $(A3)$ is satisfied.

\section{Discussion of the assumptions}\label{discuss}

In this section, we discuss several implications of the formal assumptions $(A1)$ ,$(A2)$, $(A3)$. 
At first, we show that it is necessary to introduce the notion of reduced strategy in order to exclude non relevant 
correlations between the players. Then, in order to answer to a suggestion made in \cite{R2012}, we show that the 
analysis cannot be made in terms of first order beliefs only, and that it is necessary to introduce second order
beliefs. Lemma  \ref{projection} shows that the value can be expressed as a function of second order beliefs. Then,
we prove that player $1$ can compute his first order beliefs, the second order beliefs of player $2$ and the distribution
of these beliefs without knowing the strategy of player $2$ as soon as he plays a Markovian strategy with respect to the 
beliefs at each stage. Finally, we give a weaker version of the theorem where the assumptions are formulated more directly
in terms of the data of the game.

\subsection{Necessity of reduced strategies.}
The introduction of reduced strategies for player $1$ is necessary in order to exclude non relevant correlations between players as shown in example \ref{reducednecessity} below. It will be shown in Lemma \ref{projection} and in the main Theorem that in our model, there is no loss in restricting player $1$ to reduced strategies.

\begin{example}\label{reducednecessity}
Let $K=\{k_1,k_2\}$, $I=\{T,B\}$, $C=\{a,b\}$, $D=\{\alpha,\beta\}$ and $J$ any finite set. The transition $q$ depends only on the action of player $1$ and  is described by the matrices
\begin{center}
\begin{tabular}{ccc} 
$\begin{matrix} T \\ B \end{matrix}$ & $\begin{pmatrix} \delta_{k_1} \\ \frac{1}{2} \delta_{k_1} + \frac{1}{2} \delta_{k_2} \end{pmatrix}$ & $\begin{pmatrix} \frac{1}{2} \delta_{k_1} + \frac{1}{2} \delta_{k_2} \\ \delta_{k_2} \end{pmatrix}.$ \\  & $k_1$ & $k_2$
\end{tabular} .
\end{center}
At each stage (including at the initial stage), the signals of the players are randomly chosen independently of the state variable with distribution
\begin{center}
\begin{tabular}{cc}
 & $\begin{matrix} \alpha & \beta \end{matrix}$ \\ $\begin{matrix} a \\ b \end{matrix}$ & $ \begin{pmatrix} 1/6 & 2/6 \\ 2/6 & 1/6 \end{pmatrix}$.
\end{tabular}
\end{center}
It is clear that signals do not contain any information on the state variable. However, assume that the initial state is $k_1$ and that player $1$ plays at the
first stage action $T$ if he receives the signal $a$ and action $B$ if he receives the signal $b$. The second order beliefs $y_2$ of player $2$ will differ if his initial private signal is equal to $\alpha$ or $\beta$. Since player $1$ is not able to compute the initial signal of player $2$, he is not able to compute the variable $y_2$ at the second stage. Nevertheless, when considering reduced strategies, signals can be omitted and player $1$ is able to compute
the beliefs of player $2$ which implies that $(A2)$ is satisfied.
\end{example}

\subsection{Second-order beliefs}

Renault \cite{R2012} conjectured that the pair of distributions of first order beliefs of both players could be
sufficient auxiliary variables. We present here an example showing the necessity to take into account second order
beliefs in the sense that there exist a game and two initial probabilities $\pi$ and $\pi'$ such that the law of first-order
beliefs are the same under $\pi$ and $\pi'$ while the values differ.

\begin{example}\label{secondorder}
We consider again the situation of example \ref{varaux}. Recall that $K=\{k_1,k_2\}$, that there are two public signals $U=\{u_1,u_2\}$ available to both players and three private signals $S=\{s_1, s_2,s_3 \}$ for player $1$. The set of signals of player $1$ is $C=U\times S$ and the set of signals of player $2$ is $D=U$. Let $\pi,\pi' \in \De(K\times S\times U)$ be defined by
\[ \begin{matrix}
 & \begin{matrix} u_1 & u_2  \end{matrix}& &\begin{matrix} u_1 & u_2  \end{matrix} \vspace{.2cm} \\ \begin{matrix} s_1 \\ s_2 \\ s_3 \end{matrix}  
 &\begin{pmatrix} \frac{8}{24} & 0 \\ \frac{1}{24} & \frac{3}{24} \\ 0 & 0 \\ \end{pmatrix}   & \begin{matrix} s_1 \\ s_2 \\ s_3 \end{matrix} 
 & \begin{pmatrix} 0 & 0 \\ \frac{1}{24} & \frac{3}{24} \\ \frac{2}{24} & \frac{6}{24} \\ \end{pmatrix} \vspace{.2cm}\\
 & k_1 & & k_2 
\end{matrix}.\]
and
\[ \begin{matrix}
 & \begin{matrix} u_1 & u_2  \end{matrix}& &\begin{matrix} u_1 & u_2  \end{matrix} \vspace{.2cm} \\
\begin{matrix} s_1 \\ s_2 \\ s_3 \end{matrix}  
 &\begin{pmatrix} \frac{6}{24} & \frac{2}{24} \\ \frac{3}{24} & \frac{1}{24} \\ 0 & 0 \\ \end{pmatrix} 
 & \begin{matrix} s_1 \\ s_2 \\ s_3 \end{matrix}  
 & \begin{pmatrix} 0 & 0 \\ \frac{3}{24} & \frac{1}{24} \\ 0 & \frac{8}{24} \\ \end{pmatrix} \vspace{.2cm}\\
 & k_1 & & k_2
\end{matrix}. \]
We will identify $\Delta(K)$ and $[0,1]$ as in example \ref{varaux}. The beliefs of Player $2$ about the state are the same in both cases and are equal to $\frac{1}{2}\delta_{\frac{3}{4}}+\frac{1}{2}\delta_{\frac{1}{4}}$. Similarly the beliefs of Player $1$ are $\frac{1}{3}\delta_{1}+\frac{1}{3}\delta_{\frac{1}{2}}+\frac{1}{3}\delta_{0}$ in both games. Moreover player $1$ observes the signal of player $2$, so that assumption $(A2)$ is satisfied. Thus, the laws of first-order beliefs are not sufficient to discriminate between $\pi$ and $\pi'$.
Let $\Gamma$ be the repeated game where $K=\{k_1,k_2\}$, $I=\{T,B\}$, $J=\{L,R\}$ and payoff $g$  given by
\[ \begin{matrix} \begin{pmatrix} 0 & 1 \\ 1 & 2 \\ \end{pmatrix} & \begin{pmatrix} 1 & \frac{1}{2} \\ 1 & 0 \\ \end{pmatrix} \\ k_1 & k_2 \end{matrix}.\]
The average payoff matrix with coefficients $(\frac{1}{2} ,\frac{1}{2})$ is $\begin{pmatrix} \frac{1}{2} & \frac{3}{4} \\ 1 & 1 \\ \end{pmatrix}.$

Let us prove that $v_1(\pi)$ and $v_1(\pi')$ are different. If player $1$ receives $s_3$, then his beliefs on the state is $0$ and thus Top is a weakly dominant action. If he receives $s_1$ or $s_2$, his beliefs is $1$ or $\frac{1}{2}$ and Bottom is a strictly dominant action. From the point of view of player $2$, playing Left if receiving $u_1$ and playing Right if receiving $u_2$ is a best reply to this strategy. Using these strategies,  we find that $v_1(\pi)=\frac{7}{8}$ and $v_1(\pi')=\frac{11}{12}.$
\end{example}

Let us prove that if assumptions $(A1a)$ and $(A2a)$ hold, then $v_\theta(\pi)$ depends only on the law of second order beliefs of player $2$. 
\begin{definition}
For all $\pi \in\De^*_f(K \times \NN \times \NN)$, define 
\[ \Phi(\pi) \triangleq  \CL_\pi\left( \CL_\pi \left( \CL_\pi ( k_1 | c_1 ) |d_1 \right)  \right). \]
\end{definition}

\begin{lemme}\label{projection}
Let $\pi,\pi' \in \De^*_f(K \times \NN \times \NN)$. If $\Phi(\pi)=\Phi(\pi')$, then $v_\theta(\pi)=v_\theta(\pi')$.
\end{lemme}
\begin{proof}
Let $(\sigma,\tau)$ be a pair of behavior strategies in $\Gamma_{\theta}(\pi)$. It is enough to show that $v_\theta(\pi)$ depends on $\pi$ only through $\eta_1 =  \Phi(\pi)$.
Recall that $x_1 := \CL_\pi ( k_1 | c_1 )$ and $y_1 :=\CL_\pi \left( \CL_\pi ( k_1 | c_1 ) |d_1 \right)$.
Note that $y_1$ is a function of $d_1$, and that $x_1$ is a function of $c_1$. Moreover, by assumption $(A2a)$, there exists a map $f_1^\pi : C' \rightarrow \De(\De(K))$ such that
\[ f_1^\pi (c_1)=y_1 \quad \pi\text{-almost surely}.\]
Let us construct a reduced version of the game $\Gamma_{\theta}(\pi)$ in which player $1$ and player $2$ are constrained to choose strategies that depend only on $c_1$ and $d_1$ through the variables $(x_1,y_1)$ and $y_1$ respectively,  and keeping the same payoff function. This game has a value since the sets of possible values of $(x_1,y_1)$ is finite and this value is exactly the value of $\Gamma_{\theta}(\tilde{\pi})$ where $\tilde{\pi}$ is the joint distribution of $(k_1,(x_1,y_1),y_1)$ seen as an element of $\Delta_f^*(K\times \NN \times \NN)$.

The sets of strategies in $\Gamma_{\theta}(\tilde{\pi})$ (denoted by $\Sigma'(\pi)$ and $\T'(\pi)$) can be seen as subsets of $\Sigma$ and $\T$ via the previous identification and we will prove that both games have the same value and that $v_{\theta}(\tilde{\pi})$ depends only on $\eta_1$.

Assume at first that $\tau \in \T'(\pi)$ and $\si\in  \Sigma$ and let $\mu$ denote the joint law of $(k_1,c_1,d_1,x_1,y_1)$ induced by $\pi$. By disintegration, we have
\begin{align*}
\gamma_\theta(\pi,\sigma,\tau)&=\int_{K\times \NN \times \De(K) \times \De(\De(K))} \gamma_\theta(k_1,\sigma(c_1),\tau(y_1))d\mu(k_1,c_1,x_1,y_1), \\
&=\int_{\NN \times \De(K) \times \De(\De(K))}  \left( \int_K  \gamma_\theta(k_1,\sigma(c_1),\tau(y_1)) d\CL_{\mu}(k_1|c_1,x_1,y_1) \right) d\mu(c_1,x_1,y_1),\\
&=\int_{\NN \times \De(K) \times \De(\De(K))} \int_{K}  \gamma_\theta(k_1,\sigma(c_1),\tau(y_1)) d\CL_{\mu}(k_1|c_1)  d\mu(c_1,x_1,y_1), \\
&=\int_{\NN \times \De(K) \times \De(\De(K))} \langle \gamma_\theta(\cdot,\sigma(c_1),\tau(y_1)), x_1 \rangle_{\RR^K} d\mu(c_1,x_1,y_1),
\end{align*}
where we used  that  $\CL_{\mu}(k_1 |c_1,x_1,y_1)=\CL_{\mu}(k_1 | c_1)$ since $(x_1,y_1)$ are $c_1$-measurable
and the notations $\gamma_\theta(.,\sigma(c_1),\tau(y_1))$ for $(\gamma_\theta(k,\sigma(c_1),\tau(y_1)))_{k\in K} \in \RR^K$
and $\langle \cdot, \cdot \rangle_{\RR^K}$ for the scalar product in $\RR^K$.
Taking the supremum over all strategies of player $1$, we obtain
\begin{align*}
\sup_{\sigma \in \Sigma} \gamma_\theta(\pi,\sigma,\tau)  &=\int_{\NN \times \De(K) \times \De(\De(K))} \sup_{\sigma(c_1)} \langle \gamma_\theta(\cdot,\sigma(c_1),\tau(y_1)), x_1 \rangle_{\RR^K} d\mu(c_1,x_1,y_1).
\end{align*}
The supremum  inside the integral is achieved by strategies depending only on $(x_1,y_1)$ since these variables are $c_1$ measurable. It means that there exists an optimal strategy in $\Sigma'(\pi)$, which proves
\[ \underset{\tau \in \T'(\pi)}{\inf} \; \underset{\si \in \Sigma }{\sup} \; \gamma_\theta(\pi,\sigma,\tau) = \underset{\tau \in \T'(\pi)}{\inf} \; \underset{\si \in \Sigma'(\pi) }{\sup} \; \gamma_\theta(\pi,\sigma,\tau).\]
Moreover the value of the reduced game depends only on $\eta_1$ since  taking the infimum over $\tau \in \T'(\pi)$,
\begin{align}
 \underset{\tau \in \T'(\pi)}{\inf} &\; \underset{\si \in \Sigma'(\pi)}{\sup}  \;\gamma(\pi,\sigma,\tau) \\
 &= \int_{\De(\De(K))} \left[ \inf_{\tau(y_1)} \int_{\De(K)} \left(\sup_{\sigma(x_1,y_1)} \langle \gamma_\theta(\cdot,\sigma(x_1,y_1),\tau(y_1)), x_1 \rangle_{\RR^K} \right) d\CL_{\mu}(x_1\mid y_1)\right] d\mu (y_1),
 \end{align}
which depends only on the law of $y_1$, since  $y_1=\CL_{\mu}(x_1 \mid d_1)= \CL_{\mu}(x_1 \mid y_1)$.

Let us prove a dual equality starting with $\si \in \Sigma'(\pi)$ and $\tau \in \T$:
\begin{align*}
\gamma_\theta(\pi,\sigma,\tau)&=\int_{K\times \NN \times \NN \times \De(K) \times \De(\De(K))} \gamma_\theta(k_1,\sigma(x_1,y_1),\tau(d_1))d\mu(k_1,c_1,d_1,x_1,y_1), \\
&=\int_{\NN \times \NN \times \De(K)\times \De(\De(K))}  \left( \int_K  \gamma_\theta(k_1,\sigma(x_1,y_1),\tau(d_1)) d\CL_{\mu}(k_1|c_1,d_1,x_1,y_1) \right) d\mu(c_1,d_1,x_1,y_1)\\
&=  \int_{\NN \times \NN \times \De(K)\times  \De(\De(K))}  \langle \gamma_\theta(\cdot,\sigma(x_1,y_1),\tau(d_1)), x_1 \rangle_{\RR^K} d\mu(c_1,d_1,x_1,y_1)\\
&= \int_{\NN \times \De(K)\times \De(\De(K))}  \langle \gamma_\theta(\cdot,\sigma(x_1,y_1),\tau(d_1)), x_1 \rangle_{\RR^K} d\mu(d_1,x_1,y_1)\\
&= \int_{\NN\times \De(\De(K))} \left(\int_{\De(K)} \langle \gamma_\theta(\cdot,\sigma(x_1,y_1),\tau(d_1)), x_1 \rangle_{\RR^K} d\CL_{\mu}(x_1 \mid d_1,y_1) \right) d\mu(d_1,y_1). \end{align*}
For the second equality, we used that $\CL_{\mu}(k_1 \mid c_1,d_1,x_1,y_1)=\CL_{\mu}(k_1 \mid c_1,d_1)= \CL_{\mu}(k_1 \mid c_1)=x_1$ which follows from the fact that $(x_1,y_1)$ is $c_1$-measurable and assumption $(A1)$. Taking the infimum over all $\tau \in \T$, it follows that
\begin{align}
 \inf_{\tau \in \Tau} \gamma(\pi,\sigma,\tau)&= \int_{\NN\times \De(\De(K))}  \inf_{\tau(d_1)}\left( \int_{\De(K)} \langle \gamma_\theta(\cdot\sigma(x_1,y_1),\tau(d_1)), x_1 \rangle_{\RR^K} d\CL_{\mu}(x_1 \mid d_1) \right) d\mu(d_1,y_1).
\end{align}
The infimum inside the integral is achieved for strategies depending only on $y_1= \CL_{\mu}(x_1\mid d_1)$ since $y_1$ is $d_1$-measurable. We proved that
\[ \underset{\si \in \Sigma'(\pi)}{\sup} \; \underset{\tau \in \T}{\inf} \;  \gamma_\theta(\pi,\sigma,\tau) = \underset{\si \in \Sigma'(\pi) }{\sup} \;\underset{\tau \in \T'(\pi)}{\inf} \; \gamma_\theta(\pi,\sigma,\tau). \]
Finally, using that $\T'(\pi) \subset \T$ and $\Sigma'(\pi) \subset \Sigma$, it follows that
\[ v_{\theta}(\pi) =\underset{\si \in \Sigma}{\sup} \; \underset{\tau \in \T}{\inf} \;  \gamma_\theta(\pi,\sigma,\tau) \geq \underset{\si \in \Sigma'(\pi)}{\sup} \; \underset{\tau \in \T}{\inf} \;  \gamma_\theta(\pi,\sigma,\tau) = v_{\theta} (\tilde{\pi}),\]
\[ v_{\theta}(\pi) = \underset{\tau \in \T}{\inf} \; \underset{\si \in \Sigma}{\sup} \; \gamma_\theta(\pi,\sigma,\tau) \leq  \underset{\tau \in \T'(\pi)}{\inf} \; \underset{\si \in \Sigma}{\sup} \; \gamma_\theta(\pi,\sigma,\tau) = v_{\theta} (\tilde{\pi}),\]
which proves the equality. Since $v_{\theta}(\tilde{\pi})$ depends only on $\eta_1$, the proof is complete.
\end{proof}

\subsection{Player $1$ can compute his beliefs without knowing player $2$'s strategy.}

Assumption $(A1)$ can be reformulated as a couple of assumptions $(A1a)$ and $(A1b)$ which are expressed in terms of $\pi$, i.e. the initial information, and $q$, i.e. the evolution of the information structure for stages $m\geq2$ respectively.
\begin{itemize}
\item[(A1a)] The probability $\pi \in \Delta( K \times C' \times D')$ is such that
\begin{equation*} %\label{indcondpi}
\forall (k,c',d') \in K\times C' \times D', \qquad \pi(c')\pi(k,c',d')= \pi(k,c')\pi(c',d')
\end{equation*}
\item[(A1b)] There exists a map $F$ from $\Delta(K) \times I \times C$ to $\Delta(K)$ such that  
\end{itemize}
\begin{equation*} %\label{indepcondq}
\forall (p,i,j,c,d,k) \in \De(K)\times I \times J \times C \times D\times K, \; q(p,i,j)[k,c,d]= F(p,i,c)[k] \sum_{k' \in K} q(p,i,j)[k',c,d].
\end{equation*}

Note that $(A1a)$ is equivalent to $\pi(k|c',d')=\pi(k|c')$ for any $(k,c',d')$ such that $\pi(c',d')>0$. Similarly $(A1b)$ could be written in terms of conditional probabilities, though we shall distinguish events with probability $0$. In addition, it highlights the first important consequence of assumption $(A1)$: player $1$ can compute his beliefs about the state variable (i.e. the conditional distribution in the right-hand-side of $(A1)$) without knowing the strategy, nor the signals, of his opponent.

\begin{proposition}
Assuming $(A1)$, player $1$ can compute $x_n$ for each $n \geq 1$ without knowing the strategy of player $2$.
\end{proposition}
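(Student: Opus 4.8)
The plan is to prove by induction on $n$ that the first order belief $x_n$ is a deterministic function of player $1$'s private history $h_n^I$ alone, obtained through a recursion that never refers to $\tau$. Concretely, I will establish that $x_1 = \pi(\cdot \mid c_1)$ and that, for every $n\geq 1$,
\[ x_{n+1} = F(x_n, i_n, c_{n+1}), \]
where $F$ is the transition map furnished by the reformulation $(A1b)$ of $(A1)$ stated above. The base case is immediate from $(A1a)$, which gives $\pi(k \mid c_1,d_1)=\pi(k\mid c_1)$, so that $x_1=\CL_\pi(k_1\mid c_1)=\pi(\cdot\mid c_1)$ depends only on $c_1=h_1^I$ (there is no $\tau$ at the first stage). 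Since by the inductive hypothesis $x_n$ is computable from $h_n^I$, and both $i_n$ and $c_{n+1}$ are observed by player $1$, the recursion above is carried out using only the data of $\Gamma$ and player $1$'s own observations.

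For the inductive step I will first invoke $(A1)$ in its conditional-independence form: given $h_n^I$, the state $k_n$ and player $2$'s history $h_n^{II}$ are independent; since $j_n$ is drawn from $\tau(h_n^{II})$ and $i_n$ from $\si(h_n^I)$ using independent randomization, it follows that $k_n$ and $j_n$ are conditionally independent given $(h_n^I,i_n)$. Writing $\be(j)\triangleq \PP^\pi_{\si\tau}(j_n=j\mid h_n^I,i_n)$ for the (a priori $\tau$-dependent) conditional law of $j_n$, this factorization together with the linear extension of $q$ to $\De(K)\times I\times J$ yields
\[ \PP^\pi_{\si\tau}(k_{n+1}=k,\, c_{n+1}=c \mid h_n^I, i_n) = \sum_{j\in J} \be(j) \sum_{d} q(x_n, i_n, j)[k, c, d]. \]
I then apply $(A1b)$, which gives $q(x_n,i_n,j)[k,c,d]=F(x_n,i_n,c)[k]\,\sum_{k'}q(x_n,i_n,j)[k',c,d]$; summing over $d$ and substituting, the entire sum over $j$ collapses to $F(x_n,i_n,c)[k]$ multiplied by a scalar $R(c)$ independent of $k$. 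The final Bayesian normalization
\[ x_{n+1}(k)=\frac{F(x_n,i_n,c_{n+1})[k]\,R(c_{n+1})}{\sum_{k'}F(x_n,i_n,c_{n+1})[k']\,R(c_{n+1})} = F(x_n,i_n,c_{n+1})[k] \]
then uses that $F(x_n,i_n,c_{n+1})$ is already a probability vector, so that the scalar $R(c_{n+1})$—which carries the whole of the $\tau$-dependence through $\be$—cancels.

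The main obstacle is precisely this cancellation of the $\tau$-dependent weights $\be(j)$. A priori the belief update requires the joint conditional law of $(k_n,j_n)$ given $h_n^I$, which involves player $2$'s strategy through the correlation between $j_n$ and $k_n$. Assumption $(A1)$ reduces this joint law to the product of $x_n$ with $\be$, and assumption $(A1b)$ guarantees that the belief-update map $F$ depends on $j$ only through the common normalizing factor; together they ensure that $\be$ affects solely the overall scale and drops out after normalization, leaving a recursion in $(x_n,i_n,c_{n+1})$ alone and hence computable by player $1$ without any knowledge of $\tau$.
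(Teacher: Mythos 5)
Your inductive step is carried out correctly and is essentially the computation the paper performs in the second half of the proof of Lemma \ref{eqbelief}: the conditional independence of $k_n$ and $h_n^{II}$ given $h_n^I$ factors the joint conditional law of $(k_n,j_n)$, the factorization of $q$ in $(A1b)$ pulls $F(x_n,i_n,c_{n+1})$ out of the sum over $(j,d)$, and the $\tau$-dependent weight cancels in the Bayes normalization because $F(x_n,i_n,c_{n+1})$ is already a probability vector. The one genuine gap is at the very start: you take the map $F$ ``furnished by the reformulation $(A1b)$'' as given, whereas the Proposition assumes only $(A1)$, and the implication $(A1)\Rightarrow(A1b)$ is not free --- it is the substantive half of Lemma \ref{eqbelief}, which is precisely the lemma the paper invokes to prove this Proposition. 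What must be shown is that the ratio
\[ \frac{q(p,i,j)[k,c,d]}{\sum_{k'\in K}q(p,i,j)[k',c,d]} \]
is independent of $(j,d)$ wherever it is well defined, so that a single map $F(p,i,c)$ exists at all. The paper obtains this by writing two versions of the conditional law of $k_2$ (one given $(c_1,i_1,c_2,d_1,j_1,d_2)$, one given $(c_1,i_1,c_2)$ only), equating them via $(A1)$ at stage $2$ under a strategy $\tau_1$ with full support, and then varying the initial distribution $\pi$ so that every $p\in\De(K)$ is realized as a value of $x_1$; only then is $F$ defined on all of $\De(K)\times I\times C$. Without this step your recursion rests on an unproved consequence of the hypothesis. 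If you either prove $(A1)\Rightarrow(A1b)$ along these lines, or read the Proposition as assuming $(A1a)+(A1b)$, the rest of your argument is complete and coincides with the paper's.
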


The proof of the Proposition follows directly from the following Lemma.

\begin{lemme}\label{eqbelief}
Assumptions $(A1)$ and $(A1a+A1b)$ are equivalent. Furthermore, the map $F$ from $\Delta(K) \times I \times C$ to $\Delta(K)$ defined in $(A1b)$ is such that for all $n \geq 2$ and  for all strategy profile $(\si,\tau)$
\[ x_{n} = F(x_{n-1},i_{n-1},c_{n-1}), \quad \PP^{\pi}_{\si\tau} \text{-almost surely}.\]
\end{lemme}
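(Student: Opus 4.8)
The plan is to work throughout in the finite-support setting, where (as noted in the footnote) all conditional laws are elementary, so I may manipulate ordinary conditional probabilities with no measurability subtleties. I will use the reformulation, already recorded after the statement of $(A1)$, that $(A1)$ at stage $n$ is the conditional independence of $k_n$ and $h^{II}_n$ given $h^I_n$, equivalently that $\CL_{\PP^\pi_{\si\tau}}(k_n\mid h^I_n,h^{II}_n)$ coincides with the $h^I_n$-measurable variable $x_n$. Since the case $n=1$ of $(A1)$ is exactly $(A1a)$ (it does not depend on $(\si,\tau)$), the whole content lies in relating the step from stage $n$ to stage $n+1$ with $(A1b)$, plus the extraction of the map $F$ in the converse.

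For the implication $(A1a)+(A1b)\Rightarrow (A1)$, together with the recursion, I would argue by induction on $n$, the base case being $(A1a)$. Assume $(A1)$ at stage $n$, so that $\CL_{\PP^\pi_{\si\tau}}(k_n\mid h^I_n,h^{II}_n)=x_n$ with $x_n$ being $h^I_n$-measurable. The key one-step computation is that, conditionally on $(h^I_n,h^{II}_n)$, the actions $(i_n,j_n)$ are drawn independently from $\si_n(h^I_n)$ and $\tau_n(h^{II}_n)$ and are independent of $k_n$, whose conditional law is $x_n$; hence, using the linear extension of $q$ in its first argument,
\[ \CL_{\PP^\pi_{\si\tau}}(k_{n+1},c_{n+1},d_{n+1}\mid h^I_n,h^{II}_n,i_n,j_n)=q(x_n,i_n,j_n). \]
Conditioning further on $(c_{n+1},d_{n+1})$ and invoking $(A1b)$ with $p=x_n$, the conditional law of $k_{n+1}$ equals $F(x_n,i_n,c_{n+1})$, which does not depend on $(j_n,d_{n+1})$ and is a function of $h^I_{n+1}=(h^I_n,i_n,c_{n+1})$ alone. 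Being $h^I_{n+1}$-measurable, this conditional law is unchanged when one drops the extra conditioning carried by $h^{II}_{n+1}=(h^{II}_n,j_n,d_{n+1})$, which is precisely $(A1)$ at stage $n+1$; reading off the same identity gives $x_{n+1}=F(x_n,i_n,c_{n+1})$, i.e. the announced recursion (equivalently $x_n=F(x_{n-1},i_{n-1},c_n)$).

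For the converse $(A1)\Rightarrow (A1a)+(A1b)$, the case $n=1$ already gives $(A1a)$, and I would produce $F$ by reversing the previous computation at stage $n=2$. Testing $(A1)$ on initial distributions of the form $p\otimes\de_{(*,*)}$ with a trivial initial signal forces $x_1\equiv p$, and $(A1)$ then requires $\CL(k_2\mid h^I_2,h^{II}_2)$ to be $h^I_2$-measurable, that is, the ratio $q(p,i_1,j_1)[k,c_2,d_2]/\sum_{k'}q(p,i_1,j_1)[k',c_2,d_2]$ to be independent of $(j_1,d_2)$. Independence in $d_2$ follows by comparing the refined belief of player $2$ with player $1$'s belief obtained after averaging $d_2$ out; independence in $j_1$ is obtained by letting player $2$ randomize with full support over $J$, so that $(A1)$ equates, for every $j_1$ in the support, the refined belief with the single $j_1$-free belief of player $1$. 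Setting $F(p,i,c)[k]$ equal to this common ratio yields $(A1b)$; ranging over initial distributions realizes every belief $p$ that is actually reachable, which is all the recursion ever uses.

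The hard part is the single propagation step of the forward direction: one must verify that after one transition the conditional law of the state, given \emph{both} players' full histories, collapses to a function of player $1$'s data alone. This is exactly where the two facets of $(A1b)$ are each needed—independence of the updated belief from the opponent's signal $d$ and from the opponent's action $j$—and where the linearity of $q$ lets the random belief $x_n$ substitute for the deterministic $p$ appearing in $(A1b)$. In the converse, the correspondingly delicate point is extracting the $j$-independence of the ratio, which does not follow from any single pure profile and genuinely requires player $2$ to mix with full support.
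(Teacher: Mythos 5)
Your proposal is correct and follows essentially the same route as the paper: the stage-one equivalence with $(A1a)$, the one-step computation $\CL(k_{n+1},c_{n+1},d_{n+1}\mid h^I_n,h^{II}_n,i_n,j_n)=q(x_n,i_n,j_n)$ combined with $(A1b)$ for the forward direction and the recursion, and the extraction of $F$ in the converse via full-support strategies of player $2$ and initial distributions realizing an arbitrary $p$ (the paper packages the induction as a renewal of the initial distribution, which is the same argument). You also correctly read the update as $x_n=F(x_{n-1},i_{n-1},c_n)$, in agreement with the paper's own proof rather than with the index typo in the statement.
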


\begin{proof}
Using the definition of conditional independence, assumption $(A1)$ at stage $1$ is equivalent to $(A1a)$. It remains to prove that $(A1)$ for $n\geq 2$ implies $(A1b)$ and the converse. Assume that $\pi$ fulfils $(A1a)$ and let $(\si_1,\tau_1)\in\De(I)^{C'}\times \De(J)^{D'}$ be strategies with full
support. By construction, we have
\[\CL_{\PP^\pi_{\si\tau}}(k_{2},c_2,d_2 \mid k_{1},c_1,i_1,d_1,j_1)= q(k_{1},i_1,j_1)\in \De(K\times C\times D). \]
It follows, using the tower property of conditional expectation and $(A1a)$ that
\[ \CL_{\PP^\pi_{\si\tau}}(k_{2},c_2,d_2 \mid c_1,i_1,d_1,j_1)= q(x_1,i_1,j_1),\]
where, by definition, $x_1$ can be written as a function of $c_1$.
On one hand, one obtains by disintegration
\[ \PP^{\pi}_{\sigma \tau}(k_2=k \mid c_2,d_2,c_1,i_1,d_1,j_1)(\sum_{\widetilde{k} \in K } q(x_1,i_1,j_1)[\widetilde{k},c_2,d_2]) = q(x_1,i_1,j_1)[k,c_2,d_2].  \]
On the other hand, the conditional law $\CL_{\PP^\pi_{\si\tau}}(k_2\mid c_1,i_1,c_2)$ is characterized by  the following expression
\begin{align*}
\PP^{\pi}_{\sigma \tau} &(k_2=k\mid c_1,i_1,c_2)(\sum_{\widetilde{k},\widetilde{d}_1,\widetilde{d}_2,\widetilde{j}_1} \pi(c_1,\widetilde{d}_1)\tau_1(\widetilde{d}_1)[\widetilde{j}_1]q(x_1(c_1),i_1,\widetilde{j}_1)[\widetilde{k},c_2,\widetilde{d}_2] )  \\
&=  \sum_{\widetilde{d}_1,\widetilde{d}_2,\widetilde{j}_1}
\pi(c_1,\widetilde{d}_1)\tau_1(\widetilde{d_1})[\widetilde{j}_1]q(x_1(c_1),i_1,\widetilde{j}_1)[k,c_2,\widetilde{d}_2].
\end{align*}
Assumption (A1) for $n=2$ implies that these two conditional
probabilities are equal, which in turn implies
\begin{align}\label{equivA1}
\frac{q(x_1,i_1,j_1)[k,c_2,d_2]}{\sum_{\widetilde{k} \in K }
q(x_1,i_1,j_1)[\widetilde{k},c_2,d_2]} =
\frac{\sum_{\widetilde{d}_1,\widetilde{d}_2,\widetilde{j}_1}
\pi(c_1,\widetilde{d}_1)\tau_1(\widetilde{d}_1)[\widetilde{j}_1]q(x_1(c_1),i_1,\widetilde{j}_1)[k,c_2,\widetilde{d}_2]}{\sum_{\widetilde{k},\widetilde{d}_1,\widetilde{d}_2,\widetilde{j}_1}
\pi(c_1,\widetilde{d}_1)\tau_1(\widetilde{d}_1)[\widetilde{j}_1]q(x_1(c_1),i_1,\widetilde{j}_1)[\widetilde{k},c_2,\widetilde{d}_2]}
\end{align}
whenever the left-hand side is well-defined. Since $\tau_1$ has full support, this implies that the right-hand side is also well-defined in this case and does not depend on $d_1,j_1,d_2$. Moreover, for all $p \in \Delta(K)$, we can choose an initial distribution $\pi$ such that $\pi( x_1 =p) > 0$. It follows that there exists a function $F$ such that 
\[ F(p,i,c)[k]= \frac{q(p,i,j)[k,c,d]}{\sum_{k' \in K } q(p,i,j)[k',c,d]},\]
whenever the right hand side is well-defined for some $(j,d)$ and extended by $1/|K|$ (say) otherwise.

For the converse assertion, we already mentioned that $(A1a)$ implies $(A1)$ for $n=1$. We are therefore allowed to write the following formula for the conditional laws,
\begin{align}\label{equ_cond_law}
\PP(k_2=k \mid c_2,d_2,c_1,i_1,d_1,j_1) &=\frac{q(x_1,i_1,j_1)[k,c_2,d_2]}{\sum_{\widetilde{k} \in K } q(x_1,i_1,j_1)[\widetilde{k},c_2,d_2]}.
\end{align}
It follows therefore that
\[\PP(k_2=k \mid c_2,d_2,c_1,i_1,d_1,j_1) = F(x_{1},i_{1},c_{2}), \quad \PP^{\pi}_{\si\tau} \text{-almost surely},\]
and since the right-hand-side is measurable with respect to the history of player $1$, we have the equality
\begin{align*}
\PP(k_2=k \mid c_2,c_1,i_1) & = \EE \big[ \PP(k_2=k \mid c_2,d_2,c_1,i_1,d_1,j_1) | c_1,i_1,c_2  \big], \\
                           & = F(x_{1},i_{1},c_{2}), \\
                           & = \PP(k_2=k \mid c_2,d_2,c_1,i_1,d_1,j_1).
\end{align*}
which proves $(A1)$ and our last assertion for $n=2$. Finally the distribution of $(k_2,(c_1,i_1,c_2),(d_1,j_1,d_2))$, seen as an element of $\De_f(K\times \NN \times \NN)$, fulfils $(A1a)$. Applying exactly the same argument with these new initial signals allows us therefore to conclude by induction on $n$.
\end{proof}

\subsection{Player $1$ can compute the beliefs of player $2$.}

The assumptions $(A1)$ and $(A2)$ are independent, as shown in  example \ref{2etpas1} below. However, $(A2)$ 
really makes sense only when player $1$ is better informed. 
%In this case, knowing the strategy used by both players, player $1$ can compute the first order beliefs of player $2$ as the barycentre of the random variable $y_1$. 

\begin{example}\label{2etpas1}
Let $\Gamma=(K,I,J,C,D,q,g)$  be  such that player $1$ is in the dark and player $2$ is perfectly informed: $K=\{\alpha,\beta\}$, $I$ and $J$ are finite, $C$ is a singleton $\{c\}$ and $D=K$. The payoff mapping is anything and the state is randomly chosen at each stage with probability $(1/2,1/2)$. Player $1$ observes nothing and player $2$ learns the state.
It is clear that player $1$'s signal is less accurate than player $2$'s, so that assumption $(A1)$ is not satisfied. On the other hand, $(A2)$ is satisfied since player $1$ knows the beliefs of player $2$ about himself which is $(\frac{1}{2},\frac{1}{2})$ whatever are the signals.
\end{example}

Under the assumptions $(A1)$ and $(A2)$, if player $1$ plays a reduced strategy, he can compute $y_2$, the belief of
player $2$ about his own belief on the state, without knowing the strategy of player $2$.

\begin{lemme}\label{f2}
Assume $(A1b)$ and $(A2b)$, and let $\pi \in \Delta^*_f(K \times \NN \times \NN)$. Then, for all $\sigma \in \Sigma'(\pi)$, there exists a map $f_2=f_2^{\pi,\sigma}$ such that for all $\tau \in \Tau$
\[ y_2= f_2(h^{I}_2), \; \PP^{\pi}_{\sigma \tau}-\text{almost surely}. \] 
\end{lemme}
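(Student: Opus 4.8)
The plan is to obtain the $h^I_2$-measurability from $(A2b)$ and then to strip off the dependence on $\tau$ of the resulting map by an explicit Bayesian computation. First I would record that, since $\pi\in\De^*_f(K\times\NN\times\NN)$ satisfies $(A1a)$ and $(A2a)$, the assumed $(A1b)$ lets me invoke Lemma \ref{eqbelief}: player $1$'s first order belief $x_2$ is a deterministic, $\tau$-independent function of $h^I_2=(c_1,i_1,c_2)$, which I write $x_2=G(c_1,i_1,c_2):=F(x_1(c_1),i_1,c_2)$. Since moreover $\si\in\Sigma'(\pi)$, assumption $(A2b)$ applies and yields, for each $\tau$, a map expressing $y_2$ as a function of $h^I_2$; the whole point is that this map is a priori $\tau$-dependent, and I must produce a single choice valid for every $\tau$. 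To that end I would fix once and for all a strategy $\hat\tau\in\T$ of player $2$ playing with full support at the first stage, and set $f_2:=f_2^{\pi,\si,\hat\tau}$, the map given by $(A2b)$ for $\hat\tau$.

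The key step is a $\tau$-independence property of $y_2$ seen as a function of $h^{II}_2$. Since $y_2=\CL_{\PP^\pi_{\si\tau}}(x_2\mid h^{II}_2)$, for a realization with $h^{II}_2=(d_1,j_1,d_2)$ of positive probability Bayes' rule gives
\[ \PP^\pi_{\si\tau}(x_2=\xi\mid d_1,j_1,d_2)=\frac{\sum_{(c_1,i_1,c_2):\,G(c_1,i_1,c_2)=\xi}\PP^\pi_{\si\tau}(c_1,i_1,c_2,d_1,j_1,d_2)}{\sum_{(c_1,i_1,c_2)}\PP^\pi_{\si\tau}(c_1,i_1,c_2,d_1,j_1,d_2)}, \]
where the event $\{x_2=\xi\}$ is written through the $\tau$-free map $G$. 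Decomposing the joint law,
\[ \PP^\pi_{\si\tau}(c_1,i_1,c_2,d_1,j_1,d_2)=\tau(d_1)(j_1)\sum_{k_1,k_2}\pi(k_1,c_1,d_1)\,\si(c_1)(i_1)\,q(k_1,i_1,j_1)(k_2,c_2,d_2), \]
the only $\tau$-dependent factor is the scalar $\tau(d_1)(j_1)$, which is common to every summand of the numerator and of the denominator and hence cancels upon normalization. This is the heart of the argument, and the step I expect to be the main obstacle. I would thus conclude that there is a map $\Psi$, not depending on $\tau$, with $y_2=\Psi(d_1,j_1,d_2)$, $\PP^\pi_{\si\tau}$-almost surely, for every $\tau$ charging the relevant history.

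Finally I would glue the two facts together. Given an arbitrary $\tau\in\T$ and a full history $(c_1,i_1,c_2,d_1,j_1,d_2)$ of positive $\PP^\pi_{\si\tau}$-probability, the only $\tau$-dependent factor in its probability is $\tau(d_1)(j_1)>0$; since $\hat\tau$ has full support at the first stage, the same history has positive $\PP^\pi_{\si\hat\tau}$-probability. Evaluating $y_2$ under $\hat\tau$ then gives $y_2=\Psi(d_1,j_1,d_2)$ by the key step and $y_2=f_2(c_1,i_1,c_2)$ by $(A2b)$, whence $\Psi(d_1,j_1,d_2)=f_2(c_1,i_1,c_2)$ on every jointly consistent history. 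As $\Psi$ is $\tau$-free, the key step applied to $\tau$ itself now gives $y_2=\Psi(d_1,j_1,d_2)=f_2(c_1,i_1,c_2)$ on positive-probability histories, that is $y_2=f_2(h^I_2)$, $\PP^\pi_{\si\tau}$-almost surely, as required. What makes the argument go through is precisely that $(A1b)$ turns $\{x_2=\xi\}$ into a function of player $1$'s history alone, so that player $2$'s first action enters the posterior only through $\tau(d_1)(j_1)$ and washes out.
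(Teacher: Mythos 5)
Your proposal is correct and takes essentially the same route as the paper's proof: both observe that, because $x_2=F(x_1,i_1,c_2)$ is a $\tau$-free function of $h^I_2$, the conditional law of player $1$'s history given $(d_1,j_1,d_2)$ depends on $\tau$ only through the scalar $\tau(d_1)(j_1)$, which cancels under normalization, so that $y_2$ is a $\tau$-independent function of $h^{II}_2$. The transfer to an arbitrary $\tau$ via a full-support reference strategy and the induced equivalence of positive-probability histories is exactly the paper's concluding step.
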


\begin{proof}
It is sufficient to prove that the map $f_2$ appearing in $(A2b)$ does not depend on $\tau$. Note that since we assumed $(A1)$,
we have $x_2 = F(x_1(c_1),i_1,c_2)$ almost surely, where $F$ is defined in $(A1b)$. Moreover, the conditional probability 
\begin{align*}
 \PP^{\pi}_{\sigma \tau} (c_1=\tilde{c}_1,i_1=\tilde{i}_1,&c_2=\tilde{c}_2 | d_1,j_1,d_2) \\
 &= \frac{\pi(\tilde{c}_1,d_1) \sigma(x_1(\tilde{c_1}),y_1(d_1))(\tilde{i}_1) \tau(d_1)(j_1) q(x_1(\tilde{c}_1,\tilde{i}_1,j_1)(\tilde{c}_2,d_2) }{\sum_{c_1',i_1',c_2'} \pi(x',d_1) \sigma(x_1(c_1'),y_1(d_1))(i_1') \tau(d_1)(j_1) q(x_1(c_1'),i_1',j_1)(c_2',d_2)} \\
 &= \frac{\pi(\tilde{c}_1,d_1) \sigma(x_1(\tilde{c_1}),y_1(d_1))(\tilde{i}_1)  q(x_1(\tilde{c}_1,\tilde{i}_1,j_1)(\tilde{c}_2,d_2) }{\sum_{c_1',i_1',c_2'} \pi(x',d_1) \sigma(x_1(c_1'),y_1(d_1))(i_1')  q(x_1(c_1'),i_1',j_1)(c_2',d_2)} 
 \end{align*}
 does not depend on $\tau$. There exists therefore a map $y_2(d_1,j_1,d_2)$ which does not depend on $\tau$, defined by the above expression everywhere it makes sense and arbitrarily elsewhere. Let $\tau^*$ be a strategy with full support. Using $(A2b)$, there exists a map $f_2^{\pi,\sigma,\tau^*}$ such that 
 \[ y_2(d_1,j_1,d_2) = f_2^{\pi,\sigma,\tau^*} (c_1,i_1,c_2), \; \PP^{\pi}_{\sigma \tau^*}-\text{almost surely}. \]
The previous computation shows that the conditional law of $(c_1,i_1,c_2)$ given $(d_1,j_1,d_2)$ does not depend on $\tau$.  Therefore, if the event
 \[\{d_1=\tilde{d}_1,j_1=\tilde{j}_1,d_2=\tilde{d}_2, c_1=\tilde{c}_1,i_1=\tilde{i}_1,c_2=\tilde{c}_2\} \]
has positive probability under  $\PP^{\pi}_{\sigma \tau}$, it also has positive probability under $\PP^{\pi}_{\sigma \tau^*}$. We deduce that $f_2^{\pi,\sigma,\tau}=f_2^{\pi,\sigma,\tau^*}$, $\PP^{\pi}_{\sigma \tau}$-almost surely for all $\tau$, which concludes the proof.   
\end{proof}

Let us now prove that player $1$ is able to play a strategy which is Markovian with respect to the beliefs. The idea is to prove by induction that if player $1$ plays a strategy which depends at stage $n-1$ only on $(x_{n-1},y_{n-1})$, then he can compute the variables $(x_n,y_n)$ at stage $n$ and play at stage $n$ a strategy which depends only on $(x_n,y_n)$, etc...
Formally, we have the following.

\begin{lemme}\label{markovstrat}
For all $\pi \in \Delta^*_f(K \times \NN \times \NN)$, and for any sequence of $\Delta(I)$-valued measurable functions $\psi_1,\psi_2,...$ defined on $\Delta(K)\times \Delta_f(\Delta(K)$, there exists a strategy $\sigma$ such that for all $\tau$ and for all $n$
\[ \sigma(h^{I}_n)= \psi_n(x_n,y_n), \quad \PP^{\pi}_{\sigma \tau}-\text{almost surely}.\]
\end{lemme}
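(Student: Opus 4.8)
The plan is to construct $\sigma$ stage by stage and to prove by induction on $n$ a slightly stronger statement: one can choose $\sigma$ to be \emph{reduced at every stage}, meaning that $\sigma(h^I_m)$ depends on $h^I_m$ only through $(x_m,y_m)$ for each $m$, and moreover the map $h^I_n \mapsto (x_n,y_n)$ is well defined $\PP^\pi_{\sigma\tau}$-almost surely and \emph{independent of $\tau$}. Granting this, one simply sets $\sigma(h^I_n):=\psi_n(x_n,y_n)$ at each stage, which is then a legitimate behaviour strategy (a map on $\HH^I$) satisfying the conclusion. For the base case $n=1$, the variable $x_1$ is by definition a function of $c_1=h^I_1$, and assumption $(A2a)$ provides $f_1$ with $y_1=f_1(c_1)$; hence $(x_1,y_1)$ is a $\tau$-independent function of $h^I_1$ (it depends on $\pi$ only), and $\sigma_1(c_1):=\psi_1(x_1,y_1)$ is reduced at stage $1$ by construction.

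For the inductive step, the main device is the embedding of finite histories into $\NN$, which lets us restart the game after stage $n-1$. Assume $\sigma$ has been defined and is reduced on stages $1,\dots,n-1$. Using that $(A1,A2)$ preserve $(A1a)$ and $(A2a)$ under reduced strategies (the remark following $(A3)$, applied one step at a time), the law $\pi_{n-1}$ of $(k_{n-1},h^I_{n-1},h^{II}_{n-1})$, viewed as an element of $\Delta_f(K\times\NN\times\NN)$, lies in $\De^*_f(K\times\NN\times\NN)$. I then apply the stage-$2$ results to the game restarted at stage $n-1$ with initial distribution $\pi_{n-1}$ and initial signals $h^I_{n-1},h^{II}_{n-1}$: the restarted stage-$2$ history of player $1$ is exactly $h^I_n=(h^I_{n-1},i_{n-1},c_n)$, its $x_2$ is $x_n$ and its $y_2$ is $y_n$. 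Lemma \ref{eqbelief} gives $x_n=F(x_{n-1},i_{n-1},c_{n-1})$, so $x_n$ is a $\tau$-independent function of $h^I_n$; and since the continuation of $\sigma$ depends on $h^I_{n-1}$ only through $(x_{n-1},y_{n-1})$, it is a reduced strategy for the restarted game, so Lemma \ref{f2} yields a map $f_2$ with $y_n=f_2(h^I_n)$, again independent of $\tau$. Thus $(x_n,y_n)$ is a $\tau$-independent function of $h^I_n$, and defining $\sigma_n(h^I_n):=\psi_n(x_n,y_n)$ keeps $\sigma$ reduced at stage $n$, completing the induction. On the null sets where the relevant conditional probabilities vanish, the functions $F,f_1,f_2$ are extended arbitrarily, which is consistent with the ``almost surely'' in the claim.

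The step I expect to be delicate is the bookkeeping behind restarting the game at stage $n-1$: one must verify that $\pi_{n-1}$ genuinely belongs to $\De^*_f$ and that player $1$'s continuation strategy qualifies as an element of $\Sigma'(\pi_{n-1})$, so that Lemmas \ref{eqbelief} and \ref{f2} apply verbatim, with their stage-$2$ conclusions reread as the stage-$n$ statements. This recursive consistency is precisely what the embedding of finite histories into $\NN\times\NN$ is designed to provide; once it is in place, the remainder is a routine propagation of the $\tau$-independence of $(x_m,y_m)$ along the stages.
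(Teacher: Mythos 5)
Your induction has the right skeleton---the paper also argues by induction, restarts the game after stage $n-1$, and invokes Lemmas \ref{eqbelief} and \ref{f2}---but the inductive step for $n\geq 3$ contains a genuine gap, and it is precisely the one the paper's own proof flags as the ``tempting but wrong'' shortcut. The law $\pi_{n-1}$ of $(k_{n-1},h^{I}_{n-1},h^{II}_{n-1})$ depends on $\tau$: player $2$'s history contains his own actions and signals, whose joint law with the rest of the play is determined by $\tau_1,\dots,\tau_{n-2}$ (and player $2$'s actions may also influence the transition itself). Hence ``the game restarted at stage $n-1$ with initial distribution $\pi_{n-1}$'' is really a family of games indexed by $\tau$, and Lemma \ref{f2} applied to each member only produces a map $f_2^{\pi_{n-1},\sigma}$ that is independent of the \emph{continuation} strategy $\tau_{n-1},\tau_n,\dots$; it may still depend on $\tau_1,\dots,\tau_{n-2}$ through $\pi_{n-1}$. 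Since $\sigma$ must be specified before $\tau$ is known, you cannot set $\sigma_n(h^{I}_n):=\psi_n(x_n,y_n)$ with $y_n=f_2^{\pi_{n-1},\sigma}(h^{I}_n)$ until you have shown that this map is the \emph{same} for every $\tau$. (The component $x_n$ is unproblematic: Lemma \ref{eqbelief} gives $x_n=F(x_{n-1},i_{n-1},c_n)$ with $F$ built from $q$ alone, so the issue concerns $y_n$ only.)

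The paper closes this gap with an invariance statement that your proposal omits: the map $f_2^{\pi,\sigma}$ of $(A2b)$ and the distribution $\eta_2$ of $(A3)$ depend on $(\pi,\sigma_1)$ only through the second-order law $\Phi(\pi)=\eta_1$ and the map $\psi$ such that $\sigma_1(c_1)=\psi(x_1,y_1)$. This is proved by coupling $\Gamma(\pi)$ with the canonical game $\Gamma(\overline{\pi})$, where $\overline{\pi}(k,p,z)=p^{k}z(p)\eta_1(z)$, using that $d_1$ and $x_1$ are conditionally independent given $y_1$ under $\pi$: for suitably matched strategies of player $2$, the joint law of $(x_1,y_1,i_1,j_1,c_2,d_2)$ is the same in both games, whence $\eta_2(\pi,\sigma_1)=\eta_2(\overline{\pi},\psi)$ and $y_2=f_2^{\overline{\pi},\psi}(x_1,y_1,i_1,c_2)$. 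Granting this, the induction closes: $\eta_{n-1}$ is computable by player $1$ without knowledge of $\tau$, so the map $h^{I}_n\mapsto y_n$ depends only on $(\eta_{n-1},\psi_{n-1})$ and is therefore genuinely $\tau$-independent. Without this extra step---which is the substantive content of the paper's proof---your induction does not close.
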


\begin{proof}
We will prove the result by induction. It is obviously true for $n=1$ due to the definition of $\Delta^*_f(K \times \NN \times \NN)$. 
For $n=2$, due to the Lemmas \ref{eqbelief}  and \ref{f2}, player $1$ can compute $x_2$ and $y_2$ as a function of $h^{I}_2$ independently of the chosen strategy of player $2$. However, to prove the property for $n \geq 3$, we cannot rely on the same argument. It would be tempting to say that the distribution of $(k_2, h^{I}_2,h^{II}_2)$ belongs to $\Delta^*_f(K \times \NN \times \NN)$ and to apply the preceding argument when starting from this new initial distribution. But this would be wrong since this distribution may depend on $\tau$. To overcome this problem, it is sufficient to prove that the map $f_2$ appearing in $(A2b)$ and the distribution $\eta_2$ appearing in $(A3)$ depend on $\pi$ only through $\Phi(\pi)$.  Indeed, in this case, reasoning by induction, player $1$ can compute $\eta_n$ as a function of $\eta_{n-1}$ and his new signals, $x_n$  using Lemma \ref{eqbelief}, and $y_n$ using the map given by $(A2b)$ which will depend only on $\eta_{n-1}$ and his own strategy.   

Let us prove this assertion. Let $\pi \in \Delta^*_f(K\times \NN \times \NN)$, and $\sigma \in \Sigma'(\pi)$ be a reduced strategy, which implies that there exists a map $\psi : K\times \Delta(K) \rightarrow \Delta(I)$ such that $\pi$-almost surely $\sigma_1(c_1)=\psi(x_1,y_1)$.  Assumption $(A3)$ implies that $\eta_2$ is a function of the initial distribution $\pi$ and $\sigma_1$ only. We denote it by $\eta_2(\pi,\sigma)$. We now prove that $\eta_2(\pi,\sigma)$ and the map $f_2^{\pi,\sigma}$ appearing in $(A2b)$ depend only on the projection of $\pi$, $\eta_1=\Phi(\pi)$, and on the map $\psi$.

At first, given $\eta_1 \in \DDDK$, we can construct a canonical probability $\overline{\pi}$ with finite support on $K\times \Delta(K) \times \Delta_f(\Delta(K))$ defined by   $\overline{\pi}(k,p,z)=p^kz(p)\eta(z)$. Applying $(A3)$ and $(A2)$ in the game $\Gamma(\overline{\pi})$ if player $1$ plays $\overline{\sigma}_1=\psi$,  there exists a distribution $\eta_2(\overline{\pi},\psi)$ and a map $f_2^{\overline{\pi},\psi} : K \times \Delta(K)$ such that $y_2=f_2^{\overline{\pi},\psi}(x_1,y_1)$ almost surely and $y_2$ has law $\eta_2(\overline{\pi},\psi)$ for all $\tau$. Recall that $\pi$ is such that $\Phi(\pi)=\eta_1$ and  that $\si_1$  is such that $\sigma_1(c_1)=\psi(x_1,y_1)$. Note also that $d_1$ and $x_1$ are conditionally independent given $y_1$ under $\pi$. Therefore, for any $\tau_1$, the joint law of $(x_1,y_1,i_1,j_1,c_2,d_2)$  is the same under $\PP^{\pi}_{\si_1,\tau_1}$ and under the probability $\PP^{\overline{\pi}}_{\psi, \tau'_1}$ where $\tau'_1$ is defined as follows: choose $d_1$ using some exogenous lottery 
such that the conditional law of $d_1$ given $y_1$ is the same as under $\pi$ and then play $\tau_1(d_1)$. We deduce that $\eta_2(\pi,\sigma_1)=\eta_2(\overline{\pi},\psi)$ and $y_2=f_2^{\overline{\pi},\psi}(x_1,y_1)$ under the probability $\PP^{\pi}_{\si_1,\tau_1}$ which concludes the proof.
\end{proof}

\subsection{A stronger version of the theorem}

To conclude this section, let us state a couple of stronger assumptions, which are expressed in terms of the data of the game more directly: Player $1$ can deduce exactly the signal received by player $2$ and player $2$ can not influence the joint law of $(x_2,d_2)$. 
\begin{definition}
For all, $x,i,j \in \Delta(K)\times I \times J$, let $q_{C\times D}(x,i,j)$ denote the marginal distribution on $C\times D$ induced by $q(x,i,j)$, i.e. $q_{C\times D}(x,i,j)(c,d)=\sum_{k,\tilde{k}}x(k)q(k,i,j)(\tilde{k},c,d)$.
\p
Let also $H_{x,i}$ the map defined on $C \times D$ by
\[ H_{x,i}(c,d)= (F(x,i,c),d) \in \De(K)\times D.\]
\end{definition}
With these notations, we can define a set of assumptions on the marginal of $q$. The assumptions $(A1)$, $(A2a)$ are unchanged and we define $(A'2b)$ and $(A'3)$.
\begin{itemize}
\item[$(A'2b)$] Player $1$ knows the signal of player $2$ i.e. there exists a map $h:C \rightarrow D$ such that for all $(k,i,j)\in K\times I \times J$,
$\sum_{c\in C} q(k,i,j)[c,h(c)]=1$.
\item[$(A'3)$]  The image probability $\phi(x,i)$ of $q_{C\times D}(x,i,j)$ by the map $H_{x,i}$ does not depend on $j$.
\end{itemize}

\begin{corollaire}
Let $\Ga$ be such that assumptions $(A1), (A2a), (A'2b)$ and $(A'3)$ are true. Then:
\[ \text{For all} \;\pi \in \De^*_f(K\times \NN \times \NN), \quad \Ga(\pi) \; \text{has a uniform value.} \]
\end{corollaire}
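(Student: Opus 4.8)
The plan is to deduce the three hypotheses $(A1)$, $(A2)$, $(A3)$ of Theorem \ref{main1} from the stronger assumptions $(A1)$, $(A2a)$, $(A'2b)$, $(A'3)$, and then to apply that theorem verbatim. As $(A1)$ and $(A2a)$ are literally among the stronger assumptions and $(A2)$ is by definition $(A2a)$ together with $(A2b)$, the entire task reduces to checking $(A2b)$ and $(A3)$. Throughout I may invoke Lemma \ref{eqbelief}, which, from $(A1)$, produces the update map $F$ and the identity $x_2=F(x_1,i_1,c_2)$ entering the definition of $H_{x,i}$.

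First I would isolate the one conditional-law computation that drives everything. Fix $\pi\in\De^*_f(K\times\NN\times\NN)$, a reduced strategy $\sigma\in\Sigma'(\pi)$ with $\sigma_1(c_1)=\psi(x_1,y_1)$, and an arbitrary $\tau\in\T$. Conditionally on player $1$'s history the pair of next signals $(c_2,d_2)$ has law $q_{C\times D}(x_1,i_1,j_1)$ (as in the proof of Lemma \ref{eqbelief}), and since $(x_2,d_2)=H_{x_1,i_1}(c_2,d_2)$, the conditional law of $(x_2,d_2)$ given $(x_1,i_1,j_1)$ is the push-forward $\phi(x_1,i_1)$, which by $(A'3)$ does not depend on $j_1$. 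Because $\tau$ is a behavior strategy, $j_1=\tau(d_1)$ is conditionally independent of $x_1$ given $d_1$, whence $\PP^\pi_{\sigma\tau}(x_1\mid d_1,j_1)=\PP^\pi_{\sigma\tau}(x_1\mid d_1)=y_1(d_1)(x_1)$; combining these facts with the fact that $\sigma$ is reduced I obtain
\[ \PP^\pi_{\sigma\tau}(x_2,d_2\mid d_1,j_1)=\sum_{x_1,i_1} y_1(d_1)(x_1)\,\psi(x_1,y_1(d_1))(i_1)\,\phi(x_1,i_1)(x_2,d_2), \]
an expression depending on $d_1$ only through $y_1(d_1)$ and independent of both $j_1$ and $\tau$.

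From this single identity both assumptions follow. Normalizing the right-hand side in its $x_2$-coordinate, $y_2=\CL(x_2\mid d_1,j_1,d_2)$ is seen to be a deterministic function of the pair $(y_1,d_2)$ alone; by $(A'2b)$ one has $d_2=h(c_2)$ and by $(A2a)$ player $1$ knows $y_1=f_1(c_1)$, so $y_2$ is a function of $(c_1,i_1,c_2)=h^I_2$, which is precisely $(A2b)$ (the zero-probability histories being handled by an arbitrary choice, exactly as in Lemma \ref{f2}). For $(A3)$, marginalizing the identity over $x_2$ shows that $\PP^\pi_{\sigma\tau}(d_2\mid d_1)$ is independent of $\tau$; since $\PP^\pi_{\sigma\tau}(d_1)=\pi(d_1)$ is fixed by the initial distribution, the joint law of $(y_1,d_2)$ does not depend on $\tau$, and $\eta_2=\CL(y_2)$, being the push-forward of this joint law through the function just exhibited, is therefore independent of $\tau$ as well.

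The step I expect to be the crux is the elimination of the realized action $j_1$ from player $2$'s own second-order belief $y_2$: assumption $(A'3)$ only ensures that $j_1$ does not affect the \emph{dynamics} of $(x_2,d_2)$, and one must separately use that a behavior strategy makes $j_1$ uninformative about $x_1$ beyond $d_1$ before concluding that player $2$ learns nothing by conditioning on his past action. A second, subtler point is that player $1$ observes $y_1$ but not $d_1$; it is essential that the conditional law above factors through $y_1(d_1)$ rather than through $d_1$ itself, for this is exactly what makes $y_2$ computable by the more informed player and thus yields $(A2b)$. Once these two points are secured, the conclusion is immediate from Theorem \ref{main1}.
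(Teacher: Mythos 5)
Your proof is correct and follows essentially the same route as the paper: both reduce the corollary to showing that, under $(A1)$ and $(A2a)$, the assumptions $(A'2b)$ and $(A'3)$ imply $(A2b)$ and $(A3)$, by computing the conditional law of $(x_2,d_2)$ as the push-forward $\phi(x_1,i_1)$ of $q_{C\times D}(x_1,i_1,j_1)$ under $H_{x_1,i_1}$ and then eliminating $j_1$ via the conditional independence of $j_1$ and $x_1$ given $d_1$. Your observation that the resulting expression factors through $y_1(d_1)$, which player $1$ knows by $(A2a)$, together with $d_2=h(c_2)$ from $(A'2b)$, is in fact a slightly more careful rendering of the paper's final step for $(A2b)$.
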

The proof of this corollary follows directly from the next Lemma.
\begin{lemme}
If $A1$ and $A2a$ hold, then $A'2b$ and $A'3$ imply $A2b$ and $A3$.
\end{lemme}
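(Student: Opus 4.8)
The plan is to fix an initial distribution $\pi \in \De^*_f(K\times \NN \times \NN)$, a reduced strategy $\si \in \Sigma'(\pi)$ (which depends on $c_1$ only through $(x_1,y_1)$) and an arbitrary $\tau \in \T$, and then to compute $y_2$ and $\eta_2$ explicitly, showing that $(A'2b)$ and $(A'3)$ remove precisely the dependence on $\tau$, on $j_1$, and on the part of $d_1$ not captured by $y_1$. The engine of the argument is the following remark: since $(A1)$ holds, Lemma \ref{eqbelief} gives $x_2 = F(x_1,i_1,c_2)$ almost surely, so that the pair $(x_2,d_2)$ equals $H_{x_1,i_1}(c_2,d_2)$. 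As $(c_2,d_2)$ is drawn according to $q_{C\times D}(x_1,i_1,j_1)$ conditionally on $(x_1,i_1,j_1)$ (using $(A1a)$, which makes $k_1$ distributed as $x_1$ given $(c_1,d_1)$), the conditional law of $(x_2,d_2)$ given $(x_1,i_1,j_1)$ is exactly the pushforward $\phi(x_1,i_1)$ appearing in $(A'3)$, which depends neither on $j_1$ nor on $\tau$.

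Building on this, I would factorize the joint law of $(x_1,d_1,i_1,j_1,x_2,d_2)$ as
\[ \PP^\pi_{\si\tau}(x_1,d_1,i_1,j_1,x_2,d_2) = \pi(d_1)\, y_1(d_1)(x_1)\, \si(x_1,y_1(d_1))(i_1)\, \tau(d_1)(j_1)\, \phi(x_1,i_1)(x_2,d_2), \]
using $(A2a)$ to write $\PP(x_1,d_1)=\pi(d_1)y_1(d_1)(x_1)$ and to replace $y_1$ by $y_1(d_1)=f_1(c_1)$ inside the reduced strategy. Computing $y_2 = \CL_{\PP^\pi_{\si\tau}}(x_2 \mid d_1,j_1,d_2)$ from this formula, the factors $\tau(d_1)(j_1)$ and $\pi(d_1)$ appear in both numerator and denominator and cancel; here it is crucial that $\phi(x_1,i_1)$ is free of $j_1$, so that no residual $\tau$-dependence survives. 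What remains depends on $d_1$ only through $y_1(d_1)$ and on $d_2$, i.e. $y_2 = g_2(y_1,d_2)$ for a map $g_2$ depending only on $(\pi,\si,q)$. Finally $(A2a)$ gives $y_1=f_1(c_1)$ and $(A'2b)$ gives $d_2=h(c_2)$ almost surely, so that $y_2=g_2(f_1(c_1),h(c_2))$ is a function of $(c_1,c_2)$; setting $f_2(c_1,i_1,c_2):=g_2(f_1(c_1),h(c_2))$ establishes $(A2b)$.

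For $(A3)$, I would use that $y_2=g_2(y_1(d_1),d_2)$ is a function of $(d_1,d_2)$, so that
\[ \eta_2 = \CL_{\PP^\pi_{\si\tau}}(y_2) = \sum_{d_1,d_2} \PP^\pi_{\si\tau}(d_1,d_2)\, \delta_{g_2(y_1(d_1),d_2)}. \]
Summing the factorized joint law over $j_1,x_1,i_1,x_2$ and using $\sum_{j_1}\tau(d_1)(j_1)=1$ together with the $j_1$-independence of $\phi$ from $(A'3)$ yields $\PP^\pi_{\si\tau}(d_1,d_2)=\sum_{x_1,i_1,x_2}\pi(d_1)y_1(d_1)(x_1)\si(x_1,y_1(d_1))(i_1)\phi(x_1,i_1)(x_2,d_2)$, which is free of $\tau$; since $g_2$ and $y_1(\cdot)$ depend only on $(\pi,\si,q)$, the distribution $\eta_2$ is independent of $\tau$, which is $(A3)$. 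The main obstacle is conceptual rather than computational: $y_2$ is by construction $h_2^{II}$-measurable, and one must show it is in fact $h_2^I$-measurable, which forces the elimination of any dependence on player $2$'s action $j_1$ and on the part of $d_1$ not captured by $y_1$. This is exactly the role played by $(A'3)$ (killing $j_1$ and $\tau$) and by the combination of reduced strategies with $(A1a)$–$(A2a)$ (collapsing the dependence on $d_1$ to $y_1$); some care is also needed to define the conditional laws and the maps $g_2,f_2$ off the set of histories of positive probability.
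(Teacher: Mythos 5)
Your proof is correct and follows essentially the same route as the paper's: both identify $(x_2,d_2)=H_{x_1,i_1}(c_2,d_2)$ via Lemma \ref{eqbelief}, invoke $(A'3)$ so that the conditional law of $(x_2,d_2)$ given the first-stage information is $\phi(x_1,i_1)$ (hence free of $j_1$ and of $\tau$), and then integrate out to get $(A3)$ and the measurability needed for $(A2b)$. Your explicit product factorization of the joint law is just a more hands-on version of the paper's conditional-independence argument, and your conclusion of $(A2b)$ via $y_1=f_1(c_1)$ and $d_2=h(c_2)$ is, if anything, slightly cleaner than the paper's phrasing.
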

\begin{proof}
It follows from the definitions and from Lemma \ref{eqbelief} that
\begin{align*}
 \CL_{\PP^\pi_{\si\tau}} (x_2,d_2 \mid c_1,d_1,i_1,j_1)&= \CL_{\PP^\pi_{\si\tau}} (F(x_1,i_1,c_2),d_2 \mid c_1,d_1,i_1,j_1)\\
 &= \CL_{\PP^\pi_{\si\tau}} (H_{x_1,i_1}(c_2,d_2) \mid c_1,d_1,i_1,j_1)= \phi(x_1,i_1),
\end{align*}
since $(x_1,i_1)$ is measurable with respect to $(c_1,d_1,i_1,j_1)$ and $\phi(x_1,i_1)$ is the image probability of $q_{C\times D}(x_1,i_1,j_1)$ by the map $H_{x_1,i_1}$. Therefore, the conditional law of the pair $(x_2,d_2)$ does not depend on the strategy of player $2$. Precisely, we have
\[ \CL_{\PP^\pi_{\si\tau}} (x_2,d_2 \mid d_1,j_1)= \EE^\pi_{\si\tau}[\phi(x_1,i_1) \mid d_1,j_1]. \]
Since $j_1$ and $(x_1,i_1)$ are conditionally independent given $d_1$ it follows that
\begin{align*}\label{eq_red}
\CL_{\PP^\pi_{\si\tau}} (x_2,d_2 \mid d_1,j_1) &= \EE^\pi_{\si\tau}[\phi(x_1,i_1) \mid d_1].
\end{align*}
The right hand side does not depend on $\tau$, so $\CL_{\PP^\pi_{\si\tau}} (x_2,d_2 \mid d_1,j_1)$ does not depend on $\tau$ and $j_1$, and the same is true for the (unconditional) law of $(x_2,y_2)$. As a consequence, the law of $y_2$ (denoted $\eta_2$) does not depend on $\tau$ which proves $(A3)$. It remains to prove that player $1$ can compute the auxiliary random variable $y_2$.  Using $(A2a)$ and that $\si$ is reduced, $i_1$ can be written as a measurable function of $(x_1,y_1)$ and of an independent random variable $u$ uniformly distributed on $[0,1]$. Recall that the conditional law of $x_1$ given $d_1$ is $y_1$, so that
\begin{align*}
\CL_{\PP^\pi_{\si\tau}} (x_2,d_2 \mid d_1,j_1) & =\EE^\pi_{\si\tau}[\phi(x_1,i_1(x_1,y_1,u)) \mid d_1]\\
& = \int_{\De(K)\times [0,1]} \phi(x,i_1(x,y_1,u)dy_1[x]du.
\end{align*}
Player $1$ can compute the conditional law of $(x_2,d_2)$ given $(d_1,j_1)$ since it depends only on $(y_1,\si)$. Moreover by assumption $(A'2b)$, he can deduce $d_1$ from his initial signal $c_1$, so he is able to compute $y_2$ which proves $(A2b)$.
\end{proof}

\section{Proof of Theorem \ref{main1}.}\label{proof}

The proof is divided into three steps. First, using Lemma \ref{projection}, we define a value function $\hat{v}$ on $\De_f(\De_f(\De(K)))$ and prove that it is concave and Lipschitz. Secondly, we introduce an auxiliary game $\G$ on $\Delta_f(\Delta(K))$ and check it satisfies some (slightly) weakened assumptions needed to apply a Theorem of Renault \cite{R2012}. This implies the existence of a uniform value in the auxiliary game. Finally we show that both players can guarantee this value in the original game: player $2$ by playing by blocks and player $1$ by using optimal Markovian strategies in the auxiliary game.

\subsection{The canonical value function $\hat{v}_\theta$}

In view of Lemma \ref{projection}, it is appropriate to work directly on the set $\De_f(\De_f(\De(K)))$, i.e. for any $\pi,\pi'$ such that $\Phi(\pi)=\Phi(\pi')$ the value of the game is the same. At first, given $\eta\in \De_f(\De_f(\De(K)))$, there is a canonical way to build a distribution $\pi$ such that $ \phi(\pi)=\eta$. 
\begin{definition}
Let $\Gamma=(K,I,J,C,D)$ be a repeated game. For any $\eta \in \De_f(\De_f(\De(K)))$, we define $D'={\rm supp}(\eta) \subset \Delta_f(\Delta(K))$ and $C'=D' \times \left(\cup_{z \in {\rm supp}(\eta)} {\rm supp}(z) \right)$. By definition of $\eta$, these sets are finite and we can define $\pi \in \Delta^*_f(K\times C' \times D')$ by
\[ \forall (k,p,z)\in K \times \Delta(K)\times \Delta_f(\Delta(K)), \, \pi(k,(p,z),z)=\eta(z)z(p)p(k). \]
To canonical game $\Ga(\pi)$ will be denoted $\widehat{\Ga}(\eta)$, and its value $\hat{v}_{\theta}(\eta)$. If $\eta=\de_z$ for some $z\in \De_f(\DeK)$, we will use the shorter notations $\widehat{\Ga}(z)=\widehat{\Ga}(\delta_z)$ and $\hat{v}_{\theta}(z)$ for the value.
\end{definition}

Informally, the game $\widehat{\Ga}(\eta)$ proceeds as follows:  $\eta$ is common knowledge, player $2$ is informed about the realization $z$ of a random variable of law $\eta$ (player $2$ learns his beliefs). Then player $1$ is informed about $z$ (his opponent's beliefs) and about the realization $p$ of a random variable of law $z$ (his own beliefs). The state variable is finally selected according to $p$, but none of the players observe it. 
If $\eta=\de_z$, for some $z\in \De_f(\DeK)$, then the set of initial signals for player $2$ is reduced to a singleton. In this case, player $1$ receives a partial information about the state, whereas player $2$ only knows the joint distribution over the state and player $1$'s signal.
Using these notations,  Lemma \ref{projection} implies that if $\pi,\pi' \in \De_f^*(K\times \NN\times \NN)$ are such that $\Phi(\pi)=\Phi(\pi')$, we have that $v_{\theta}(\pi)=v_{\theta}(\pi')=\hat{v}_{\theta}(\Phi(\pi))$.
\p
In order to study the regularity of the canonical value function, let us recall some properties of the Wasserstein distance 
\p
Let $(Z, {\rm d})$ be a compact metric space and $Lip_1(Z)$ the set of 1-Lipschitz functions on $Z$. The function
\[ \mathbf{d}: \Delta(Z) \times \Delta(Z) : (\mu,\nu) \rightarrow \underset{f \in Lip_1(Z)}{sup} \int_Z fd\mu - \int_Z f d\nu \]
is a distance on $\Delta(Z)$ which makes $\Delta(Z)$ compact. Moreover, for all $\mu,\nu \in \De(Z)$
\[ \mathbf{d}(\mu,\nu) = \underset{\pi \in \mathcal{P}(\mu,\nu)}{min} \int_{Z\times Z} |y-x|d\pi(x,y),\]
where $\mathcal{P}(\mu,\nu)$ is the set of probabilities on $Z\times Z$ having for marginals $\mu$ and $\nu$ (see e.g. \cite{villani}).

If $f$ is a bounded measurable function on $Z$, define $\tilde{f}:\Delta(Z) \rightarrow \RR$ by $\tilde{f}(\mu)=\int_Z f d\mu$. Then
\[\tilde{f} \in Lip_1(\Delta(Z),\mathbf{d}) \Leftrightarrow f \in Lip_1(Z).\]

In the following, $\De(K)$ is endowed with the $\ell_1$-norm induced by $\RR^K$ and $\De(\De(K))$ is endowed with the Wasserstein metric $\mathbf{d}$ induced by the metric space $(\De(K),\ell_1)$.

\begin{lemme}\label{canlip}
Let $\eta \in \De_f(\De_f(\De(K)))$ and  $z \in  \De_f(\De(K))$. Then $\hat{v}_{\theta}(\eta)$ is linear on $\Delta_f(\Delta_f(\Delta(K)))$ and the mapping on $\De(\De(K))$, $\hat{v}_{\theta}(z)$ is $1$-Lipschitz for the Wasserstein metric $\mathbf{d}$.
\end{lemme}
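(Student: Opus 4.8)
The plan is to prove the two assertions separately. Both rely on one elementary fact: for \emph{fixed} behavior strategies $\si,\tau$, the payoff $\ga_\theta(p,\si,\tau)$ is affine in the initial law $p\in\DeK$ of the state, since $\PP^{p}_{\si\tau}=\sum_{k\in K}p(k)\,\PP^{\de_k}_{\si\tau}$. As $g$ takes values in $[0,1]$, this gives, for all $p,p'\in\DeK$,
\[ \abs{\ga_\theta(p,\si,\tau)-\ga_\theta(p',\si,\tau)}=\Big|\sum_{k\in K}(p(k)-p'(k))\,\ga_\theta(\de_k,\si,\tau)\Big|\le\norme{p-p'}, \]
where $\norme{\cdot}$ denotes the $\ell_1$-norm on $\DeK$. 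This is the only quantitative input I will use.

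For the linearity in $\eta$, I would exploit that in $\widehat\Ga(\eta)$ the second-order belief $z$ is revealed to player $2$ by his initial signal and to player $1$ as the second coordinate of his initial signal, so $z$ is common knowledge from the start; moreover, conditionally on $z$ the game coincides with $\widehat\Ga(z)$ and otherwise does not depend on $\eta$. Hence a behavior strategy of player $1$ (resp.\ $2$) in $\widehat\Ga(\eta)$ is exactly a family $(\si^z)_{z\in{\rm supp}(\eta)}$ (resp.\ $(\tau^z)_z$) of strategies in the games $\widehat\Ga(z)$, and, writing $\ga^z_\theta$ for the payoff in $\widehat\Ga(z)$, the payoff decomposes as $\ga_\theta(\pi,\si,\tau)=\sum_z\eta(z)\,\ga^z_\theta(\si^z,\tau^z)$. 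Since the summands are decoupled across $z$, the infimum over $(\tau^z)_z$ and then the supremum over $(\si^z)_z$ are taken term by term, giving
\[ \hat v_\theta(\eta)=\sum_{z\in{\rm supp}(\eta)}\eta(z)\,\hat v_\theta(z), \]
which is precisely linearity on $\DDDK$.

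For the Lipschitz bound, fix $z,z'\in\DDK$ and $\ep>0$ and let $\si'=(\si'(p'))_{p'}$ be an $\ep$-optimal strategy of player $1$ in $\widehat\Ga(z')$. Choose an optimal coupling $\rho\in\mathcal P(z,z')$, so $\mathbf d(z,z')=\sum_{p,p'}\rho(p,p')\norme{p-p'}$, with conditional law $\rho(\cdot\mid p)$. In $\widehat\Ga(z)$, let player $1$ play the strategy $\si$ that, upon observing the realized belief $p$, draws $p'$ according to $\rho(\cdot\mid p)$ and then follows $\si'(p')$; this is a legitimate behavior strategy since it only uses private randomization on top of player $1$'s information. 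Because player $2$ gets no information about the belief in either game, his strategy set is the same in $\widehat\Ga(z)$ and $\widehat\Ga(z')$; fixing any $\tau\in\T$ and using the displayed inequality to replace the true belief $p$ by the simulated one $p'$ gives
\[ \ga^z_\theta(\si,\tau)=\sum_{p,p'}\rho(p,p')\,\ga_\theta(p,\si'(p'),\tau)\ge\sum_{p,p'}\rho(p,p')\big(\ga_\theta(p',\si'(p'),\tau)-\norme{p-p'}\big)=\ga^{z'}_\theta(\si',\tau)-\mathbf d(z,z'), \]
using that $\rho$ has second marginal $z'$. Taking the infimum over $\tau$ yields $\hat v_\theta(z)\ge\hat v_\theta(z')-\ep-\mathbf d(z,z')$; letting $\ep\to0$ and exchanging $z$ and $z'$ gives $\abs{\hat v_\theta(z)-\hat v_\theta(z')}\le\mathbf d(z,z')$. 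Since $\DDK$ is dense in $\De(\De(K))$ for $\mathbf d$ and $\hat v_\theta$ is uniformly continuous there, it extends uniquely to a $1$-Lipschitz map on $\De(\De(K))$.

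The decomposition bookkeeping and the affinity of $\ga_\theta$ are routine, so I expect the delicate point to be the strategy-transfer step in the Lipschitz bound: one must check that player $1$ may legitimately act according to the simulated belief $p'$ while the true state is distributed as $p$, that player $2$'s strategy sets really coincide across $\widehat\Ga(z)$ and $\widehat\Ga(z')$ (this is what makes the estimate uniform in $\tau$), and that the coupling randomization is compatible with the information structure so that $\si$ is a genuine behavior strategy.
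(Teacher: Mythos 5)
Your proof is correct and follows essentially the same route as the paper: linearity via the common-knowledge decomposition over the support of $\eta$, and the Lipschitz bound via an optimal coupling together with a strategy that simulates the belief from the other game and exploits the affine dependence of $\gamma_\theta$ on the initial law. The only (immaterial) differences are that you transfer an $\ep$-optimal strategy from $\widehat{\Ga}(z')$ to $\widehat{\Ga}(z)$ rather than the reverse, and that the finite supports make your randomization step trivially legitimate where the paper invokes Blackwell--Dubins.
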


\begin{proof}
The first assertion is immediate since by definition both players learn the realization of $\eta$. Let $z$,$z' \in \De_f(\De(K))$. By definition of the Wasserstein distance, there exists $\mu \in \De(\De(K)\times \De(K))$ such that the first marginal is $z$, the second is $z'$ and
\[\mathbf{d}(z,z')=\int_{\De(K)\times \De(K)} \|p-p'\|_1 d\mu(p,p').\]
We denote by  $\CL_{\mu}(p|p')$ the conditional law of $p$ given $p'$.

Let $\sigma \in \Sigma$ be a behavior strategy for player $1$ in the game $\widehat{\Ga}(z)$. As in Section \ref{miss},  $\si(p)$ denotes the strategy of player $1$ conditionally on the  signal $p$. Let us construct a general strategy for P1 as follows. Let $(\Omega,\PP)=([0,1],dx)$ be the auxiliary probability space\footnote{Using a continuum of alternatives is clearly unnecessary but allows to simplify the proof.} that will be used as a ``tossing coin''. The classical representation result of Blackwell-Dubins (see \cite{blackwelldubins}) asserts that there exists a jointly Borel-measurable map $\phi :\Omega\times \Delta(\Delta(K)) \mapsto \De(K)$ such that for all $\nu \in \De(\De(K))$, $\phi(\cdot,\nu)$ is a $\nu$-distributed random variable. Therefore, the map $\sigma'(\omega,p')=\sigma(\phi(\omega,\mu(p|p')))$ defines a general strategy which is equivalent to a behavior strategy by Kuhn's theorem. It follows that
\begin{align*}
 \gamma_{\theta}(z',\sigma',\tau) & =\int_{\De(K)\times \Omega} \gamma_\theta(p',\sigma'(\omega,p'),\tau)dz'(p')\otimes d\PP(\omega), \\
                         & =\int_{\De(K)} \left(\int_\Omega \gamma_\theta(p',\sigma(\phi(\omega,\mu(p|p'))), \tau)d\PP(\omega) \right)dz'(p'), \\
                         & =\int_{\De(K)} \left(\int_{\De(K)} \gamma_\theta(p',\sigma(p), \tau)d\CL_{\mu}(p|p') \right)d\mu(p'), \\
                         & =\int_{\De(K)\times \De(K)} \gamma_\theta(p',\sigma(p), \tau)d\mu(p,p'), \\
\end{align*}
where the last equality follows from $dz'(p')=d\mu(p')$. Recall that by assumption $g$ takes values in $[0,1]$. Consequently, $\ga(p,\si(p),\tau)\in[0,1]$, $\forall p,\si,\tau$. Hence 
\begin{align*}
 |\gamma_\theta(z',\si',\tau)-\gamma_\theta(z,\si,\tau)| & \leq \int_{\De(K)\times \De(K)} |\gamma_\theta(p,\sigma(p),\tau)-\gamma_\theta(p',\sigma(p), \tau)|d\mu(p,p'), \\                      & \leq \int_{\De(K)\times \De(K)} \|p-p'\|_1 d\mu(p,p'), \\
                                           & = \mathbf{d}(z,z').
\end{align*}
It follows that $|\hat{v}_{\theta}(z)-\hat{v}_{\theta}(z')|\leq \mathbf{d}(z,z')$, for any $z,z' \in \De_f(\DeK)$.
\end{proof}

Note that usually, the underlying space is $\Delta(K)$ with discrete metric on $K$ and, in order to prove that the value is $1$-Lipschitz, we can use the same strategy in $\Gamma(z)$ in $\Gamma(z')$. Here, we cannot use directly $\sigma$. The state space is $\Delta_f(\Delta(K))$ with the norm $1$ on $\Delta(K)$, and  two states may be close while having disjoint supports. Therefore an optimal strategy $\sigma$ in $\Gamma(z)$ may have no sense in $z'$. The idea behind the above proof is to construct, given $\sigma$ in $\Gamma(z)$, a strategy $\sigma'$ in $\Gamma(z')$ which behaves in $z'$ like $\sigma$ in $z$.

\begin{example}
Assume that $K=\{k_1,k_2\}$ and let  $z=\delta_{\frac{1}{2}}$ and $z'=\frac{1}{2} \delta_{\frac{1}{2}-\epsilon}+\frac{1}{2} \delta_{\frac{1}{2}+\epsilon}$ be two initial distributions in $\Delta_f(\Delta(K)$ (where we  identified $\Delta(K)$ and $[0,1]$). A strategy in $\Gamma(z)$ is defined only at $\frac{1}{2}$ since it can be modified elsewhere without altering  the payoff. Therefore an optimal strategy $ \sigma$ in in $\Gamma(z)$ can play anything in $\frac{1}{2}-\epsilon$ and in $\frac{1}{2}+\epsilon$ since no regularity for $\sigma$ is required. The good way to use the proximity between $z$ and $z'$ is to always play as if the initial distribution was $\frac{1}{2}$. Here we have to  define $\sigma'$ such that for all $z\in \Delta(K)$, $\sigma'(z)=\sigma(\frac{1}{2})$.
\end{example}

\begin{lemme}[Splitting procedure]\label{concavite}
The mapping $\hat{v}_{\theta}(z)$ is concave on $\Delta_f(\Delta(K))$.
\end{lemme}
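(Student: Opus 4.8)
The plan is to prove the concavity inequality
\[ \hat v_\theta(\la z + (1-\la) z') \ge \la\, \hat v_\theta(z) + (1-\la)\, \hat v_\theta(z') \]
for all $z,z' \in \DDK$ and $\la \in [0,1]$ directly from the definition of the value, by producing, for each $\ep>0$, a strategy of player $1$ in the canonical game $\widehat\Ga(w)$ (with $w \triangleq \la z + (1-\la) z'$) that guarantees the right-hand side up to $\ep$. This is the classical splitting of the informed player, applied here to player $1$'s private signal $p\in\DeK$ rather than to a state in $K$. First I would fix $\ep$-optimal strategies $\sigma_z,\sigma_{z'}\in\Sigma$ for player $1$ in $\widehat\Ga(z)$ and $\widehat\Ga(z')$, which exist since these games have values.

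In $\widehat\Ga(w)$ player $1$ observes a signal $p$ of law $w$ (the state $k_1$ being then drawn from $p$). Using an auxiliary randomization device, player $1$ draws a label $\ell \in \{z,z'\}$ with $\PP(\ell = z \mid p) = \la\, z(p)/w(p)$; this is well defined and lies in $[0,1]$ because $z,z',w$ have finite support and $w(p)=\la z(p)+(1-\la)z'(p)$. A direct Bayes computation gives $\PP(\ell = z) = \la$ and, conditionally on $\{\ell = z\}$, the signal $p$ is distributed according to $z$, and symmetrically for $z'$. After drawing $\ell$, player $1$ plays $\sigma_\ell$, feeding it the $p$-component of his signal; this is legitimate since the constant first coordinate of the initial signal carries no information and $p$ is all that $\sigma_\ell$ needs. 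Call $\sigma^*$ the resulting behavior strategy (valid by Kuhn's theorem).

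The key verification is that, for every $\tau\in\T$,
\[ \gamma_\theta(w,\sigma^*,\tau) = \la\, \gamma_\theta(z,\sigma_z,\tau) + (1-\la)\, \gamma_\theta(z',\sigma_{z'},\tau). \]
This rests on two facts. In each of $\widehat\Ga(z)$, $\widehat\Ga(z')$, $\widehat\Ga(w)$ player $2$'s initial signal ranges over a singleton and the transition $q$ and payoff $g$ are the same, so a single $\tau$ is simultaneously a strategy of all three games and player $2$ never directly observes $\ell$. Conditionally on $\{\ell=z\}$, the joint law of $(k_1,p)$ is exactly the initial law of $\widehat\Ga(z)$, and since player $1$ then plays $\sigma_z$, player $2$ plays $\tau$, and the play evolves through the common $q$, the conditional law of the whole play coincides with the law of the play in $\widehat\Ga(z)$ under $(\sigma_z,\tau)$. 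By linearity of $\gamma_\theta$ in the law of plays the displayed identity follows. Using $\ep$-optimality, $\gamma_\theta(z,\sigma_z,\tau)\ge\hat v_\theta(z)-\ep$ and $\gamma_\theta(z',\sigma_{z'},\tau)\ge\hat v_\theta(z')-\ep$ uniformly in $\tau$, hence $\sigma^*$ guarantees $\la \hat v_\theta(z)+(1-\la)\hat v_\theta(z')-\ep$ in $\widehat\Ga(w)$; letting $\ep\to0$ yields the concavity inequality.

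I expect the only genuinely delicate step to be the conditional-law identity: one must check that player $2$ cannot exploit the split even though his subsequent signals may become correlated with $\ell$. This is handled not by hiding $\ell$ from the analysis but by comparing against the \emph{same} $\tau$ in both components, player $2$ being restricted to a single strategy while player $1$ tailors his continuation to the realized label. That asymmetry is precisely what produces concavity rather than a stronger bound, and the restriction to finitely supported $z,z'$ keeps the splitting purely combinatorial, avoiding any measurability issue.
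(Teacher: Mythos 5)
Your argument is correct, but it implements the splitting idea differently from the paper. You construct the splitting strategy explicitly: in $\widehat{\Gamma}(\la z+(1-\la)z')$, player $1$ draws a label $\ell$ with the Bayes-consistent lottery $\PP(\ell=z\mid p)=\la z(p)/w(p)$ and then runs an $\ep$-optimal strategy of the corresponding component game; the proof then rests on verifying that the law of the play under $(\si^*,\tau)$ is the $\la$-mixture of the laws of the plays in $\widehat{\Gamma}(z)$ and $\widehat{\Gamma}(z')$ under $(\si_z,\tau)$ and $(\si_{z'},\tau)$, which you check correctly (the essential points being that $k_1$ and $\ell$ are conditionally independent given $p$, that $p\in\mathrm{supp}(z)$ a.s.\ on $\{\ell=z\}$, and that a single $\tau$ is a strategy of all three games). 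The paper instead never constructs a strategy: it fixes one joint lottery $(Y,P,k_1)$ and compares the two games in which player $2$ does or does not observe $Y$, so that the inequality $\hat v_\theta(\de_{\la z+(1-\la)z'})\geq \hat v_\theta(\la\de_z+(1-\la)\de_{z'})$ follows from the trivial monotonicity of the value in player $2$'s information, and the two sides are identified via Lemma \ref{projection} (the value depends only on $\Phi(\pi)$) and the linearity of $\hat v_\theta$ on $\Delta_f(\Delta_f(\Delta(K)))$ from Lemma \ref{canlip}. Your route is self-contained (it uses neither of those lemmas, only the existence of the value and the existence of $\ep$-optimal strategies) at the price of the payoff-decomposition computation; the paper's route is shorter but leans on the machinery it has already built. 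One small remark: your closing worry about player $2$ ``exploiting'' the correlation between $\ell$ and his subsequent signals is already dissolved by the identity itself --- the decomposition holds for every fixed $\tau$, and $\ep$-optimality of $\si_z,\si_{z'}$ is uniform in $\tau$, so nothing further needs to be checked there.
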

\begin{proof}
We follow the same scheme as for games with incomplete information on one side (see e.g. \cite{msz} Corollary 1.3 p.184). Let $\lambda \in [0,1]$ and let $Y$ be a random variable with values in $\{0,1\}$, such that $\PP(Y=0)=\la$.
Let $z, z' \in \Delta_f(\Delta(K))$. The random variable $P$ is selected according to the distribution $z$ if $Y=0$ and $z'$ if $Y=1$, the state variable $k_1$ is finally selected according to $p$ if $P=p$. Compare now the two following situations: on one hand, the game with initial signals $(Y,P)$ for player $1$ and nothing for player $2$ and on the other hand the game with initial signals $(Y,P)$ for player $1$ and $Y$ for player $2$. These two distributions of initial signals and states fulfill our assumptions and it's clear that the value of the second is less or equal than the value of the first for any evaluation $\theta \in \Delta_f(\NN^*)$ since the set of behavior strategies of player $2$ in the second game is larger than in the first game. Translating this inequality using $\hat{v}$, we deduce directly
\[ \hat{v}_{\theta}(\de_{\lambda z + (1-\lambda){z'}})\geq \hat{v}_{\theta}(\lambda \de_z + (1-\lambda)\de_{z'}) = \lambda \hat{v}_{\theta}(z)+ 1-\lambda \hat{v}_{\theta}(z'),\]
which proves the Lemma.
\end{proof}

\subsection{Auxiliary game $\G$}

Let $X=\De_f(\De(K))$ be the state space, which corresponds to player $2$'s belief about player $1$'s belief about the current state. It is a convex relatively compact subset of a normed vector space and we are going to express the auxiliary game and the recursive formula on this state space.

Let $\G$ be the stochastic game defined by
\begin{itemize}
\item the state space $X=\De_f(\De(K))$,
\item the action space $A=\{f:\De(K)\to \DeI, \text{measurable}\}$ for player $1$,
\item the action space $B=\De( J)$ for player $2$,
\item the payoff function $G:X \times A \times B \rightarrow [0,1]$ defined, for any $z\in X$ by
\[ G(z,a,b)= \sum_{p \in supp(z)} \sum_{(i,j)\in I\times J} b(j)a(p,i)g(p,i,j))z(p),  \]
\noindent where $\mathrm{supp}(z)$ stands for the support of $z$,
\item the transition function  $\ell: X \times A \times B \rightarrow \De_f(X)$ is defined as $\ell(z,a,b)=\Phi(Q(z,a,b))$, where $Q(z,a,b)\in \Delta_f((K) \times (\De(K) \times C) \times ( D))$ is the induced joint distribution of $(k_2,(p,i_1,c_2),(j_1,d_2))$ in the canonical game $\widehat{\Gamma}(\de_z)$ where players play at the first stage $\si_1=a$ and $\tau_1=b$. The sets $C$, $D$, $K$ and $\mathrm{supp}(z)$ being finite and using assumptions $(A1)$ and $(A2)$, we may consider $Q$ as an element in $\Delta_f^*(K \times \NN \times \NN)$.
\end{itemize}

Let us recall the definition of Choquet order on $\Delta_f(X)$.
\begin{definition}
The order $\leq$ on $\Delta_f(X)$ called (reversed) Choquet order is defined by the relation
\[ \mu \leq \nu \Leftrightarrow \; \text{For all continuous concave function on $X$}, \; \tilde{f}(\mu) \leq \tilde{f}(\nu). \]
\end{definition}

We aim to apply a weakened version of Renault \cite{R2012} to the game $\G$, thus let us first recall the hypotheses of the Theorem as they appear in the original article.
\begin{hypothese}\
\begin{itemize}
\item[H1)] The map $\ell$ does not depend on $b$.
\item[H2)] $X$ is a compact convex subset of a normed vector space,
\item[H3)] $A$ and $B$ are convex compact subsets of some topological vector spaces,
\item[H4)] $(a\mapsto G(z,a,b))$ is concave upper semi-continuous $\forall (z,b)\in X\times B$ and  $(b\mapsto G(z,a,b))$ is convex and lower semi-continuous $\forall (z,a)\in X\times A$.
\item[H5)] There exists a subset $\mathcal{C}$ of $1$-Lipschitz functions containing $\phi(1,0)$ such that for all $f$ in $\mathcal{C}$, $\alpha \in[0,1]$, the function $\phi(\alpha,f)$ is in $\mathcal{C}$, where $\phi(\alpha,f)$ is defined by
\[\forall z\in \Delta_f(X)\  \phi(\alpha,f)(z)=\sup_{a\in A} \; \min_{b\in B}\left \{ \alpha G(z,a,b)+(1-\alpha)\tilde{f}(\ell(z,a))\right\}. \]
\item [H6)]  The mapping $a \mapsto \ell(z,a)$ is concave for the Choquet order and continuous.
\item [H7)] (Splitting assumption) Let $z$ be a convex combination in $\Delta_f(\Delta(K))$, $z=\sum_{s=1}^{S} \lambda_s z_s$ and $(a_s)_{s\in S}$ be a family of actions in $A^S$. Then there exists $a\in A$ such that
\[\ell(z,a) \geq \sum_{s\in S} \lambda_s \ell(z_s,a_s) \text{ and  } \min_{b \in B}G(z,a,b) \geq \sum_{s\in S} \lambda_s \min_{b \in B} G(z_s,a_s,b).\]
\end{itemize}
\end{hypothese}

The main consequence of assumption $(A3)$ is that player $2$ cannot influence the transition in the auxiliary game so the map $\ell$ does not depend on $b$, i.e.
\[  \; \forall (z,a) \in X\times A, \, \forall b,b' \in B, \; \ell(z,a,b)=\ell(z,a,b').\]
Thus $(H1)$ is satisfied and from now on, we will work under the shorter notation $l(z,a)$ for $l(z,a,b)$.

The hypotheses $(H2,H3,H4,H6,H7)$ ensure the application of Sion's theorem in several steps of Renault's proof. Here they are not all satisfied since, for example, the set $A$ is not compact. However, it is well known that adding some geometrical hypotheses allows to weaken the topological assumptions in Sion's theorem (see, for instance, Proposition A.$8$ in Sorin's monography \cite{sorinbook}). For instance, if $A$ is a convex set, $B$ is a compact convex subset of a topological vector space, $(a\mapsto G(z,a,b))$ is concave $\forall (z,b)\in X\times B$ and $(b\mapsto G(z,a,b))$ is convex and lower semi-continuous $\forall (z,a)\in X\times A$, Sion's result applies to the one-stage game: the game $\G_1(z)$ has a value. They can be replaced without altering the proof by the following hypotheses.

\begin{hypothese}\
\begin{itemize}
\item[H2')] $X$ is a relatively compact convex subset of a normed vector space.
\item[H3')] $B$ is a convex compact subset of a topological vector space, $A$ is a convex set.
\item[H4')] $(a\mapsto G(z,a,b))$ is concave $\forall (z,b)\in X\times B$ and  $(b\mapsto G(z,a,b))$ is convex and lower semi-continuous $\forall (z,a)\in X\times A$.
\item [H6')]  The mapping $a \mapsto \ell(z,a)$ is concave for the Choquet order.
\end{itemize}
\end{hypothese}

Assumption $(H2')$ is satisfied since the Wasserstein distance can be extended to a norm on the space of finite signed measures.
Moreover assumptions $(H3')$ and $(H4')$ are clearly satisfied. Therefore, we need to prove $(H6')$ and $(H7)$.

\begin{lemme} The game $\G$ fulfills $H6'$ and $H7$.
\end{lemme}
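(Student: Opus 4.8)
The plan is to establish $H7$ first and then to read off $H6'$ as the degenerate case where all the $z_s$ are equal to $z$. I rely throughout on the fact that, by the definition of $\ell$, the measure $\ell(z,a)=\Phi(Q(z,a,b))$ is precisely the law $\eta_2$ of the second order belief $y_2=\CL(x_2\mid j_1,d_2)$ produced in $\widehat{\Ga}(\de_z)$ when player $1$ plays $a$ at the first stage; here player $1$'s first order belief is $x_1=p$, and by Lemma \ref{eqbelief} one has $x_2=F(p,i_1,c_2)$, a quantity that depends neither on $b$ nor on player $2$'s signal.

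\emph{Construction of the action.} Given a convex combination $z=\sum_{s=1}^{S}\lambda_s z_s$ with $\lambda_s>0$ (so that $\mathrm{supp}(z_s)\subset\mathrm{supp}(z)$), I would set, for each $p\in\mathrm{supp}(z)$, the posterior weights $\rho_p(s)=\lambda_s z_s(p)/z(p)$ and define $a(p)=\sum_{s}\rho_p(s)\,a_s(p)$, extending $a$ arbitrarily and measurably off $\mathrm{supp}(z)$; then $a\in A$, and the construction is designed so that
\[ a(p,i)\,z(p)=\sum_{s}\lambda_s z_s(p)\,a_s(p,i),\qquad \forall (p,i).\]
Inserting this identity in the definition of $G$ gives, for every $b\in B$,
\[ G(z,a,b)=\sum_{p}\sum_{i,j}b(j)g(p,i,j)\sum_{s}\lambda_s z_s(p)a_s(p,i)=\sum_{s}\lambda_s\,G(z_s,a_s,b),\]
so that $\min_{b}G(z,a,b)\geq\sum_{s}\lambda_s\min_{b}G(z_s,a_s,b)$ by superadditivity of the minimum. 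This is the payoff half of $H7$.

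\emph{Transition inequality.} This is the crux. I would realize the first stage on a single probability space carrying a latent index $s$ of law $\lambda$: draw $s\sim\lambda$, then $p\sim z_s$, $k_1\sim p$, $i_1\sim a_s(p)$, $j_1\sim b$, and finally $(k_2,c_2,d_2)\sim q(k_1,i_1,j_1)$. Forgetting $s$, the $p$-marginal is $z$ and $\CL(i_1\mid p)=a(p)$, so the marginal dynamics are exactly those defining $\ell(z,a)$; conditioning on $\{S=s\}$ reproduces the dynamics defining $\ell(z_s,a_s)$. The key structural point, via Lemma \ref{eqbelief}, is that $x_2=F(p,i_1,c_2)$ does not involve $s$. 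Writing $y_2=\CL(x_2\mid j_1,d_2)$ and $y_2^{(s)}=\CL(x_2\mid j_1,d_2,s)$, the tower property then gives the mean-preserving identity $y_2=\EE[\,y_2^{(s)}\mid j_1,d_2\,]$ in $\De(\De(K))$, while the law of $y_2$ is $\ell(z,a)$ and the law of $y_2^{(s)}$ is $\sum_{s}\lambda_s\ell(z_s,a_s)$. For any continuous concave $f$ on $X$, conditional Jensen yields $\EE[f(y_2^{(s)})\mid j_1,d_2]\leq f(y_2)$, hence
\[ \tilde f\Big(\sum_{s}\lambda_s\ell(z_s,a_s)\Big)=\EE[f(y_2^{(s)})]\leq\EE[f(y_2)]=\tilde f\big(\ell(z,a)\big).\]
By the definition of the reversed Choquet order this reads $\ell(z,a)\geq\sum_{s}\lambda_s\ell(z_s,a_s)$, completing $H7$.

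Finally, taking $z_s=z$ for every $s$ makes $\rho_p(s)=\lambda_s$, so the constructed $a$ is the pointwise convex combination $\sum_{s}\lambda_s a_s$, and $H7$ specializes to $\ell(z,\sum_{s}\lambda_s a_s)\geq\sum_{s}\lambda_s\ell(z,a_s)$, which is exactly the concavity $H6'$. I expect the transition inequality to be the delicate step: the argument hinges on coupling the mixture so that it yields $\ell(z,a)$ on the margin and each $\ell(z_s,a_s)$ on the fibres, and on using that $x_2$ is free of $s$ (Lemma \ref{eqbelief}) to recognize $y_2$ as the conditional mean of $y_2^{(s)}$ before applying Jensen.
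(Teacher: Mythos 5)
Your proof is correct and follows the paper's decomposition essentially step for step: the action $a$ you build from the posterior weights $\rho_p(s)=\lambda_s z_s(p)/z(p)$ is exactly the one the paper obtains ``by disintegration'' from the identity $\mu(z,a)=\sum_s\lambda_s\mu(z_s,a_s)$ on joint laws over $\De(K)\times I$; the payoff half ($G$ linear in $\mu(z,a)$, then superadditivity of $\min_b$) is identical; and the reduction of $H6'$ to the case $z_s=z$, where $a$ becomes the pointwise convex combination $\sum_s\lambda_s a_s$, is the same. The one place you diverge is the transition inequality. The paper writes $Q(z,a,b)=\sum_s\lambda_s Q(z_s,a_s,b)$, pushes forward by $F$ (using Lemma \ref{eqbelief}) to land in a finite product $C''\times D'$, and then factors $\ell(z,a)$ through the disintegration map $m\mapsto\sum_{d'}m(d')\de_{\CL_m(c''\mid d')}$, whose Choquet-concavity it imports from Renault's Lemma 4.16. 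You instead prove that concavity inline: couple the mixture through a latent index $s$, use Lemma \ref{eqbelief} to see that $x_2=F(p,i_1,c_2)$ is unaffected by conditioning on $s$ (this is the step that makes the fibre-wise beliefs comparable to the marginal ones, and you correctly flag it), identify $y_2$ as the conditional mean of $y_2^{(s)}$ by the tower property, and apply conditional Jensen to a continuous concave test function. This is in substance the standard proof of the cited concavity lemma, so the two arguments coincide mathematically; yours buys self-containedness at the cost of having to set up and justify the coupling, which you do correctly.
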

\begin{proof}
Let $z$ be a convex combination in $X$, $z=\sum_{s=1}^{S} \lambda_s z_s$ and $(a_s)_{s\in S}$ be a family of actions in $A^S$. Denote $\mu(z_s,a_s) \in \Delta_f(\Delta(K)\times  I)$ the joint law induced on $\Delta(K)\times  I$ by $(z_s,a_s)$. By disintegration, there exists $a\in A$ such that $\mu(z,a)=\sum_{s\in S} \lambda_s \mu(z_s,a_s)$. A first, note that $Q(z,a,b)= \sum_{s\in S}\lambda_s Q(z_s,a_s,b)$.

Given $(z,a,b)$, we consider the canonical game $\hat{\Gamma}(z)$. In this game, a pair $(k,p)$ is chosen according to the probability $\pi \in \Delta_f(K \times \Delta(K))$ defined by $\pi(k,p)=p^kz(p)$ for all $(k,p) \in K \times \Delta(K)$. Then, player $1$ receives the signal $c_1=p$ and player $2$ receives no initial signal. We associate to $(a,b)$ a pair strategies for the first stage $(\sigma_1,\tau_1)$ by $\sigma_1(p)=a(p)$ and $\tau_1=b$. Then, $Q(z,a,b)$ denotes the joint distribution of $(k_2,(p,i_1,c_2),(j_1,d_2))$. Since the conditional law of $(k_2,c_2,d_2)$ given $(p,i_1,j_1)$ is $q(p,i_1,j_1)$, it follows that $Q(z,a,b)$ is bilinear with respect to $(\mu(z,a),b)$ (with abusive notations).

Let $\rho=Q(z,a,b)$ (resp. $\rho_s=Q(z_s,a_s,b)$) and $c'=(p,i_1,c_2)$ (resp. $d'=(j_1,d_2)$). Let $C'= (\cup_{s\in S} supp(z_s)) \times I \times C$ and $D'=J\times D$.
 By construction,
\[ \ell(z,a) =\Phi(\rho)= \CL_{\rho}(\CL_{\rho}( \CL_{\rho}(k_2 \mid c') \mid d') )= \sum_{d' \in D'} \rho(d')\de_{\CL_{\rho}( \CL_{\rho}(k_2 \mid c') \mid d')}.\]
Using Lemma \ref{eqbelief}, we have the following equality $\rho$-almost surely
\[ \CL_{\rho}(k_2 \mid p,i_1,c_1)= F(p,i_1,c_2).\]
This implies that
\[ \CL_{\rho}(\CL_{\rho}(k_2 \mid c'),d')=\CL_{\rho}(F(c'),d') \in \De_f( C'' \times D') \]
where $C''=F(C')$. A similar equality holds with $\rho_s$ instead of $\rho$ for all $s\in S$. By definition of $l(z,a)$, we deduce that
\[ \ell(z,a)=\CL_{\rho}(\CL_{\rho}(F(c')\mid d'))= \Psi(\CL_{\rho}(F(c'),d')), \]
where $\Psi$ is the disintegration map defined by
\[ \Psi : \Delta( C'' \times D') \rightarrow \Delta_f( \Delta(C'')) : m\rightarrow \sum_{d' \in D'} m(d') \delta_{\CL_m(c''|d')}, \]
where $\CL_m(c''|d')$ denoted the conditional law of $c''$ given $d'$. It was proved in Renault \cite{R2012} (Lemma 4.16) that $\Psi$ is concave for the Choquet order on $\Delta_f(\Delta(C''))$. However, $C''$ being a finite subset of $\Delta(K)$, $\Delta(C'')$  is identified as a compact convex subset of $X$. It follows easily that the convex order on $\Delta_f(\Delta(C''))$ coincides with the order induced by the convex order on $\Delta_f(X)$.

We conclude that the first part of H7 holds since
\[ l(z,a) = \Psi (\sum_{s\in S} \lambda_s\CL_{\rho_s}(F(c'),d')) \geq \sum_{s\in S} \lambda_s \Psi(\CL_{\rho_s}(F(c'),d')) = \sum_{s\in S}\lambda_s l(z_s,a_s). \]
For the second part of H7, it is sufficient to note that (again with abusive notations) $\mu(z,a) \mapsto G(z,a,b)$ is linear so that for all $b\in B$
\[  G(z,a,b) = \sum_{s\in S} \lambda_s G(z_s,a_s,b),\]
which implies the result. Finally, in case $z_s=z$ for all $s$, the same arguments also imply $(H6')$ since in this case one can choose $a=\sum_{s\in S}\lambda_s a_s$ in the above proof.
\end{proof}

The proof of the following proposition follows from Proposition 3.21 in Renault \cite{R2012}.
\begin{proposition}
Assuming $(H1,H2',H3',H4',H6',H7)$, then for any $\theta \in \De_f(\NN^*)$ and any $\eta \in \Delta_f(X)$, the game $\G_{\theta}(\eta)$ has a value $w_{\theta}(\eta)$ such that
\begin{align} \forall z \in X, w_{\theta}(z)&=
\sup_{a\in A} \; \min_{b\in B}\left \{ \theta_1 G(z,a,b)+(1-\theta_1)w_{\theta^+}(\ell(z,a))\right\},\\
&= \min_{b\in B} \sup_{a\in A} \left \{ \theta_1 G(z,a,b)+(1-\theta_1)w_{\theta^+}(\ell(z,a))\right\},
\end{align}
where $\theta^+$ is defined by $\theta^+_t= \frac{\theta_{t+1}}{\sum_{m \geq 2} \theta_m}$ for $t\geq 1$ whenever $\sum_{m \geq 2} \theta_m >0$ and is defined arbitrarily otherwise. Moreover, in $\G_{\theta}(\eta)$, player $1$ has $\varepsilon$-optimal Markov strategies for all $\varepsilon>0$ and player $2$ has optimal Markov strategies.
\end{proposition}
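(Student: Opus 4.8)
The plan is to obtain the statement as a direct transcription of Proposition 3.21 of Renault \cite{R2012} to the auxiliary stochastic game $\G$, the only genuine work being the replacement of the topological hypotheses $(H2,H3,H4,H6)$ by the geometric ones $(H2',H3',H4',H6')$ already verified for $\G$. First I would recall the content of Renault's proposition: for a stochastic game satisfying $(H1)$--$(H7)$ and any finitely supported evaluation $\theta$, the $\theta$-game has a value $w_\theta$ obeying the Shapley-type recursion linking $w_\theta$ to $w_{\theta^+}$, with player $2$ possessing optimal Markov strategies and player $1$ possessing $\varepsilon$-optimal Markov strategies. Since $\theta\in\De_f(\NN^*)$ has finite support, $\theta^+$ is an evaluation of strictly smaller support, so the recursion is resolved by a finite backward induction whose base case is a one-shot game; this is what reduces the whole statement to solving the one-stage games correctly.

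The heart of the argument is to show that each one-stage game occurring in this induction has a value and satisfies the interchange $\sup_{a}\min_{b}=\min_{b}\sup_{a}$. Fixing $z\in X$ and the continuation value $w_{\theta^+}$, I would consider
\[ \Lambda(a,b)\triangleq \theta_1\, G(z,a,b)+(1-\theta_1)\,w_{\theta^+}(\ell(z,a)). \]
By $(H4')$ the map $b\mapsto\Lambda(a,b)$ is convex and lower semi-continuous on the convex compact set $B$, while $a\mapsto\Lambda(a,b)$ is concave on the convex set $A$: the first summand is affine in $a$ by $(H4')$, and the second is concave because $\ell(z,\cdot)$ is concave for the Choquet order by $(H6')$ and $w_{\theta^+}$ is concave on $X$. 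This last point is the crux, and it is matched exactly to the definition of the order: a concave $w_{\theta^+}$ on $X$ is, by the very definition of the reversed Choquet order, non-decreasing for $\leq$ on $\Delta_f(X)$, so the Choquet-concavity of $\ell(z,\cdot)$ transports to ordinary concavity of $a\mapsto w_{\theta^+}(\ell(z,a))$ — provided concavity of $w_{\theta^+}$ is carried through the induction, which is precisely what the splitting Lemma \ref{concavite} guarantees from $(H7)$. Although $A$ fails to be compact, the geometric form of Sion's theorem (Proposition A.8 in \cite{sorinbook}) applies under exactly $(H3',H4')$, yielding the value of the one-stage game and the minimax interchange; lower semi-continuity together with compactness of $B$ makes player $2$'s minimum attained, whereas the non-compactness of $A$ permits only $\varepsilon$-optimal actions for player $1$, which accounts for the asymmetry in the conclusion.

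With the one-stage games solved, the value function and the Markov strategies are assembled by the same backward induction as in \cite{R2012}: player $2$ selects an exact minimiser at each stage, and player $1$ selects an action whose one-stage loss is small enough that the accumulated losses total at most $\varepsilon$. The concavity (Lemma \ref{concavite}) and the $1$-Lipschitz regularity (Lemma \ref{canlip}) of the canonical value supply the structural input needed for these selections and for their measurable dependence on the current state. Finally, the value $w_\theta(\eta)$ for a general $\eta\in\Delta_f(X)$ follows from the values $w_\theta(z)$, $z\in X$, by linearity: in $\G_\theta(\eta)$ the realisation $z$ of $\eta$ is common knowledge before play begins, exactly as in the linearity statement of Lemma \ref{canlip}, so $w_\theta$ is affine in $\eta$ and the recursion, stated for $z\in X$, extends to all of $\Delta_f(X)$.

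The step I expect to be the main obstacle is the bookkeeping needed to confirm that \emph{every} invocation of compactness or continuity in Renault's original proof of Proposition 3.21 is in fact an application of Sion's theorem to a game of the form $\Lambda$ above, so that each such invocation can be replaced by the geometric version without disturbing the rest of the argument. In particular one must verify that the inductive hypothesis of concavity (and $1$-Lipschitz regularity) of $w_{\theta^+}$ is genuinely preserved by the Shapley operator, since it is this concavity alone that makes $a\mapsto w_{\theta^+}(\ell(z,a))$ concave and hence legitimises the minimax interchange at the following stage; the linearity in $\eta$ is what reconciles the evaluation $w_{\theta^+}(\ell(z,a))$, in which $\ell(z,a)\in\Delta_f(X)$, with the concavity of $w_{\theta^+}$ viewed as a function on $X$.
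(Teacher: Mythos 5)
Your proposal is correct and follows essentially the same route as the paper, which proves this proposition simply by invoking Proposition 3.21 of Renault (2012) after having observed, in the preceding discussion, that the topological hypotheses enter Renault's argument only through applications of Sion's theorem and can therefore be replaced by the geometric hypotheses $(H2'),(H3'),(H4'),(H6')$ via Proposition A.8 of Sorin's monograph. One small caution: the concavity of $w_{\theta^+}$ needed for the minimax interchange must be obtained inside the auxiliary game from $(H7)$ along Renault's induction, not borrowed from Lemma \ref{concavite}, since the identification $w_\theta=\hat v_\theta$ is only established afterwards in Proposition \ref{val_eq}.
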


In order to prove the last assumption $(H5)$, we first prove that the value of the game $\mathcal{G}_{\theta}$ is equal to the canonical value function $\hat{v}_{\theta}$. Since we proved that the canonical value is $1$-Lipschitz, it will imply using the previous Proposition that the set of functions $\mathcal{C}=\{v_\theta, \theta \in \Delta_f(\NN^*)\}$ satisfies $(H5)$.

We now prove that the value functions of both games are the same. The proof is classic and consists to show that both families of functions are linked by the same recursive formula.
\begin{proposition}\label{val_eq}
For all $\theta\in \De_f(\NN^*)$ and for any $z \in X$, $w_{\theta}(z)=\hat{v}_{\theta}(z)$.
\end{proposition}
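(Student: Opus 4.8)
The plan is to prove that $\hat{v}_\theta$ satisfies the very same recursive formula as $w_\theta$, and then to conclude by backward induction on the ``length'' $n(\theta)\triangleq\max\{m\in\NN^*:\theta_m>0\}$ of the finitely supported evaluation. The previous Proposition already gives, for every $z\in X$,
\[ w_\theta(z)=\sup_{a\in A}\;\min_{b\in B}\left\{\theta_1 G(z,a,b)+(1-\theta_1)w_{\theta^+}(\ell(z,a))\right\}, \]
so it suffices to establish the identical relation for $\hat{v}_\theta$, noting that $n(\theta^+)=n(\theta)-1$ whenever $n(\theta)\geq 2$. For the base case $\theta=\de_1$ (i.e. $n(\theta)=1$ and $\theta_1=1$), the game $\widehat{\Ga}(z)$ is a one-shot game whose value, after reducing player $1$ to first-stage strategies, is $\sup_{a\in A}\min_{b\in B}G(z,a,b)$; this equals $w_{\de_1}(z)$ since the continuation term carries weight $1-\theta_1=0$. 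Assuming $w_{\theta^+}=\hat{v}_{\theta^+}$, the inductive step reduces to proving
\[ \hat{v}_\theta(z)=\sup_{a\in A}\;\min_{b\in B}\left\{\theta_1 G(z,a,b)+(1-\theta_1)\hat{v}_{\theta^+}(\ell(z,a))\right\}. \]

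To obtain this relation I would decompose the canonical game $\widehat{\Ga}(z)$ along its first stage. By Lemma \ref{projection} there is no loss in restricting player $1$ to reduced strategies, and by Lemma \ref{markovstrat} he may take them Markovian in $(x_n,y_n)$; a first-stage reduced strategy is then exactly an action $a\in A$, while player $2$'s first move is $b\in B$. The stage-$1$ expected payoff is $\EE[g(k_1,i_1,j_1)]=G(z,a,b)$ by definition of $G$. Writing the total $\theta$-payoff as the stage-$1$ payoff weighted by $\theta_1$ plus the $\theta^+$-payoff of stages $\geq 2$ weighted by $1-\theta_1$ (since $\sum_{m\geq2}\theta_m=1-\theta_1$ and $\theta^+_t=\theta_{t+1}/(1-\theta_1)$), the continuation is itself a repeated game whose initial data $(k_2,h^I_2,h^{II}_2)$ lie in $\De^*_f(K\times\NN\times\NN)$ (remark following $(A3)$).

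The crucial point is the identification of the continuation value. Conditionally on the realised second-order belief $y_2=z'$, the continuation is precisely the canonical game $\widehat{\Ga}(z')$: player $2$'s second-order belief equals $z'$ by construction, while player $1$ knows $z'$ by Lemma \ref{f2} together with his own first-order belief $x_2$. Hence its $\theta^+$-value is $\hat{v}_{\theta^+}(z')$, and averaging over the law $\eta_2$ of $y_2$ while invoking the linearity of $\hat{v}_{\theta^+}$ on $\De_f(\De_f(\De(K)))$ (Lemma \ref{canlip}) yields the continuation contribution $(1-\theta_1)\hat{v}_{\theta^+}(\eta_2)$. By assumption $(A3)$ — equivalently $(H1)$ — one has $\eta_2=\ell(z,a)$ independently of $b$, so this term equals $(1-\theta_1)\hat{v}_{\theta^+}(\ell(z,a))$ and is free of $b$. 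Assembling the two contributions gives the integrand $\theta_1 G(z,a,b)+(1-\theta_1)\hat{v}_{\theta^+}(\ell(z,a))$ once the continuation is played optimally; since $a\mapsto(\cdot)$ is concave (linearity of $G$ in $a$, Choquet-concavity of $\ell(z,\cdot)$ from $(H6')$, and concavity of $\hat{v}_{\theta^+}$ from Lemma \ref{concavite}) and $b\mapsto(\cdot)$ is convex and lower semicontinuous, Sion's theorem lets us exchange $\sup_a$ and $\min_b$, producing the displayed recursion and hence $\hat{v}_\theta=w_\theta$.

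I expect the main obstacle to be the rigorous identification of the post-stage-$1$ continuation with the canonical game $\widehat{\Ga}(y_2)$: one must verify that, conditionally on $y_2$, the joint law of the state and of both players' private information genuinely coincides with that of $\widehat{\Ga}(y_2)$, with $y_2$ common knowledge at the start of stage $2$ (player $1$ computes it via Lemma \ref{f2}, player $2$ because it is his own second-order belief) and with no residual correlation between the players — which is exactly what restricting to reduced strategies rules out. The remaining steps are the standard bookkeeping of the recursive formula for a finitely supported evaluation.
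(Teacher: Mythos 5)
Your proposal is correct and follows essentially the same route as the paper: induction on the support of $\theta$, the base case $\theta=\de_1$ from the definitions, and the identification of the continuation value through $v_{\theta^+}(Q(z,a,b))=\hat{v}_{\theta^+}(\Phi(Q(z,a,b)))=\hat{v}_{\theta^+}(\ell(z,a))$ via Lemma \ref{projection} (your conditioning on $y_2$ plus linearity of $\hat{v}_{\theta^+}$ in $\eta$ is just a factored version of this). The only cosmetic deviation is that you invoke Sion's theorem to exchange $\sup_a$ and $\min_b$, whereas the paper lets each player defend the corresponding one-sided quantity (player $1$ responding to the law $b$ of $\tau_1$, player $2$ to $a=\si_1$) and concludes from $\text{maxmin}\leq\text{minmax}$.
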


\begin{corollaire}
The game $\G$ fulfills $(H5)$.
\end{corollaire}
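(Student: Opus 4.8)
The plan is to exhibit the family $\mathcal{C} = \{\hat{v}_\theta : \theta \in \Delta_f(\NN^*)\}$, where each $\hat{v}_\theta$ is regarded as a function on $X = \Delta_f(\Delta(K))$ through $z \mapsto \hat{v}_\theta(z)$, and to verify the three requirements of $(H5)$ in turn. The first is immediate: by Lemma \ref{canlip} every $\hat{v}_\theta$ is $1$-Lipschitz for $\mathbf{d}$, so $\mathcal{C}$ is a family of $1$-Lipschitz functions. For the anchoring condition, I would observe that $\phi(1,0)(z) = \sup_{a \in A} \min_{b \in B} G(z,a,b)$ is exactly the value $w_{\delta_1}(z)$ of the one-shot game $\G_{\delta_1}(z)$, where $\delta_1$ denotes the evaluation concentrated on the first stage; Proposition \ref{val_eq} then gives $\phi(1,0) = w_{\delta_1} = \hat{v}_{\delta_1} \in \mathcal{C}$.

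The heart of the argument is the stability requirement: for $f = \hat{v}_\theta \in \mathcal{C}$ and $\alpha \in [0,1]$ one must show $\phi(\alpha,f) \in \mathcal{C}$. When $\alpha = 1$ the term involving $f$ vanishes and $\phi(1,f) = \phi(1,0) = \hat{v}_{\delta_1} \in \mathcal{C}$, so I would assume $\alpha < 1$. I then define a new evaluation $\theta' \in \Delta_f(\NN^*)$ by $\theta'_1 = \alpha$ and $\theta'_{t+1} = (1-\alpha)\theta_t$ for $t \geq 1$. This $\theta'$ has finite support, total mass $\alpha + (1-\alpha) = 1$, first coordinate $\theta'_1 = \alpha$, and satisfies $(\theta')^+ = \theta$. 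Feeding $\theta'$ into the recursive formula of the previous Proposition yields, for every $z \in X$,
\[ w_{\theta'}(z) = \sup_{a \in A} \min_{b \in B}\left\{ \alpha\, G(z,a,b) + (1-\alpha)\, w_{\theta}(\ell(z,a)) \right\}. \]
Comparing this with the definition of $\phi(\alpha,f)$ in $(H5)$, the conclusion $\phi(\alpha,f) = w_{\theta'} = \hat{v}_{\theta'} \in \mathcal{C}$ (the last equality by Proposition \ref{val_eq}) will follow once the identity $\tilde{f}(\ell(z,a)) = w_{\theta}(\ell(z,a))$ is established.

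The one delicate point, which I expect to be the main obstacle, is precisely justifying $\tilde{f}(\ell(z,a)) = w_{\theta}(\ell(z,a))$ for $\ell(z,a) \in \Delta_f(X)$, since Proposition \ref{val_eq} is stated only for points of $X$ whereas the recursion evaluates the value function at a distribution over $X$. Here $\tilde{f}$ is the affine extension of $f = \hat{v}_\theta|_X$, so $\tilde{f}(\mu) = \int_X \hat{v}_\theta\, d\mu$ for $\mu \in \Delta_f(X)$, which by the linearity of $\hat{v}_\theta$ on $\Delta_f(X)$ proved in Lemma \ref{canlip} equals $\hat{v}_\theta(\mu)$. On the other side, $w_\theta$ is itself affine on $\Delta_f(X)$: in $\G_\theta(\eta)$ the initial law $\eta$ is common knowledge and the auxiliary state is observed, so the value is the $\eta$-average of $w_\theta$ over $X$. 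Thus $\tilde{f}$ and $w_\theta$ are both affine on $\Delta_f(X)$ and agree on the Dirac masses by Proposition \ref{val_eq}, hence agree on all of $\Delta_f(X)$, and in particular coincide at $\mu = \ell(z,a)$. This closes $\mathcal{C}$ under $(\alpha,f) \mapsto \phi(\alpha,f)$ and completes the verification of $(H5)$.
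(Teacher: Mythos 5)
Your proposal is correct and follows exactly the route the paper intends: take $\mathcal{C}=\{\hat{v}_\theta : \theta\in\Delta_f(\NN^*)\}$, get $1$-Lipschitzness from Lemma \ref{canlip}, and get closure under $\phi(\alpha,\cdot)$ from the recursive formula combined with Proposition \ref{val_eq}, via the evaluation $\theta'$ with $\theta'_1=\alpha$ and $(\theta')^+=\theta$. Your explicit justification that $\tilde{f}(\ell(z,a))=w_\theta(\ell(z,a))$ (both functions being affine on $\Delta_f(X)$ and agreeing on Dirac masses) is a detail the paper leaves implicit, and you handle it correctly.
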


\begin{proof}[Proof of Proposition \ref{val_eq}]
Notice first that $\hat{v}_1(z)=w_1(z)$ for all $z$. This comes indeed almost from the definition
\begin{align*}
\hat{v}_1(z) & = \sup_{\sigma_1:\De(K) \rightarrow \De(I)} \inf_{b\in \De(J)} \int_{\De(K)} g(p,\sigma_1(p),b) dz(p) \\
       &= \sup_{a \in A} \min_{b\in \De(J)} G(z,a,b) \\
       &= \min_{b\in \De(J)} \sup_{a \in A} G(z,a,b)\\
       &= w_1(z).
\end{align*}
It is enough to prove that $w$ and $\hat{v}$ satisfy the same recurrence formula. We will prove that $\hat{v}$ satisfies the recurrence formula in $\G$, i.e.
\begin{align*}
\hat{v}_{\theta}(z) &= \sup_{a\in A} \min_{b \in B}  \theta_1 G(z,a,b) + (1-\theta_1) \hat{v}_{\theta^+}(\ell(z,a)) \\
                 &= \min_{b\in B} \sup_{a \in A} \ \theta_1 G(z,a,b) + (1-\theta_1) \hat{v}_{\theta^+}(\ell(z,a)) .
\end{align*}
We prove the recursive formula by induction on the greatest element in the support of $\theta$. If $\theta=\de_1$, it follows from the preceding equality. Fix now $n \geq 2$, and assume that the proposition is true for every $\theta$ supported by $\{1,...,n-1\}$. Let $z\in \De_f(\De(K))$.
We first prove that player $1$ can defend in $\hat{\Gamma}_{\theta}(z)$ the quantity
\[\min_{b\in B} \sup_{a \in A} \left(\theta_1 G(z,a,b) + (1-\theta_1) \hat{v}_{\theta^+}(\ell(z,a)) \right).\]
Using the canonical representation $\widehat{\Gamma}$, $\hat{v}_{\theta}(z)=v_{\theta}(\pi)$ where $\pi \in \De_f(K \times \De(K) \times \De_f(\De(K)))$ is defined by
\[\forall (k,p,x)\in K \times \De(K) \times \De_f(\De(K)), \; \pi(k,p,x)=p(k)z(p)\ind_{x=z}.\]
Consider the game $\Ga_{\theta}(\pi)$. Let $\varepsilon >0$ and $\tau$ be a strategy of player $2$. Denoting by $b$ the law induced by $\tau_1$, let  $a^*\in A$ an action which realizes the supremum up to $\varepsilon$ in the expression
\[\theta_1 G(z,a,b) + (1-\theta_1) \hat{v}_{\theta^+}(\ell(z,a)).\]
Let $\sigma^*$ be an $\varepsilon$-optimal strategy in the game $\Gamma_{\theta^+}(Q(z,a^*,b))$. Define then $\sigma$ by $\sigma_1=a^*$ and for all $n\in \NN^*$, $h^I_n=(p,i_1,c_2,...,i_{n-1},c_{n})$, $\sigma_n(h^I_n)=\sigma^*_{n-1}(c',h^{1,+}_{n-1})$ where $c'=(p,i_1,c_2)$ and $h^{1,+}_{n-1}=(i_2,c_3,..i_{n-1},c_n)$. We have
\[\ga_{\theta}(\mu,\si,\tau)= \theta_1 G(z,a^*,b) + (1-\theta_1) \ga_{\theta^+}(Q(z,a^*,b),\sigma^*,\tau^+),\]
where $\tau^+$ is a continuation strategy. Precisely, for all $n \in \NN^*$, $\tau^+_{n-1}(d',h^{2,+}_{n-1})=\tau_{n}(h^{II}_n)$ with $h^{2,+}_{n-1}= (j_2,d_3,..,j_{n-1},d_n)$, $h^{II}_n=(d',h^{2,+}_{n-1})$ and $d'=(j_1,d_2)$ is the ``signal'' for player $2$ given by $Q(z,a^*,b)$.
\p
Therefore, $\sigma^*$ and $\tau^+$ can be seen as behavior strategies in a new game with initial signals corresponding to the past history in the original game and since $\sigma^*$ is $\varepsilon$-optimal in $\Gamma(Q(z,a^*,b))$, we have
\begin{align*}
\ga_{\theta}(\mu,\si,\tau) & \geq \theta_1 G(z,a^*,b) + (1-\theta_1) v_{\theta^+}(Q(z,a^*,b))-\varepsilon \\
                             & \geq \sup_{a\in A} \theta_1 G(z,a,b) + (1-\theta_1) v_{\theta^+}(Q(z,a,b)) -2\varepsilon \\
                             & = \sup_{a\in A} \theta_1 G(z,a,b) + (1-\theta_1) \hat{v}_{\theta^+}(\ell(z,a)) - 2 \varepsilon \\
                             & \geq \min_{b\in B} \sup_{a\in A} \theta_1 G(z,a,b) + (1-\theta_1) \hat{v}_{\theta^+}(\ell(z,a)) - 2 \varepsilon.
\end{align*}
It follows that $\hat{v}_{\theta}(z) \geq \min_{b\in B} \sup_{a\in A} \theta_1 G(z,a^*,b) + (1-\theta_1) \hat{v}_{\theta^+}(\ell(z,a))$ by sending $\varepsilon$ to zero.
\p
Let us show that player $2$ can defend $\sup_{a\in A} \min_{b\in B} (\theta_1 G(z,a,b)+ (1-\theta_1) \hat{v}_{\theta^+}(\ell(z,a))$ in $\Ga({\mu})$. Fix a strategy $\sigma$ of player $1$ and let $a=\sigma_1$, there exists $b^*\in B$ achieving $\min_b G(z,a,b)$. We also choose $\tau^*$ an optimal strategy for player $2$ in the game $\Ga_{\theta^+}(Q(z,a,b^*))$. This defines a strategy $\tau$ such that
\begin{align*}
\gamma_{\theta}^{\mu}(\sigma,\tau) & = \theta_1 G(z,a,b^*) + (1-\theta_1) \gamma_{\theta^+}^{Q(z,a,b^*)}(\sigma^+,\tau^*)\\
& \leq  \theta_1 G(z,a,b^*) + (1-\theta_1) v_{\theta^+}(Q(z,a,b^*)) \\
& =  \theta_1 G(z,a,b^*) + (1-\theta_1) \hat{v}_{\theta^+}(\ell(z,a)) \\
& = \min_{b\in B} \theta_1 G(z,a,b) + (1-\theta_1) \hat{v}_{\theta^+}(\ell(z,a)).
\end{align*}
Thus $\hat{v}_{\theta}(z) \leq \sup_{a\in A} \min_{b\in B} (\theta_1 G(z,a,b)+ (1-\theta_1) \hat{v}_{\theta^+}(\ell(z,a))$. Finally, since the maxmin is always smaller than the minmax, all the intermediate inequalities are equalities.
\end{proof}

\subsection{Existence of the uniform value}
Let us at first recall the first main result proved in \cite{R2012} which holds under our set of weakened assumptions.
\begin{theoreme}[Renault(2012)]
Assume that $H1,H'2,H'3,H'4,H5,H'6,H7$ hold. Then for every initial distribution $\eta \in \De_f(X)$, the game has a uniform value $w^*(\eta)$.
\noindent Moreover player $1$ can guarantee $w^*(\eta)$ with a Markov strategy:
\[\forall \epsilon>0,\ \exists \sigma \in \Sigma^M, \exists N_0 \in \NN, \ \forall N\geq N_0 \ \forall \tau' \in \Tau, \ \gamma_N(\eta,\sigma,\tau')\geq w^*(\eta)-\epsilon.\]
and we have $w^*(\eta)=\inf_{n\geq 1} \sup_{m\geq 0} w_{m,n}(\eta)$.
\end{theoreme}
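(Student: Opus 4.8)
The plan is to reproduce the scheme of Renault \cite{R2012}, the only genuinely new point being to verify that his argument never uses more than the weakened hypotheses $H1,H2',H3',H4',H5,H6',H7$ already checked above. The conceptual engine is $H1$: since $\ell(z,a)$ does not depend on player $2$'s action, player $2$ has no influence on the state process, so the dynamics of $\G$ are those of a one-controller problem on $X=\De_f(\De(K))$. In this problem player $1$ chooses $a\in A$, the state moves deterministically (as a point of $\De_f(X)$) to $\ell(z,a)$, and the stage reward, once player $2$ has optimized the current one-shot matrix game, is $\min_{b\in B}G(z,a,b)$. This is exactly the reduction that allows one to apply the uniform-value theorem for dynamic programming of Renault \cite{R2011}, whose output is the characterization $w^*(\eta)=\inf_{n\geq 1}\sup_{m\geq 0} w_{m,n}(\eta)$ quoted in the statement.

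First I would set up the finite-horizon value functions. By the preceding Proposition, for every $\theta\in\De_f(\NN^*)$ the game $\G_\theta(\eta)$ has a value $w_\theta(\eta)$ satisfying the Shapley equation, and since $w_\theta=\hat v_\theta$ is linear in $\eta$ (Proposition \ref{val_eq} and Lemma \ref{canlip}) it suffices to treat $\eta=\de_z$. Taking for $w_{m,n}$ the values associated with the evaluations that discard the first $m$ stages and average the next $n$, the $1$-Lipschitz regularity of $\hat v_\theta$ for the Wasserstein metric $\mathbf d$ (Lemma \ref{canlip}, i.e.\ $H5$) shows that $\{w_{m,n}\}$ is uniformly bounded and equi-Lipschitz; this equicontinuity is the compactness input needed to pass to limits. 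I would then set $w^*\triangleq\inf_n\sup_m w_{m,n}$ and establish the two inequalities separately.

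For the lower bound I would show player $1$ guarantees $w^*$ with a Markov strategy, which is the substantive half and is Renault's construction: because player $1$ controls the state and $\G$ has perfect monitoring (so the current state $z$ is observed), one fixes $n$ nearly realizing the infimum, plays an $\ep$-optimal Markov strategy of the relevant window game on a first block, then restarts from the state reached at the end of that block, and concatenates, using the splitting/concavity property $H7$ to keep the pasted strategy admissible and Markov. For the upper bound, player $2$ does \emph{not} affect the state, so he plays \emph{by blocks}: on each block he defends the value of a finite-window game whose length is dictated by the infimum defining $w^*$; the equicontinuity of the $w_{m,n}$ and the convexity of $b\mapsto G(z,a,b)$ ($H4'$, with Sion's equality) then average these per-block bounds into $\le w^*+\ep$ over any long horizon.

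\textbf{The main obstacle} I anticipate is not the dynamic-programming argument itself but verifying that it survives replacing compactness of $A$ by mere convexity. Sion's minimax theorem is invoked repeatedly to exchange $\sup_a$ and $\min_b$ in the Shapley operator and in the guaranteeing constructions; with $A$ only convex I must use its geometric form (Proposition A.8 in Sorin \cite{sorinbook}), which yields only $\ep$-optimal actions for player $1$ rather than optimal ones. The delicate step is therefore to make the block-concatenation for player $1$ robust to this loss, i.e.\ to accumulate the per-block errors into a total error that still vanishes uniformly in the horizon $N$; this I would handle by choosing the block errors summably small and controlling, via the uniform equicontinuity of the $w_{m,n}$, the value of the state reached at the end of each block.
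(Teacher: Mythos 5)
Your proposal is correct and takes essentially the same route as the paper, which in fact gives no proof of this statement at all: it simply cites Renault \cite{R2012} and justifies the weakened hypotheses by the single observation that the stronger ones were only used to invoke Sion's minimax theorem, which still applies in its geometric form (Proposition A.8 of \cite{sorinbook}) under $H2'$--$H6'$. Your identification of the Sion applications with non-compact $A$ as the only delicate point, and your handling of it via $\ep$-optimal actions with summable block errors, is precisely the content the authors compress into the sentence ``they can be replaced without altering the proof.''
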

In order to conclude the proof, we show that both players can guarantee
\[v^*(\pi)=\inf_{n\geq 1} \sup_{m \geq 0} v_{m,n}(\pi),\]
where $v_{m,n}(\pi)=v_{\theta_{m,n}}(\pi)$ and $\theta_{m,n}$ is the uniform law between stage $m$ and $m+n$.
\p
The game $\mathcal{G}(z)$ satisfies assumptions $H1,...,H7'$ so it has a uniform value given by
\[ w^*(z)=\inf_{n\geq 1} \sup_{m\geq 0} w_{m,n}(z).\]
And by proposition \ref{val_eq}, the value in $\G$ and in the reduced game are equal, so if $\pi \in \De^*_f(K \times \NN \times \NN)$ we have
\[v^*(\pi)=\inf_{n\geq 1} \sup_{m\geq 0} v_{m,n}(\pi) = \inf_{n\geq 1} \sup_{m\geq 0} \hat{v}_{m,n}(\Phi(\pi))= \inf_{n\geq 1} \sup_{m\geq 0} w_{m,n}(\Phi(\pi))=w^*(\Phi(\pi)),\]
Thus player $1$ can guarantee $v^*(\pi)$ in $\mathcal{G}(\Phi(\pi))$ with a Markov strategy. Let us check that he can guarantee $v^*(\pi)$ in the game $\widehat{\Gamma}(\Phi(\pi))$ or equivalently in $\Gamma(\pi)$.

\begin{proposition}\label{markovian}
Any Markovian strategy $\widehat{\sigma}$ of player $1$ in $\G_{\infty}(z)$ induces a strategy $\si$ in $\hat{\Gamma}_{\infty}(z)$  guaranteeing the same amount.
\end{proposition}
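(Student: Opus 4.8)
The plan is to read off from the Markov strategy $\widehat{\sigma}$ a belief-based strategy in $\widehat{\Gamma}_\infty(z)$ via Lemma \ref{markovstrat}, and then to compare the two games stage by stage, using assumption $(A1)$ to match the payoffs and assumption $(A3)$ to match the evolution of the state. A Markov strategy $\widehat{\sigma}=(\widehat{\sigma}_n)_n$ in $\G$ assigns to the current state $y_n\in X=\De_f(\De(K))$ an action $\widehat{\sigma}_n(y_n)\in A$, that is a map $\De(K)\to\De(I)$. Setting $\psi_n(x,y):=\widehat{\sigma}_n(y)(x)$, Lemma \ref{markovstrat} produces a single $\sigma\in\Sigma$ such that $\sigma(h_n^I)=\psi_n(x_n,y_n)$ holds $\PP^z_{\sigma\tau}$-almost surely for every $\tau$ and every $n$; this is the induced strategy, and player $1$ can genuinely play it since he computes $(x_n,y_n)$ from his own history by Lemmas \ref{eqbelief} and \ref{f2}.

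Next I would identify the one-stage payoff. Fix $\tau$ and write $a_n:=\widehat{\sigma}_n(y_n)\in A$ and $b_n:=\tau(h_n^{II})\in B$. Conditioning on $(h_n^I,h_n^{II})$, assumption $(A1)$ gives $\CL(k_n\mid h_n^I,h_n^{II})=x_n$, whence the conditional expected payoff equals $\sum_{i,j}a_n(x_n,i)\,b_n(j)\,g(x_n,i,j)$. Taking the conditional expectation given $h_n^{II}$ and using that $\CL(x_n\mid h_n^{II})=y_n$ with $y_n$ being $h_n^{II}$-measurable yields
\[ \EE^z_{\sigma\tau}\big[g(k_n,i_n,j_n)\mid h_n^{II}\big]=\sum_{p\in\mathrm{supp}(y_n)}y_n(p)\sum_{i,j}a_n(p,i)\,b_n(j)\,g(p,i,j)=G\big(y_n,a_n,b_n\big). \]
In parallel, since $\sigma$ is reduced and Markov in $(x_n,y_n)$, the recursive analysis behind Lemma \ref{markovstrat} together with the very definition of $\ell$ shows that under $\PP^z_{\sigma\tau}$ the process $(y_n)_n$ is a Markov chain started at $y_1=z$ whose transition kernel is exactly $\ell(\cdot,\widehat{\sigma}_n(\cdot))$; by assumption $(A3)$ (equivalently $H1$, the map $\ell$ being independent of player $2$'s action) this law does not depend on $\tau$ and coincides with the law of the state trajectory generated by $\widehat{\sigma}$ in $\G$.

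To conclude, I would exploit that the guarantee of $\widehat{\sigma}$ in $\G_\infty(z)$ holds against every strategy of player $2$, in particular against the Markov strategy that plays at each stage an action minimizing $b\mapsto G(y_n,a_n,b)$. For that strategy the payoff in $\G$ equals $\EE\big[\tfrac1N\sum_{n=1}^N\min_{b\in B}G(y_n,a_n,b)\big]$, so for the amount $w$ guaranteed by $\widehat{\sigma}$ and any $\ep>0$ there is $N_0$ with this quantity $\geq w-\ep$ for all $N\geq N_0$. Then, for an arbitrary $\tau$ in $\widehat{\Gamma}$, the payoff identity of the previous step, the pointwise bound $G(y_n,a_n,b_n)\geq\min_{b}G(y_n,a_n,b)$, and the coincidence of the laws of $(y_n)_n$ give
\[ \gamma_N(z,\sigma,\tau)=\EE^z_{\sigma\tau}\Big[\tfrac1N\sum_{n=1}^N G(y_n,a_n,b_n)\Big]\ \geq\ \EE\Big[\tfrac1N\sum_{n=1}^N\min_{b}G(y_n,a_n,b)\Big]\ \geq\ w-\ep \]
for all $N\geq N_0$ and all $\tau$, which is exactly the assertion that $\sigma$ guarantees $w$ in $\widehat{\Gamma}_\infty(z)$.

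I expect the main difficulty to lie in the interface between the two dynamics, namely in justifying that the belief process $(y_n)_n$ produced inside the original game is driven by the very kernel $\ell$ of $\G$ and is insensitive to $\tau$; this is precisely where $(A3)$ and the propagation established in Lemma \ref{markovstrat} are indispensable. A secondary subtlety is that player $2$ in $\widehat{\Gamma}$ conditions $b_n$ on his full private history $h_n^{II}$ rather than on $y_n$ alone, so that the play in $\widehat{\Gamma}$ need not correspond to a Markov play in $\G$; this is harmless because the uncontrolled transition lets the pointwise inequality $G\geq\min_{b}G$ absorb any such correlation, reducing the comparison to the stage-wise minimum.
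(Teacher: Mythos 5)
Your proof is correct, but it takes a genuinely different route from the paper's. Both arguments start identically: the induced $\sigma$ is built from Lemma \ref{markovstrat} so that $\sigma(h_n^I)=\widehat{\sigma}_n(y_n)(x_n)$, and the conditional stage payoff given $h_n^{II}$ is identified with $G(y_n,a_n,b_n)$ exactly as you do. The divergence is in how player $2$'s dependence on his full history $h_n^{II}$ (rather than on $y_n$ alone) is handled. The paper fixes a best reply $\tau$ to $\sigma$ in the finitely evaluated game and shows by \emph{backward induction} that replacing $\tau_m(h_m^{II})$ by the conditional expectation $\EE[\tau_m(h_m^{II})\mid y_m]$, stage by stage from the last one, changes neither the stage-$m$ payoff nor (by $(A3)$ and Lemma \ref{markovstrat}) the law of $(y_{m+1},\dots,y_n)$; hence a best reply exists among $y$-measurable strategies, which embed into $\G$. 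You instead bypass any modification of $\tau$ by the pointwise bound $G(y_n,a_n,b_n)\geq\min_b G(y_n,a_n,b)$, noting that the right-hand side is a function of $y_n$ alone whose expectation depends only on $\eta_n$, which is $\tau$-invariant; you then compare with the stage-minimizing Markov reply in $\G$. Your route is shorter and avoids the backward induction entirely, at the cost of proving slightly less (the paper's argument additionally exhibits a $y$-measurable best reply, which is of independent interest); both rest on the same two pillars, namely the payoff identification via $(A1)$ and the $\tau$-invariant propagation of the belief laws via $(A3)$ and Lemma \ref{markovstrat}. One small caveat: you assert that $(y_n)_n$ is a Markov chain with kernel $\ell(\cdot,\widehat{\sigma}_n(\cdot))$, which is a conditional statement stronger than what your final inequality uses; all you need is that the marginal laws satisfy $\eta_1=\delta_z$ and $\eta_{n+1}=\int \ell(z',\widehat{\sigma}_n(z'))\,d\eta_n(z')$ independently of $\tau$, and this weaker statement is exactly what the induction in Lemma \ref{markovstrat} delivers, so no gap results.
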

\begin{proof} Let $\widehat{\si}$ be a behavior strategy in $\G_{\infty}(z)$. Let us describe the strategy $\si$. Player $1$ plays at the first round in $\Gamma_{\infty}(z)$ the mixed action $\widehat{\si}_1(z)(p)$  where $p$ is his initial signal. Then, at round $n$, he plays the mixed action $\widehat{\si}_n(y_n)(x_n)$.
That this strategy is a well-defined strategy follows from Lemma \ref{markovstrat}.

It remains to prove that this strategy guarantees the same quantity as $\widehat{\si}$. Let us fix $n\in \NN^*$, we will prove that there exists a best reply $\widetilde{\tau}$ to $\si$ in $\widehat{\Gamma}_{n}(z)$ which can be seen as a strategy $\hat{\tau}$ in $\G_{\theta}(z)$ and such that
\[ \gamma_\theta(z,\si,\widetilde{\tau}) = \widehat{\gamma}_\theta(z,\widehat{\si},\widehat{\tau}).\]
We will proceed by backward induction. Let us fix a best reply $\tau$ to $\si$ in $\widehat{\Gamma}_{\theta}(z)$. We will construct a strategy $\widetilde{\tau}$ which depends at stage $m$ on $h^{II}_m$ only through $y_m$. Recall that $\si$ is fixed so that $y_m(h^{II}_m)$ can be computed by player $2$. At first let us replace $\tau_n$ by 
\[ \widetilde{\tau_n}(y_n) = \EE_{\PP^z_{\si\tau}}[ \tau(h^{II}_n) \mid y_n ]. \]
Note that this conditional expectation depends on the strategies $\si,\tau$ up to stage $n-1$. Let us prove that the payoff at the last stage $n$ is not modified.
\begin{align*}
\EE_{\PP^z_{\si\tau}}[ g(k_n,i_n,j_n)] &= \EE_{\PP^z_{\si\tau}}[\EE_{\PP^z_{\si\tau}}[ g(k_n,i_n,j_n) \mid h^I_n,h^{II}_n]] \\
&= \EE_{\PP^z_{\si\tau}}[\gamma_1(x_n,\si_n(y_n,x_n),\tau_n(h^{II}_n))] \\
&= \EE_{\PP^z_{\si\tau}}[\EE_{\PP^z_{\si\tau}}[ \gamma_1(x_n,\si_n(y_n,x_n),\tau_n(h^{II}_n)) \mid h^{II}_n]] \\
&= \EE_{\PP^z_{\si\tau}}[ \int \gamma_1(x,\si_n(y_n,x),\tau_n(h^{II}_n)) dy_n[x]] \\
&=\EE_{\PP^z_{\si\tau}}[\EE_{\PP^z_{\si\tau}}[\int \gamma_1(x,\si_n(y_n,x),\tau_n(h^{II}_n)) dy_n[x] \mid y_n]] \\
&= \EE_{\PP^z_{\si\tau}}[ \int \gamma_1(x,\si_n(y_n,x),\EE_{\PP^z_{\si\tau}}[\tau_n(h^{II}_n)\mid y_n]) dy_n[x] ]\\
&= \EE_{\PP^z_{\si\tau}}[ \int \gamma_1(x,\si_n(y_n,x),\widetilde{\tau}_n(y_n)) dy_n[x] ]\\
&= \EE_{\PP^z_{\si,(\tau_1,...,\tau_{n-1},\widetilde{\tau}_n)}}[ g(k_n,i_n,j_n)].
\end{align*}
The above equations show that the expected payoff at stage $n$ when player $2$ is playing the best reply  $(\tau_1,...,\tau_{n-1},\widetilde{\tau}_n)$ against $\si$ is a function of $\si$ and of the law of $y_n$. Assume now that at step $m$, we have proved that there exists a best reply to $\si$ of player $2$ such that the sum of expected payoffs for the stages $m+1,...,n$ is a function of $\si$ and of the law of $(y_{m+1},..,y_n)$ only. We can replace $\tau_m(h^{II}_m)$ by $\widetilde{\tau_m}(y_m) = \EE_{\PP^z_{\si\tau}}[\tau_m(h^{II}_m)\mid y_m ]$ without modifying the expected payoff of stage $m$ with the same argument as above. Using assumption $(A3)$, Lemma \ref{markovstrat} and the definition of $\sigma$, the law of $(y_{m+1},...y_n)$ is not modified by this operation which proves that this modified strategy is still a best reply to $\si$.
\end{proof}

Secondly, we prove that Player $2$ can guarantee $v^*(\pi)$ by splitting the stage in blocks and playing on each block separately since he has no influence on the transition. The following results are quite similar to the corresponding ones proved in Renault \cite{R2012} and are reproduced here since their proofs are very short.
\begin{lemme}
For every $\pi \in \De^*_f(K\times C'\times D')$, $n\geq 1$ and $m\geq 1$, $\forall \ \tau_1,...,\tau_m$, $\exists \, \tau_{m+1},...,\tau_{m+n}$ such that the  strategy $\tau_1,...,\tau_m,...,\tau_{m+n}$ of player $2$ is optimal in the game $\Gamma_{m,n}$.
\end{lemme}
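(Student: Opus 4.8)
The plan is to exploit that the payoff of $\Gamma_{m,n}$ ignores the first $m$ stages, together with the fact that player $2$ does not influence the transition, so that his moves on those stages are irrelevant both for the payoff and for the law of the auxiliary state reached at stage $m+1$.

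First I would record a value identity coming from the recursive structure. By Lemma \ref{projection} and Proposition \ref{val_eq} we have $v_{m,n}(\pi)=w_{m,n}(\Phi(\pi))$, the value of the auxiliary game $\G$ for the evaluation $\theta_{m,n}$. Since $\theta_{m,n}$ assigns weight $0$ to each of the first $m$ stages and since, by $(H1)$, the transition $\ell$ of $\G$ does not depend on player $2$'s action, applying $m$ times the recursive formula satisfied by $w_{\theta}$ makes the payoff term and the $\min_{b}$ disappear on those stages and yields
\[ v_{m,n}(\pi)=\sup_{\si} \hat{v}_{n}(\eta_{m+1}), \]
where the supremum runs over (reduced) strategies of player $1$ on the first $m$ stages and $\eta_{m+1}$ is the resulting law of player $2$'s second order belief $y_{m+1}$. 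That player $1$ may be taken reduced, and that player $2$ may correspondingly be restricted to strategies measurable with respect to the auxiliary variables while still guaranteeing against every $\si\in\Sigma$, is the content of Lemma \ref{projection} together with its stagewise extension discussed in the Remark following $(A3)$ and in Lemma \ref{markovstrat}.

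The decisive ingredient is then assumption $(A3)$, in the form extended to stage $m+1$ by the induction of Lemma \ref{markovstrat}: for a fixed reduced strategy of player $1$, the distribution $\eta_{m+1}$ depends only on that strategy and not on $\tau_1,\dots,\tau_m$. Hence prescribing the first $m$ moves of player $2$ alters neither the (null) payoff of those stages nor the law of the state at which the genuinely relevant $n$-stage game begins. I would then pick an optimal strategy $\tau^{*}=(\tau^{*}_1,\dots,\tau^{*}_{m+n})$ for player $2$ in $\Gamma_{m,n}(\pi)$, which exists since it is a finite-horizon game with finite action and signal sets, and set $\tilde{\tau}=(\tau_1,\dots,\tau_m,\tau^{*}_{m+1},\dots,\tau^{*}_{m+n})$, keeping the continuation maps of $\tau^{*}$ unchanged as functions of player $2$'s private histories. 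Conditioning at stage $m+1$ and invoking the projection lemma, the continuation payoff is governed by $\eta_{m+1}$ alone; since $\eta_{m+1}$ is unchanged by the substitution and $\tau^{*}$ guarantees the continuation value, one obtains $\gamma_{m,n}(\pi,\si,\tilde{\tau})\le \hat{v}_n(\eta_{m+1})\le v_{m,n}(\pi)$ for every $\si$, so that $\tilde{\tau}$ is optimal.

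The hard part is precisely this substitution step. The subtlety is that player $2$'s signals, and hence the private histories on which $\tau^{*}_{m+1},\dots,\tau^{*}_{m+n}$ are evaluated, genuinely depend on his past actions through $q$, so for a fixed $\si$ the payoff is \emph{not} invariant under changing the first $m$ moves; what rescues the argument is that only the law $\eta_{m+1}$ of the second order belief enters the continuation value, and this law is invariant under $\tau_1,\dots,\tau_m$ by $(A3)$. Making this rigorous requires the reduction of player $1$ to reduced strategies, so that $\eta_{m+1}$ is well defined and $\tau$-independent (excluding the spurious correlations of Example \ref{reducednecessity}), together with the stagewise propagation of $(A2b)$ and $(A3)$; the remaining points — the measurable consistency of the spliced behavior strategy and the bookkeeping of the shifted evaluation across the $m$ payoff-free stages — are routine.
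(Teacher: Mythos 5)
Your diagnosis of \emph{why} the lemma is true is the right one: the first $m$ stages carry no payoff weight and, by $(A3)$ propagated via Lemma \ref{markovstrat}, they do not affect the law $\eta_{m+1}$ that governs the continuation value, so $v_{m,n}(\pi)$ is insensitive to $\tau_1,\dots,\tau_m$. The gap is in your construction of the witness. You take a \emph{fixed} optimal $\tau^*$ of $\Gamma_{m,n}(\pi)$ and splice its continuation maps $\tau^*_{m+1},\dots,\tau^*_{m+n}$, as functions of private histories, onto the prescribed prefix. But optimality of $\tau^*$ constrains those maps only on histories that occur with positive probability under $\tau^*$ itself; a history $h^{II}_{m+1}$ whose action coordinates $j_1,\dots,j_m$ are produced by $\tau_1,\dots,\tau_m$ but not by $\tau^*_1,\dots,\tau^*_m$ is never reached under $\tau^*$ (and player $2$'s signals depend on his own actions through $q$), so on such histories the continuation of $\tau^*$ is unconstrained and can be arbitrarily bad. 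The invariance of $\eta_{m+1}$ shows that the continuation \emph{value} is unchanged; it does not show that a fixed continuation strategy, transported to a different joint law of $(k_{m+1},h^I_{m+1},h^{II}_{m+1})$ having the same image under $\Phi$, still guarantees that value --- Lemma \ref{projection} is a statement about values, not about individual strategies, and this is exactly the point your ``what rescues the argument'' sentence glosses over.

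The paper avoids splicing altogether. It considers the restricted game with strategy sets $\Sigma$ and $\T^*$, where $\T^*$ is the set of strategies of player $2$ that begin with the prescribed $\tau_1,\dots,\tau_m$. This is the mixed extension of a finite game, so it has a value $v^*_{m,n}(\pi)\geq v_{m,n}(\pi)$ and an optimal strategy. The per-$\si$ defense you describe (play the prescribed prefix, then a best reply in the continuation game, whose value is $\hat{v}_n(\eta_{m+1})\leq v_{m,n}(\pi)$ by $(A3)$ and the recursive formula of Proposition \ref{val_eq}) yields $v^*_{m,n}(\pi)\leq v_{m,n}(\pi)$, hence equality, and then \emph{any} optimal strategy of the restricted game is the desired $\tau_1,\dots,\tau_{m+n}$. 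In other words, the single continuation that works against every $\si$ is produced by the minimax theorem applied to the restricted finite game, not by transporting an optimal strategy of the unrestricted one; your argument needs to be reorganized around that point (or around a genuinely history-independent, $y$-Markovian continuation in the spirit of Proposition \ref{markovian}, which is not what the splicing as written provides).
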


\begin{proof}
Let $\pi \in \De(K\times C'\times D')$, $n\geq 1$ and $m\geq 0$, and $\tau_1,...,\tau_m$ such that $\tau_i:D' \times ( J \times D)^{i-1} \rightarrow \Delta(J)$. We define $\T^*$ the subset strategies of player $2$ which start with $\tau_1,...,\tau_m$ and we consider the game with the evaluation $\theta_{m,n}$ and the set of strategies $\Sigma$ and $\T^*$. It can be seen as the mixed extension of a finite game, thus the value exists and will be denoted $v^*_{m,n}(\pi)$. Since the set of strategies of player $2$ is smaller than $\T$, we have $v_{m,n}(\pi) \leq v^*_{m,n}(\pi)$. But using the same method as for proving the recursive formula of Proposition \ref{val_eq}, for any $\sigma$, we can build a strategy which defends $v_{m,n}(\pi)$. Both values are therefore equal and any optimal strategy in the restricted game satisfies the conclusion of the lemma.
\end{proof}

\begin{proposition}
For every $\pi \in \De^*_f(K\times C'\times D')$, player $2$ can guarantee $v^*(\pi)$ in the game $\Gamma_{\infty}(\pi)$.
\end{proposition}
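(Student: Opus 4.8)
The plan is to exploit assumption $(A3)$ — the fact that player $2$ has no influence on the law of the state and beliefs — through a play-by-blocks strategy, gluing together the block-optimal strategies furnished by the preceding Lemma. First I would fix $\epsilon>0$ and use the very definition $v^*(\pi)=\inf_{n\geq 1}\sup_{m\geq 0}v_{m,n}(\pi)$ to pick an integer $n_0\geq 1$ with
\[ \sup_{m\geq 0} v_{m,n_0}(\pi)\leq v^*(\pi)+\epsilon. \]
Player $2$'s strategy $\tau$ will then be designed so that, on each successive block of $n_0$ stages, it replays an optimal strategy of the corresponding $n_0$-stage game based on the \emph{same} initial distribution $\pi$, irrespective of what happened in earlier blocks.

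Concretely, I would construct $\tau$ by induction over the consecutive blocks $B_\ell=\{(\ell-1)n_0+1,\dots,\ell n_0\}$. Having already defined $\tau_1,\dots,\tau_{(\ell-1)n_0}$ (the play on the first $\ell-1$ blocks), I apply the preceding Lemma with $m=(\ell-1)n_0$ and $n=n_0$ to extend $\tau$ over block $B_\ell$ so that the resulting finite strategy is optimal in $\Gamma_{(\ell-1)n_0,n_0}(\pi)$. This yields a single well-defined behavior strategy $\tau\in\T$. Since the evaluation $\theta_{(\ell-1)n_0,n_0}$ only charges stages inside $B_\ell$, the optimality attained on block $B_\ell$ depends on $\tau$ only through $\tau_1,\dots,\tau_{\ell n_0}$, hence is preserved once we extend $\tau$ to later blocks.

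The optimality of $\tau$ on each block gives, for every $\sigma\in\Sigma$ and every $\ell\geq 1$,
\[ \EE^\pi_{\sigma\tau}\Big[\tfrac{1}{n_0}\sum_{m\in B_\ell} g(k_m,i_m,j_m)\Big]=\gamma_{\theta_{(\ell-1)n_0,n_0}}(\pi,\sigma,\tau)\leq v_{(\ell-1)n_0,n_0}(\pi)\leq v^*(\pi)+\epsilon. \]
Summing these block bounds against a single fixed $\sigma$, I obtain for $n=Ln_0$
\[ \gamma_n(\pi,\sigma,\tau)=\frac{1}{L}\sum_{\ell=1}^{L}\EE^\pi_{\sigma\tau}\Big[\tfrac{1}{n_0}\sum_{m\in B_\ell} g(k_m,i_m,j_m)\Big]\leq v^*(\pi)+\epsilon, \]
while for general $n$ the at most $n_0$ trailing stages contribute at most $n_0/n$ to the Cesàro average (payoffs lie in $[0,1]$), so that $\gamma_n(\pi,\sigma,\tau)\leq v^*(\pi)+\epsilon+n_0/n$. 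Choosing $N$ with $n_0/N\leq\epsilon$ shows that $\tau$ guarantees $v^*(\pi)+2\epsilon$ for all $n\geq N$ and all $\sigma$; letting $\epsilon\to 0$ yields the claim.

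The conceptual crux, and the only genuinely delicate point, is the legitimacy of the block decomposition: that player $2$ may re-optimize at the start of each block using the \emph{same} initial distribution $\pi$ regardless of his own past play. This is precisely what the preceding Lemma delivers, and it ultimately rests on assumption $(A3)$, which ensures that player $2$'s earlier actions do not affect the law of the relevant state and belief variables during block $B_\ell$. Everything else is routine bookkeeping: the fact that the per-block bounds hold against a common $\sigma$, the averaging over blocks, and the truncation estimate when $n$ is not a multiple of $n_0$.
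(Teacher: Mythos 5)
Your proof is correct and follows essentially the same route as the paper: both construct player $2$'s strategy block by block via the preceding Lemma, bound each block's contribution by $v_{m,n}(\pi)\leq\sup_{m\geq 0}v_{m,n}(\pi)$ against a common $\sigma$, handle the trailing stages using boundedness of the payoff, and pass to the infimum over the block length. The only difference is presentational (you fix $n_0$ from the infimum up front rather than first guaranteeing $\sup_m v_{m,n}(\pi)$ for each $n$), which changes nothing of substance.
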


\begin{proof}
We prove that for all $n \in \NN$, Player $2$ can guarantee the payoff $\sup_{m\geq0} v_{m,n}(p)$. Let $n\in \NN$ be a number of stages, then for each $L\in \NN$ we split the game of length $nL$ in $L$ blocks of length $n$: $B_1,...,B_L.$  We define the strategy $\tau^*$ by induction on the block.

Let $\tau$ be an optimal strategy in $\Gamma_{1,n}(\pi)$ then we set $\tau^*_i=\tau_i$ for all $i\in \{1,..,n\}$. Once we have constructed $\tau^*_1,...,\tau^*_{nl}$ for some $1 \leq l \leq L-1$, we define the game $\Gamma^{\#}_{nL+1,n}(\pi)$ where the player $2$ has to play $\tau^*_i$ for all $i\leq nl$. We have $v^{\#}_{nL+1,n}(\pi) = v_{nL+1,n}(\pi)$ using the preceding Lemma.  Let $\tau$ be an optimal strategy in $\Gamma^{\#}_{nL+1,n}(\pi)$ and set $\tau^*_i=\tau_i$ for all $i\in \{nL+1,...,(n+1)L\}$.
We have
\begin{align*}
\gamma_{Ln}(\sigma,\tau^*) &=\frac{1}{nL} \EE^{\pi}_{\sigma \tau^*} \left( \sum_{m=0}^{Ln} g(k_m,i_m,j_m) \right) =\frac{1}{nL} \sum_{d=0}^{L-1} \EE^{\pi}_{\sigma \tau^*} \left (\sum_{m=dn+1}^{(d+1)n} g(k_m,i_m,j_m) \right) \\
                           &\leq \frac{1}{L} \sum_{d=0}^{L-1} v_{dn+1,n}(\pi) \leq \frac{1}{L} \sum_{d=0}^{L-1} \sup_{m\geq 0} v_{m,n}(\pi) \\
                           &\leq \sup_{m\geq 0} v_{m,n}(\pi). \\
\end{align*}
The payoff being bounded, we deduce that this strategy guarantees $v_{m,n}(\pi)$. Finally, Player $2$ can guarantee the minimum on $n\in \NN$, $\inf_{n\in \NN} \sup_{m\geq 0} v_{m,n}(\pi)=v^*(\pi).$
\end{proof}

Since each player can guarantee $v^*(\pi)$, the game has a uniform value given by $v^*(\pi)$ which concludes the proof of Theorem \ref{main1}.

\end{document}